\newenvironment{red}{\relax\color{red}}{\relax}
\newenvironment{blue}{\relax\color{blue}}{\hspace*{.5ex}\relax}
\newcommand{\ber}{\begin{red}}
\newcommand{\er}{\end{red}}
\newcommand{\beb}{\begin{blue}}
\newcommand{\eb}{\end{blue}}
\def\A{\mathcal{A}}
\def\C{\mathcal{C}}
\def\H{\mathcal{H}}
\def\P{\mathcal{P}}
\def\Q{\mathcal{Q}}
\def\R{\mathcal{R}}
\def\X{\mathcal{X}}
\def\Z{\mathcal{Z}}
\def\CC{\mathbb{C}}
\def\ZZ{\mathbb{Z}}
\newcommand{\QQ}{\mathbb{Q}}
\def\L{\mathbb{L}}
\def\N{\mathbb{N}}
\def\F{\mathbb{F}_q}
\def\U{\mathbf{U}}
\def\m{\mathbf{m}}
\def\g{\mathfrak{g}}
\def\e{\mathrm{e}}
\def\h{\mathrm{h}}
\def\Zt{{\mathbb{Z}_2}}
\newcommand{\cplx}[4]{\begin{tikzcd}[cramped, column sep=scriptsize] 
#1 \ar[r, "#2", shift left] \pgfmatrixnextcell  #4 \ar[l,  "#3", shift left] 
\end{tikzcd}}
\newcommand{\Hom}{\textup{Hom}}
\newcommand{\Ext}{\textup{Ext}}
\newcommand{\End}{\textup{End}}
\newcommand{\ostar}{\circledast}
\newcommand{\Ho}{\operatorname{Ho}}
\newcommand{\Ro}{\operatorname{Rt}}
\newcommand{\D}{\operatorname{D}}
\newcommand{\lra}{\longrightarrow}
\newcommand{\blob}{{\scriptscriptstyle\bullet}}
\newcommand{\isom}{\cong}
\renewcommand{\ker}{\operatorname{ker}}
\newcommand{\im}{\operatorname{im}}
\newcommand{\cok}{\operatorname{coker}}
\newcommand {\Aut}{\operatorname{Aut}}
\newcommand {\<}{\langle}
\renewcommand {\>}{\rangle}
\renewcommand{\dag}{*}
\newcommand{\fin}{\mathsf{fin}}
\newcommand{\rd}{\mathsf{red}}
\renewcommand{\DH}{\operatorname{\mathcal{DH}}}
\newcommand{\diam}{\diamond}
\renewcommand{\k}{\Bbbk}
\newcommand{\bc}{\boldsymbol{k}}
\newcommand{\cl}[1]{{\hat #1}}
\newcommand{\gen}[1]{[#1]}
\newcommand{\lRa}[1]{\stackrel{#1}{\lra}}
\newcommand{\isoto}{{\overset{\sim}{\rightarrow}}}
\newcommand {\id}{\operatorname{id}}
\newcommand{\longinto}{\lhook\joinrel\longrightarrow}
\newcommand{\Iso}{\operatorname{Iso}}
\newcommand{\tensor}{\otimes}
\theoremstyle{plain}
\newtheorem{thm}{Theorem}[section]
\newtheorem{lem}[thm]{Lemma}
\newtheorem{prop}[thm]{Proposition}
\newtheorem{cor}[thm]{Corollary}
\theoremstyle{definition}
\newtheorem{definition}[thm]{Definition}
\newtheorem{example}[thm]{Example}
\newtheorem{remark}[thm]{Remark}
\numberwithin{equation}{section} 
\numberwithin{figure}{section}
\numberwithin{table}{section}
\begin{document}
\title[Quantum GKM algebras via Hall algebras of complexes]{Quantum generalized Kac--Moody algebras\\ via Hall algebras of complexes} 

\author[J. D. Axtell]{Jonathan D. Axtell$^{\dagger}$}
\thanks{$^{\dagger}$This paper was supported by the National Research Foundation of Korea (NRF) funded by the Ministry of Science (NRF-2017R1C1B5018384).} 
\address{Sungkyunkwan University, Suwon 16419, Republic of Korea} 
\email{jaxtell@skku.edu}

\author[K.-H. Lee]{Kyu-Hwan Lee$^{\ast}$}
\thanks{$^{\ast}$This work was partially supported by a grant from the Simons Foundation (\#318706).}
\address{Department of Mathematics, University of Connecticut, Storrs, CT 06269, U.S.A.}
\email{khlee@math.uconn.edu}


\begin{abstract}
We establish an embedding of the quantum enveloping algebra of a symmetric generalized Kac--Moody algebra into a localized Hall algebra of  $\mathbb Z_2$-graded complexes of
representations of a quiver with (possible) loops.  
To overcome difficulties resulting from the existence of infinite dimensional projective objects, we consider the category of finitely-presented representations and
the  category of $\mathbb Z_2$-graded complexes of projectives with finite homology.
\end{abstract}

\maketitle

\section{Introduction} \label{sec-int}

Let $\A$ be 
an abelian category 
such that the sets $\Hom(A,B)$ and $\Ext^1(A,B)$ are both finite for all $A,B\in \A$. 
The {\em Hall algebra} of $\A$  is defined to be the $\CC$-vector space with 
basis elements indexed by isomorphism classes in $\A$ 
and with associative multiplication which encodes information about extensions of objects. 
Typical examples of such
abelian categories arise as the category $\mathrm{rep}_\k(\mathcal Q)$ of finite-dimensional representations of an acyclic  quiver $\mathcal Q$ over a finite field $\k:=\mathbb F_q$. This category became a focal point of intensive research when C. Ringel \cite{Ri} realized one half of a quantum group via a twisted Hall algebra  of the category. This twisted Hall algebra is usually called the {\em Ringel--Hall} algebra. The construction was further generalized by J.~A. Green \cite{Gr} to one half of the quantum group of an arbitrary Kac--Moody algebra. 

Even though there is a construction, called {\em Drinfeld double}, which glues together two copies of one-half quantum group to obtain the whole quantum group, it is desirable to have an explicit realization of the whole quantum group in terms of a Hall algebra. 
Among various attempts, the idea of using a category of $\mathbb Z_2$-graded complexes was suggested by the works of M. Kapranov \cite{Kapra}, L. Peng and J. Xiao \cite{PX,PX1}. In his seminal work \cite{Br}, T. Bridgeland successfully utilized this idea to achieve a Hall algebra realization of  the whole quantum group. More precisely, given a Kac--Moody algebra $\mathfrak g$, he took the category $\mathrm{rep}_\k(\mathcal Q)$  of finite dimensional representations of an acyclic  quiver $\mathcal Q$ associated with $\mathfrak g$, and considered the full subcategory $\P$ of projective objects in $\mathrm{rep}_\k(\mathcal Q)$. By studying the category $\mathcal C(\P)$ of $\mathbb Z_2$-graded complexes in $\P$, he showed that the whole quantum group is embedded into the reduced localization of a twisted Hall algebra of $\mathcal C(\P)$.

The purpose of this paper is to extend Bridgeland's construction to {\em generalized Kac--Moody algebras}. These algebras were introduced by R. Borcherds  \cite{Bor} around 1988. He used a  generalized Kac--Moody algebra, called the {\em Monster Lie algebra},  to prove the celebrated Moonshine Conjecture \cite{Bor-1}. Since then, many of the constructions in the theory of Kac--Moody algebras have been extended to generalized Kac--Moody algebras. In particular, the quantum group of a generalized Kac--Moody algebra was defined by Kang
\cite{Ka95}, and one half of the quantum group was realized via a Hall algebra by Kang and Schiffmann \cite{KS}, following Ringel--Green's construction.  

The main difference from the usual Kac--Moody case is that the quiver $\mathcal Q$ may have loops in order to account for imaginary simple roots. 
A natural question arises:
\begin{quote}
{\em Is it possible to realize the whole quantum group of a generalized Kac--Moody algebra  in terms of a Hall algebra of $\ZZ_2$-graded complexes?} 
\end{quote}
A straightforward approach would run into an obstacle. Namely, when there is a loop, a projective object may well be infinite dimensional, and the Hall product would not be defined.

In this paper, we show that this difficulty can be overcome by considering the category $\R$ 
of finitely-presented representations 
of a locally finite quiver $\Q$, possibly with loops,  
and the category $\C_\fin(\P)$ 
 of $\mathbb Z_2$-graded complexes in the category of projectives $\P \subset \R$  with finite homology.
In contrast to Bridgeland's construction, however, it is not clear that the 
corresponding product in the Hall algebra $\H(\C_\fin(\P))$
 is associative.  Therefore the proof of associativity for the (localized) Hall algebra  is one of the main results of this paper.

Following the approach of \cite{Br}, we work in the more general setting of a category $\R$ satisfying certain natural assumptions listed in  the  next subsection (Section \ref{ass}) which we keep throughout Sections \ref{sec-q-h}-\ref{sec:dbl}. 
The main theorem (Theorem \ref{thm:Drinf}) for this general setting states that a certain localization $\DH(\R)$ 
of the  Hall algebra  $\H(\C_\fin(\P))$ 
is isomorphic to the Drinfeld double of the (extended) Hall algebra of $\R$,  generalizing a result of Yanagida \cite{Ya}. 
As a corollary (Corollary \ref{cor-associative}), the localized Hall algebra  is shown to be an associative algebra.

In Section \ref{sec-q-group}, we show that the category $\R$ of finitely-presented representations of a locally finite quiver satisfies all the assumptions in Section \ref{ass} under some minor restrictions on the quiver. As a consequence of the results of Section \ref{sec:dbl} in the general setting, we obtain the main result for the quantum group (Theorem \ref{thm-main-q-group}) which establishes an embedding 
\[\Xi \colon \U_v \longinto\DH_{\rd}(\R)\]
of the whole quantum group $\U_v=\U_v(\g)$ of a generalized Kac--Moody algebra $\g$ into a reduced version of the localized Hall algebra $\DH(\R)$. 

\subsection{Assumptions}
\label{ass}
Given an abelian category $\C$, let $K(\C)$ denote its Grothendieck group  
and write $\bc_\C(X) \in K(\C)$ to denote the class of an object $X\in \C$. 

Throughout this paper $\R$ is an  
abelian category. 
Let $\P\subset \R$ denote the 
full subcategory of projectives  
and $\A \subset \R$ the full subcategory 
of objects $A\in \R$ such that $\Hom_\R(M,A)$ is a finite set for any $M\in \R$. 
There are several conditions that we will impose on 
the triple $(\R, \P, \A)$. 
Precisely, we shall always assume that 
\begin{itemize}
\item[(a)] $\R$ is essentially small and linear over  $\k=\F$,
\item[(b)] $\R$ is hereditary, that is of global dimension at most 1, and has enough projectives, 
\item[(c)] for any objects $P,Q,M\in \P$, the relation $M\oplus P \cong M\oplus Q$ implies $P\cong Q$, 
\item[(d)] every element in $K(\R)$ is a $\QQ$-linear combination of elements in $\{\bc_\R(A)\mid A\in \A\}$, 
\item[(e)] the identity $\bc_\R(A)=\bc_\R(B)$ implies  $| \Hom(P,A)|=|\Hom(P,B)|$ for all $P \in \P$, $A, B \in \A$. 
\end{itemize}

It is clear that the category $\A$ is Hom-finite, 
that is $\Hom_\R(A,B)$ is a finite set for all $A,B\in\A$.  
Since $\R$ has enough projectives by (b), 
it follows that the subcategory $\A$ is abelian 
and hence a Krull--Schmidt category by \cite{Kr}.
It is also easy to check that $\A$ is closed under extensions in $\R$. 
We note, however, that the category $\R$ is not necessarily Krull-Schmidt in general. 

The condition (c) is required in the proof of Proposition \ref{free} to show that 
the localized Hall algebra $\DH(\R)$ is a free module over the 
group algebra $\CC[K(\R)\times K(\R)]$. 
Conditions (d) and (e) are needed in Section \ref{ss:Euler} 
to ensure that the Euler form on $\A$ 
can be lifted to a bilinear form 
on the whole category $\R$, albeit with values in $\QQ$. 

The class of categories $\R$ 
satisfying the above conditions (a)-(e) generalizes the class of  
categories $\A$ satisfying Bridgeland's conditions, also denoted (a)-(e) in \cite{Br}, 
 although our conditions do not correspond precisely. 
In particular, if the category $\R$ is Hom-finite then $\A=\R$ so that condition (d) is superfluous, 
and it is also clear that (c) holds since $\R$ is Krull-Schmidt in this case. 
One may check using Proposition \ref{prop-nonzero}
 that the remaining conditions (a), (b), (e) hold 
in case $\A=\R$ if and only if the conditions (a)-(e) in \cite{Br} hold for $\A$.  

\subsection{Notation} \label{sec-notations}
Assume throughout that $\k=\F$ is a finite field ($q \ge 2$) with $q$ elements.  Let $v \in \mathbb R_{>0}$ be such that $v^2 =q$. We denote by $v^{1/n}$ the positive real $2n$-th root of $q$ for $n \in \ZZ_{\ge 1}$, and write $q^{1/n}=v^{2/n}$. We also write $\ZZ_2=\ZZ/2 \ZZ$.

\section{Hall algebras} \label{sec-q-h}

We assume that  the triple $(\R,\P,\A)$ 
satisfies axioms (a)-(e) in Section \ref{ass}. 

\subsection{Hall algebras}
Given a small category $\C$, denote by  $\Iso(\C)$ the set of isomorphism classes in $\C$. 
Suppose that $\C$ is abelian. Given objects $X,Y,Z\in \C$, define $\Ext^1_\C(X,Y)_Z$ to be the set of 
(equivalence classes of) extensions with middle term isomorphic to $Z$ in $\Ext^1_\C(X,Y)$. 

Since $\R$ has enough projectives, it follows from the definition of $\A$ 
that the set $\Ext^1_\A(A,B)$ is finite for all $A, B \in \A$.
The {\em Hall algebra} $\H(\A)$ 
is defined to be the $\CC$-vector space with basis 
indexed by elements $A\in\Iso(\A)$, 
and with associative multiplication defined by 
\begin{equation}
\label{defprod}
\gen{A}\diam \gen{B}= 
\sum_{C \in\Iso(\A)} \frac{|\Ext^1_\A(A,B)_{C}|}{|\Hom_\A(A,B)|} \ \gen{C}.\end{equation}
The unit is given by $\gen{0}$, where $0$ is the zero object in $\A$. 

Recall from \cite{Br} that the multiplication \eqref{defprod}  is a variant of the usual Hall product 
(see e.g.~\cite{Ri}) defined as follows. 
Given objects $A,B,C\in \A$, define the number
\begin{equation} \label{eqn-gABC}
g_{A,B}^C=\big|\big\{ B'\subset C :  B'\isom B,\ C/B'\isom A\big\}\big|.\end{equation}
Writing $a_A = |\Aut_\A(A)|$ to denote the cardinality of the automorphism group of an object $A$, 
recall that 
\[g_{A,B}^C=\frac{|\Ext^1_\A(A,B)_{C}|}{|\Hom_\A(A,B)|}\cdot\frac{a_C}{a_A \, a_B}.\]
Hence using  
$[[A]]=\gen{A}\cdot a_A^{-1}$ 
as alternative generators,  
the product takes the form
\[[[A]]\diam [[B]]= 
\sum_{C\in\Iso(\A)} g_{A,B}^C\cdot [[C]].\]
The associativity of multiplication 
in $\H(\A)$
then reduces to the equality
\begin{equation} \label{eq:assoc}
\sum_{C_1} g_{A,B}^{C_1}\,  g^{D}_{C_1,C}
\ =\ 
\sum_{D_1} g_{A,D_1}^{D}\,  g^{D_1}_{B,C}
\end{equation}
which holds for any $A,B,C,D\in \Iso(\A)$.

\subsection{Twisted and Extended Hall algebras}\label{ss:Extend}
 
The Euler form on  $\A$ is a bilinear mapping 
\[
\<-,-\> : \A \times \A \to \ZZ
\]
defined by 
\begin{equation}
\label{Euler}
 \< A,B\> := \dim_{\k}\Hom(A,B) - \dim_{\k}\Ext^1(A,B) 
 \end{equation} 
for all $A,B\in \A$. 
This form factors through the Grothendieck group $K(\A)$ (see Lemma \ref{lem-gr-well}). 
We also  introduce the associated symmetric form
$(A,B)=\langle A,B \rangle + \langle B,A\rangle.$

We define the {\em twisted Hall algebra} $\H_v(\A)$ to be the same as the Hall algebra of $\A$ with  a new multiplication given by
\begin{equation} \label{eqn-ttw} \gen{A} * \gen{B}
=v^{\< \bc_\A(A),\, \bc_\A(B)\rangle} \gen{A}\diam \gen{B}.\end{equation}
We extend $\H_v(\A)$ by  adjoining 
new generators $K_\alpha$ for all $\alpha\in K(\A)$
with  the relations 
\begin{equation} 
\label{extend}
K_\alpha * K_\beta = K_{\alpha+ \beta}, \qquad 
K_\alpha * \gen{A}*K_{-\alpha} =\  
v^{(\alpha,\, \bc_\A(A))}\,  \gen{A} ,
\end{equation}
where we use the symmetric form $(\, , \, )$.
The resulting algebra will be called  the {\em extended Hall algebra} and denoted by $\widetilde{\H_v}(\A)$.

\subsection{Generalized Euler form}\label{ss:Euler}

Since the category $\R$ has enough projectives, it follows from the definition of $\A$ that $\Ext^1_\R(M,A)$ is a finite set 
for any $M\in \R$ and $A\in \A$. 
Define 
an Euler form $\< -, - \>: \R \times \A \to \mathbb{Z}$ 
 by setting 
\[
\< M, A \> := \mathrm{dim}_\k\Hom_\R(M,A) - \mathrm{dim}_\k\Ext^1_\R(M,A)
\]
for all $M\in \R, A\in \A$.  
The following lemma shows that the Euler form 
induces a bilinear map 
\[
\< -, - \>: K(\R) \times K(\A) \to \mathbb{Z}. 
\]

\begin{lem} \label{lem-gr-well}
The Euler form $\< M, A\>$
depends only on the classes of objects $M\in \R$ and  $A\in \A$ in the Grothendieck groups $K(\R)$
and $K(\A)$, respectively.
\end{lem}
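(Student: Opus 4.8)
The plan is to establish that the Euler form $\langle-,-\rangle$ is additive on short exact sequences separately in each of its two slots, and then to invoke the universal property of the Grothendieck groups. First fix $A\in\A$ and a short exact sequence $0\to M'\to M\to M''\to 0$ in $\R$. Applying $\Hom_\R(-,A)$ and using that $\R$ is hereditary, so that $\Ext^2_\R(-,A)=0$, yields an exact sequence
\begin{multline*}
0\to\Hom_\R(M'',A)\to\Hom_\R(M,A)\to\Hom_\R(M',A)\\
\to\Ext^1_\R(M'',A)\to\Ext^1_\R(M,A)\to\Ext^1_\R(M',A)\to 0 .
\end{multline*}
Each term is a finite set --- the Hom groups because $A\in\A$, the $\Ext^1$ groups by the remark opening Section~\ref{ss:Euler} --- hence a finite-dimensional $\k$-vector space, and taking the alternating sum of $\k$-dimensions gives $\langle M,A\rangle=\langle M',A\rangle+\langle M'',A\rangle$. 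Thus for each fixed $A$ the assignment $M\mapsto\langle M,A\rangle$ respects the defining relations of $K(\R)$ and descends to a homomorphism $\langle-,A\rangle\colon K(\R)\to\ZZ$.

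For the second slot, fix $M\in\R$ and a short exact sequence $0\to A'\to A\to A''\to 0$ in $\A$. I would first note that this sequence remains exact in $\R$: the subcategory $\A$ is closed in $\R$ under subobjects (since $\Hom_\R(N,-)$ is left exact), under quotients (using that $\Ext^1_\R(N,B)$ is finite whenever $B\in\A$), and under extensions, so $\A$ is an exact abelian subcategory whose kernels and cokernels coincide with those of $\R$. Applying $\Hom_\R(M,-)$ and repeating the six-term argument above then gives $\langle M,A\rangle=\langle M,A'\rangle+\langle M,A''\rangle$. Hence $A\mapsto\langle-,A\rangle$ is additive on short exact sequences of $\A$ and factors through $K(\A)$; combining the two slots produces the desired $\ZZ$-bilinear pairing $K(\R)\times K(\A)\to\ZZ$ lifting $\langle-,-\rangle$.

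This is the standard Euler-characteristic computation, so I do not expect a real obstacle. The two points that need care are: (i) checking that every Hom and $\Ext^1$ space occurring is genuinely finite-dimensional, so the alternating sums are meaningful --- this is exactly what the defining property of $\A$, together with hypothesis (b) (enough projectives), is there to guarantee; and (ii) verifying that a short exact sequence of $\A$ stays exact in $\R$, which is where one needs $\A$ to be closed under subobjects and quotients rather than merely under extensions.
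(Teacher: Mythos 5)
Your proposal is correct and follows essentially the same route as the paper: the six-term Hom--Ext exact sequence (terminating because $\R$ is hereditary), an alternating sum of dimensions giving additivity in each slot, and descent to the Grothendieck groups. The extra care you take about finiteness of each term and about exactness of sequences in $\A$ viewed in $\R$ is sound and only makes explicit what the paper leaves implicit with ``the proof for the class of $A$ is similar.''
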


\begin{proof}
Suppose $0 \to M' \to M \to M'' \to 0$ is an exact sequence in $\R$.  Then there is a long 
exact sequence 
\begin{align*}
0 \to \Hom(M'', A) \to \Hom(M, A) \to \Hom(M', A)\\[0.2cm]
\to \Ext(M'',A) \to \Ext(M,A) \to \Ext(M',A) \to 0
\end{align*}
which shows that 
\[\< M'',A\> - \< M,A\> + \< M',A\> = 0.\] 
So the Euler form is well-defined on the class of $M$.  The proof for the class of $A$ is similar. 
\end{proof}

In the remainder, let us write $\cl{X} = \bc_\R(X)$ to denote the class of 
an object $X$ in $K(\R)$, and continue to write $\bc_\A(-)$ for classes 
in $K(\A)$. 
The inclusion $\A\subset \R$
 induces a canonical map 
\begin{equation} \label{eqn-kr-ka}
K(\A) \to K(\R), \quad 
\bc_\A(A) \mapsto \cl{A}. 
\end{equation}
Write $\bar{K}(\A) \subset K(\R)$ to denote the image of $K(\A)$ under this map.

Suppose that $\{P_i\}_{i\in I}$ is a complete list of isomorphism classes of
indecomposable projective objects in $\P$, for some indexing set $I$. 
Then define the {\em dimension vector} of an object $A\in \A$ to be 
\[\mathbf{dim}\, A := (\dim_\k \Hom_\R(P_i,A))_{i\in I} \in \mathbb{Z}^{\oplus I} .\]
From condition (e), we have 
\[
\mathbf{dim}\, A = \mathbf{dim}\, B
\] 
for any objects 
$A, B\in \A$, whenever the classes $\hat{A} = \hat{B}$ are equal in the Grothendieck group $K(\R)$.

\begin{lem} \label{lem-mama}
Suppose that $A, A' \in \A$. 
Then $\< M,A \> = \<M,A'\>$ for all $M\in \R$ if and only if 
\[\dim_\k\Hom(P,A) = \dim_\k\Hom(P,A')\]
for all $P\in \P$. 
\end{lem}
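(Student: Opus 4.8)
The plan is to prove both implications directly from the definition of the Euler form and the properties of hereditary categories with enough projectives.

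For the ($\Leftarrow$) direction, suppose $\dim_\k\Hom(P,A) = \dim_\k\Hom(P,A')$ for all $P\in\P$. Given an arbitrary $M\in\R$, I would use condition (b): since $\R$ has enough projectives and is hereditary, $M$ admits a projective resolution $0\to P_1\to P_0\to M\to 0$ with $P_0,P_1\in\P$. Applying $\Hom_\R(-,A)$ yields a long exact sequence, and because $\mathrm{gl.dim}\,\R\le 1$, the higher terms vanish, giving $\<M,A\> = \<P_0,A\> - \<P_1,A\>$. But for a projective $P$, $\Ext^1_\R(P,A)=0$, so $\<P,A\> = \dim_\k\Hom(P,A)$. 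Hence $\<M,A\> = \dim_\k\Hom(P_0,A) - \dim_\k\Hom(P_1,A)$, and the analogous identity holds for $A'$. Since $P_0,P_1\in\P$, the hypothesis gives $\<M,A\>=\<M,A'\>$.

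For the ($\Rightarrow$) direction, this is immediate: if $\<M,A\>=\<M,A'\>$ for \emph{all} $M\in\R$, then in particular it holds for all $M=P\in\P$, and for projective $P$ we have $\<P,A\> = \dim_\k\Hom(P,A)$ and likewise for $A'$, so the displayed equality follows.

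I expect no serious obstacle here; the statement is essentially a bookkeeping consequence of heredity. The only point requiring a little care is justifying that $\<M,A\>$ can be computed from a length-one projective resolution of $M$ — i.e. that the Euler form on $K(\R)\times K(\A)$ (established in Lemma \ref{lem-gr-well}) is additive in the first variable and that $\hat M = \hat{P_0} - \hat{P_1}$ in $K(\R)$, which follows directly from the defining relations of the Grothendieck group together with the existence of such a resolution. One should also note that all the relevant $\Hom$ and $\Ext^1$ groups are finite-dimensional: $\Ext^1_\R(M,A)$ is finite by the remark preceding Section \ref{ss:Euler}, and $\Hom_\R(M,A)$ is finite since $A\in\A$, so the dimensions in question are genuinely well-defined integers.
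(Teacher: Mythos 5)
Your proof is correct and follows essentially the same route as the paper: the forward direction by specializing to projective $M$ (where $\<P,A\>=\dim_\k\Hom(P,A)$), and the converse by taking a length-one projective resolution $0\to P_1\to P_0\to M\to 0$ and reading off $\<M,A\>=\dim_\k\Hom(P_0,A)-\dim_\k\Hom(P_1,A)$ from the resulting exact sequence. The extra remarks on additivity in $K(\R)$ and finiteness are fine but not needed beyond what the paper already records.
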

\begin{proof}
The only-if-part  follows from the fact that 
$\<P,A\> = 
\dim_\k\Hom(P,A)$ for all $P\in \P$ and $A\in \A$.
For the if-part, let $M\in \R$, $A\in \A$, and suppose 
$0 \to P \to Q \to M \to 0$ is a projective resolution. 
Then there is a long exact sequence 
\[
0 \lra \Hom_\R(M, A) 
\lra \Hom_\R(Q, A)
\lra \Hom_\R(P,A)
\lra \Ext_\R(M,A)
\lra 0,\]
which shows that 
\[
\<M,A\> = \dim_\k\Hom(Q,A) - \dim_\k\Hom(P,A).
\]
The converse statement now follows. 
\end{proof}

The above lemma now has the following consequence.

\begin{cor} \label{cor-fac-th}
The Euler form on $\R \times \A$ 
factors through a bilinear form 
\[ \< -, - \>: K(\R) \times \bar{K}(\A) \to \mathbb{Z}.\]
\end{cor}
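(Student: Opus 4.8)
The plan is to combine Lemma \ref{lem-gr-well} with Lemma \ref{lem-mama} and condition (e). From Lemma \ref{lem-gr-well} we already know that $\langle M, A\rangle$ depends only on the class $\hat M \in K(\R)$ in the first variable, so the form descends to a map $K(\R) \times \A \to \ZZ$; it remains to check that it factors through $\bar K(\A)$ in the second variable, i.e.\ that $\langle M, A\rangle = \langle M, A'\rangle$ for all $M \in \R$ whenever $\hat A = \hat A'$ in $K(\R)$ (equivalently, whenever $A$ and $A'$ have the same image in $\bar K(\A) \subset K(\R)$, which by definition of the canonical map \eqref{eqn-kr-ka} is the relevant equivalence).

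First I would fix $A, A' \in \A$ with $\hat A = \hat A'$ in $K(\R)$. By condition (e), this identity of classes implies $|\Hom_\R(P,A)| = |\Hom_\R(P,A')|$ for every $P \in \P$, and since these are finite-dimensional $\k$-vector spaces this is equivalent to $\dim_\k \Hom(P,A) = \dim_\k \Hom(P,A')$ for all $P \in \P$. Next I would feed this into the if-part of Lemma \ref{lem-mama}, which yields exactly $\langle M, A\rangle = \langle M, A'\rangle$ for all $M \in \R$. Therefore the form is constant on fibers of the map $\A \to \bar K(\A)$, and so it descends to a well-defined map $K(\R) \times \bar K(\A) \to \ZZ$. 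Bilinearity is inherited from Lemma \ref{lem-gr-well} (additivity in each variable on representatives passes to the quotient), so the resulting map is bilinear as claimed.

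I do not anticipate a serious obstacle here: the corollary is essentially a bookkeeping assembly of the two preceding lemmas together with condition (e), with the only point needing a moment's care being the translation between the "same $K(\R)$-class" hypothesis, the cardinality statement in (e), and the $\dim_\k$ statement required as input to Lemma \ref{lem-mama} — all of which are immediate since everything in sight is finite-dimensional over the finite field $\k = \F$.
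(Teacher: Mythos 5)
Your proposal is correct and matches the paper's (essentially unwritten) argument: the paper derives the corollary directly from condition (e) together with Lemma \ref{lem-mama}, exactly as you do, with Lemma \ref{lem-gr-well} handling the first variable. The translation between cardinalities and $\k$-dimensions is the same routine step the paper takes implicitly.
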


Now suppose $X,Y\in \R$.  Then it follows from condition (d) that 
there exist objects 
$A_1, \dots, A_r \in \A$ such that 
\[\cl{Y} = 
c_1\cl{A}_1 + \dots + c_r\cl{A}_r  \]
for some coefficients $c_i \in \QQ$. 
Define a bilinear form 
\[
K(\R)\times K(\R) \to \QQ
\] 
by setting 
\begin{equation}\label{Euler2}
\< \cl{X},\cl{Y}\> = \sum_i  c_i  \cdot \<\cl{X}, \cl{A}_i\>.
\end{equation}
and extend it through linearity.
We again define a symmetric version by setting 
$(\cl{X},\cl{Y}) = \<\cl{X},\cl{Y}\>+\<\cl{Y},\cl{X}\>$ 
for any $X,Y\in \R$. 

\begin{prop} \label{prop-nonzero}
Let $A$ be a nonzero object in $\A$. Then the classes $\bc_\A(A) \in K(\A)$ and $\cl{A} \in K(\R)$ are both nonzero. 
\end{prop}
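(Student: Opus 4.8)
The plan is to prove the two nonvanishing statements separately, starting with $\cl{A} \in K(\R)$, since the claim about $\bc_\A(A) \in K(\A)$ will follow from it via the canonical map \eqref{eqn-kr-ka}. First I would invoke the dimension vector: recall that $\mathbf{dim}\, A = (\dim_\k \Hom_\R(P_i,A))_{i\in I}$, and by condition (e) this vector depends only on the class $\cl{A} \in K(\R)$. So it suffices to show that $\mathbf{dim}\, A \neq 0$ whenever $A$ is a nonzero object of $\A$, i.e. that there exists an indecomposable projective $P_i$ with $\Hom_\R(P_i, A) \neq 0$. This is where I would use that $\R$ has enough projectives (condition (b)): choose a projective cover or simply a surjection $P \epito A$ with $P \in \P$, which is nonzero since $A \neq 0$. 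Decomposing — or rather, since $\P$ need not be Krull--Schmidt, using that $P$ is projective and the surjection $P \epito A$ is nonzero — one gets $\Hom_\R(P, A) \neq 0$, and then a summand argument produces an indecomposable $P_i$ with $\Hom_\R(P_i, A) \neq 0$. Hence $\mathbf{dim}\, A \neq 0$, so $\cl{A} \neq 0$ in $K(\R)$.

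For the statement that $\bc_\A(A) \neq 0$ in $K(\A)$: here I would first observe that $\A$ is a Hom-finite, hence Krull--Schmidt, abelian category (as noted in Section \ref{ass}), so $K(\A)$ has a well-behaved description. The cleanest route is to note that the image $\cl{A}$ of $\bc_\A(A)$ under the canonical map $K(\A) \to K(\R)$ of \eqref{eqn-kr-ka} is already shown to be nonzero above, and since $\bc_\A(A) \mapsto \cl{A}$, nonvanishing of the image forces nonvanishing of $\bc_\A(A)$ itself. This is the quickest argument and avoids any delicate analysis of $K(\A)$ directly.

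The one subtlety I anticipate — the main obstacle — is the first reduction: arguing that a nonzero object $A \in \A$ admits a nonzero map from an \emph{indecomposable} projective, given that $\R$ (and $\P$) need not be Krull--Schmidt. I would handle this by taking a surjection $\pi \colon P \epito A$ from some $P \in \P$, which is nonzero. Then $\Hom_\R(P,A) \neq 0$. To pass to an indecomposable summand, note that in a hereditary category with enough projectives, a projective cover can be taken so that $P$ decomposes (possibly infinitely) as a direct sum of indecomposables $\bigoplus_j P_{i_j}$; since $\pi \neq 0$, its restriction to at least one summand $P_{i_j}$ is nonzero, giving $\Hom_\R(P_{i_j}, A) \neq 0$ with $P_{i_j}$ indecomposable. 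An alternative, should direct-sum decompositions of infinite projectives be troublesome, is to observe that any projective $P$ surjecting onto $A \neq 0$ satisfies $\dim_\k \Hom_\R(P,A) \geq 1$, and $\Hom_\R(P,A) = \bigoplus_i \Hom_\R(P_i, A)^{\oplus (\text{mult. of } P_i \text{ in } P)}$ as $\k$-vector spaces when $P$ is a sum of the $P_i$'s, forcing some factor to be nonzero. Either way, once $\mathbf{dim}\, A \neq 0$ is established, everything else is immediate from Lemma \ref{lem-mama}, condition (e), and the map \eqref{eqn-kr-ka}.
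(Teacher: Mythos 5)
Your overall strategy matches the paper's: take a surjection $P \epito A$ from a projective, observe that $\Hom_\R(P,A)\neq 0$, and use the fact that (by condition (e)) this Hom-space is an invariant of the class $\cl{A}\in K(\R)$ to conclude $\cl{A}\neq 0$; the statement for $\bc_\A(A)$ then follows from the canonical map \eqref{eqn-kr-ka} exactly as you say. The one step that does not follow from the axioms is your reduction to an \emph{indecomposable} projective $P_i$ via the dimension vector: in the general setting of assumptions (a)--(e) there is no guarantee that a projective $P\in\P$ decomposes as a direct sum of indecomposables (the Krull--Schmidt property for $\P$ is established only later, in the quiver setting, in Proposition \ref{prop:KS}), and both of your suggested workarounds still presuppose such a decomposition. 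Fortunately the detour is unnecessary: condition (e) (equivalently, Lemma \ref{lem-mama} together with Corollary \ref{cor-fac-th}) applies to \emph{every} $P\in\P$, not just indecomposable ones, so you can work directly with the $P$ that surjects onto $A$ --- if $\cl{A}=\cl{0}$ then $|\Hom(P,A)|=|\Hom(P,0)|=1$ for all $P\in\P$, contradicting the surjection. This is what the paper does, phrased via the Euler form: $\< P,A\> = \dim_\k\Hom(P,A)\ge 1$ since $\Ext^1(P,A)=0$, and this quantity depends only on $\cl{A}$ (and on $\bc_\A(A)$) by Corollary \ref{cor-fac-th}, so neither class can vanish. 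With that one repair your argument is correct and essentially coincides with the paper's.
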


\begin{proof}
 Consider a nonzero object $A \in \A$. Since $\R$ has enough projectives, there exists a surjection $P\rightarrow A \rightarrow 0$ for some projective $P \in \P$.  It follows that $\Hom(P,A)$ is a nonzero set. We obtain
\[   \<P, A\>  =  \dim_\k \Hom(P,A) - \dim_\k \Ext(P,A)  =  \dim_\k \Hom(P,A)   \geq  1.\]
By Lemma \ref{lem-mama} and Corollary \ref{cor-fac-th}, we obtain
\[   \<P, A\> =\<\cl{P}, \bc_\A(A) \>  = \<\cl{P}, \cl{A}\>  \geq  1.\]Thus neither  $\cl{A}$ nor $\bc_\A(A)$ is zero.  
\end{proof}

Denote by ${K}_{\geq 0}(\A)\subset K(\A)$ the positive cone in the  Grothendieck group generated by the classes $\bc_\A(A)$ for  $A\in\A$. 
Define
\begin{equation} \label{order}
\alpha\leq \beta \iff \beta-\alpha\in {K}_{\geq 0}(\A).
\end{equation}
Then it follows from Proposition \ref{prop-nonzero} that $\leq$ is a partial order on $K(\A)$.

\subsection{The extended Hall algebra $\widetilde{\H_v}(\R)$}\label{ss:Extend-1}

The algebra $\H_v(\A)$ is naturally graded by the Grothendieck group 
$K(\R)$: 
\begin{equation*}
\H_v(\A) = \bigoplus_{\alpha \in K(\R)} \H_v(\A)_{(\alpha)}, \qquad
\H_v(\A)_{(\alpha)}  :=  \bigoplus_{\cl{A}=\alpha} \CC\gen{A}.
\end{equation*}

We define a slight modification of the extended Hall algebra 
 $\widetilde{\H_v}(\A)$
from Section \ref{ss:Extend}. 
Starting again from $\H_v(\A)$ with multiplication \eqref{eqn-ttw}, 
the {\em extended Hall algebra} 
 $\widetilde{\H_v}(\R)$
 is defined by adjoining  generators $K_\alpha$ for all $\alpha \in K(\R)$ 
with  the relations 
\begin{equation}
\label{extend2}
K_\alpha \ast K_\beta = K_{\alpha+\beta}, \qquad 
K_\alpha * \gen{A}*K_{-\alpha} =\  
v^{(\alpha, \cl{A})} \gen{A}.  
\end{equation}
The algebra $\widetilde{\H_v}(\R)$ is also $K(\R)$-graded, 
with the degree of each $K_\alpha$ equal to zero. 
Note that the multiplication map 
\[\H_v(\A)  \otimes_\CC \CC[K(\R)] 
\to \widetilde{\H_v}(\R)\]
is an isomorphism of vector spaces.

Following Green \cite{Gr} and Xiao \cite{Xi}, 
we define a coalgebra structure on $\widetilde{\H_v}(\R)$. 
We refer to \cite{Ya} for the definition of a {\em topological} coalgebra,  
which involves a completed tensor product.

\begin{definition}\cite{Gr,Xi}
In the extended Hall algebra $\H=\widetilde{\H_v}(\R)$ 
define $\Delta:\H \to \H\,\widehat{\otimes}_{\CC}\, \H$, and $\epsilon: \H \to \CC$, 
by setting
\begin{align*}
\label{eq:Green}
 \Delta([A] K_\alpha) :=  \sum_{B,C \in \Iso(\A)} v^{\<B,C\>} g^A_{B,C}  \cdot  (\gen{B} K_{\cl{C} + \alpha}) \otimes (\gen{C} K_{\alpha}), \qquad
 \epsilon([A] K_\alpha):=\delta_{A,0}
\end{align*} 
for all $A\in \Iso(\A)$, $\alpha \in K(\R)$, 
where  the numbers $g^A_{B,C}$ are defined in \eqref{eqn-gABC} and   
$\H \,\widehat{\otimes}_{\CC}\, \H$ is the completed tensor product.
This gives $\widetilde{\H_v}(\R)$ the structure of a topological coassociative coalgebra. 
\end{definition}

As noted in \cite[Remark 1.6]{Sch}, 
the coproduct $\Delta$ on $\H=\widetilde{\H_v}(\R)$ takes values in 
$\H \otimes \H$, instead of the completion
$\H \,\widehat{\otimes}\, \H$, 
if and only if the following condition holds:
\begin{equation}\label{sub}
\text{\em Any fixed object $A \in \A \subset \R$ has only finitely many subobjects $B\subset A$}. 
\end{equation}
This condition is satisfied if $\R$ is the category of quiver representations 
considered in Section \ref{sec-q-group}. 

Now we have an algebra structure 
and a coalgebra structure 
on  $\widetilde{\H_v}(\R)$. 
It follows from \cite{Gr,Xi} that  these structures are compatible to give a (topological) \emph{bialgebra} structure.
Below, we simply denote by 
$\widetilde{\H_v}(\R)$
the bialgebra
$(\widetilde{\H_v}(\R),\ast,[0],\Delta,\epsilon)$.
The bialgebra  $\widetilde{\H_v}(\R)$ 
admits a natural bilinear form compatible with the bialgebra structure 
called a \emph{Hopf pairing}.

\begin{definition}[\cite{Gr,Xi, Ya}]
\label{df:hp} 
Define a  bilinear form $(\cdot,\cdot)_H$ on $\widetilde{\H_v}(\R)$ 
by setting 
\[
 ([A] K_\alpha,[B] K_\beta)_H 
  := 
  v^{(\alpha,\beta)} a_{A}
\,\delta_{A,B}
\]
for $\alpha , \beta \in K(\R)$, where 
$a_A = |\Aut_{\mathcal{A}}(A)|$ as before. 
\end{definition}

It is clear that the restriction of this bilinear form to the 
subalgebra $\H_v(\A) \subset \widetilde{\H_v}(\R)$ 
is nondegenerate. 
The following result was stated for 
$\widetilde{\H_v}(\A)$ in \cite{Xi}.  
It is easy to check that it holds for $\widetilde{\H_v}(\R)$ as well. 

\begin{prop}[\cite{Sch,Xi}] \label{prop-Hopf-pairing}
The bilinear form $(\cdot,\cdot)_H$ is
a  Hopf pairing 
on the bialgebra $\widetilde{\H_v}(\R)$,
that is, 
for any $x,y,z \in  \widetilde{\H_v}(\R)$, 
one has 
\begin{equation*}
   (1, x)_H
 = \epsilon(x), \qquad
   (x * y, z)_H
 = (x \otimes y, \Delta(z))_H
\end{equation*}
where
we use the usual pairing on the tensor product space:
\[
 (x \otimes y, z \otimes w)_H = 
 (x,z)_H \, (y, w)_H .
\]
\end{prop}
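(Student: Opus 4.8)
The plan is to verify the two identities of Proposition \ref{prop-Hopf-pairing} by reducing everything to the ordinary (non-extended, untwisted) Hall algebra, where the corresponding statement is classical, and then tracking the twisting cocycle and the $K_\alpha$-contributions. Since both sides of each identity are bilinear in their arguments and the elements $[A]K_\alpha$ ($A \in \Iso(\A)$, $\alpha \in K(\R)$) form a basis of $\widetilde{\H_v}(\R)$, it suffices to check the identities on such basis elements. The first identity $(1,x)_H = \epsilon(x)$ is immediate: writing $x = [A]K_\alpha$, the left side is $([0]K_0, [A]K_\alpha)_H = v^{(0,\alpha)} a_0 \delta_{A,0} = \delta_{A,0}$, which matches $\epsilon([A]K_\alpha) = \delta_{A,0}$.

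For the multiplicativity identity $(x*y,z)_H = (x\otimes y, \Delta(z))_H$, I would take $x = [A]K_\alpha$, $y = [B]K_\beta$, $z = [C]K_\gamma$. First, expand $x*y$ using \eqref{eqn-ttw} and the commutation relation \eqref{extend2}: one gets $[A]K_\alpha * [B]K_\beta = v^{(\alpha,\cl B)} v^{\<\cl A,\cl B\>} \sum_{E} g_{A,B}^E \, a_E a_A^{-1} a_B^{-1} \, [E] K_{\alpha+\beta}$ after converting between the $[A]$ and $[[A]]$ generators (using $[[A]] = [A] a_A^{-1}$ and the product formula $[[A]]\diam[[B]] = \sum g_{A,B}^E [[E]]$). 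Pairing with $z = [C]K_\gamma$ via Definition \ref{df:hp} isolates the $E$ with $[E]\cong C$ in $\Iso(\A)$, yielding
\[
(x*y,z)_H = v^{(\alpha,\cl B)} v^{\<\cl A,\cl B\>} \, g_{A,B}^C \, a_C a_A^{-1} a_B^{-1} \, a_C \, v^{(\alpha+\beta,\gamma)}.
\]
On the other side, $\Delta([C]K_\gamma) = \sum_{B',C'} v^{\<B',C'\>} g_{B',C'}^C (\,[B']K_{\cl{C'}+\gamma}) \otimes ([C']K_\gamma)$, and applying $(x\otimes y, -)_H$ forces $B' \cong A$ and $C' \cong B$, giving
\[
(x\otimes y, \Delta(z))_H = v^{\<\cl A,\cl B\>} g_{A,B}^C \, v^{(\alpha,\cl B+\gamma)} a_A \cdot v^{(\beta,\gamma)} a_B.
\]
The remaining task is purely combinatorial: confirm that the scalar prefactors agree. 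The classical Hall-algebra identity (Green's theorem, see \cite{Gr,Xi}) handles the untwisted, unextended case, i.e. the equality of the $g$-coefficients together with the $a_C/(a_A a_B)$ normalization; what I must additionally check is that the $v$-powers coming from the Euler form and the $K_\alpha$ relations match on the two sides, using bilinearity of $(\,,\,)$ and $\<\,,\,\>$ on $K(\R)$ (Corollary \ref{cor-fac-th} and \eqref{Euler2}) and the compatibility $\<B',C'\>$ in $\Delta$ being $\<\cl{B'},\cl{C'}\>$.

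The main obstacle, as the paper signals, is not the algebra identities themselves — they are formally the same as in \cite{Xi} — but ensuring that all the manipulations make sense in the present more general setting, where $\R$ need not be Hom-finite and the Euler form on $K(\R)$ takes values only in $\QQ$. Concretely, I must make sure the sums defining $\Delta$ and the product remain well-defined (finitely supported, or convergent in the completed tensor product $\H\,\widehat\otimes\,\H$), which is guaranteed because $\Delta$ lands in the completion by construction and the pairing $(\,,\,)_H$ extends continuously to $\H\,\widehat\otimes\,\H$ since for fixed $z$ only finitely many terms of $\Delta(z)$ pair nontrivially with any fixed $x\otimes y$; and I must check that the exponents $(\alpha,\cl B)$ etc., a priori in $\QQ$, in fact combine into the integer (or at least well-defined real) powers of $v$ appearing on both sides — this follows since the two sides differ, after cancelling the common classical factor, by $v$ raised to $(\alpha,\cl B) + (\alpha+\beta,\gamma) - (\alpha,\cl B+\gamma) - (\beta,\gamma)$, which is identically $0$ by bilinearity. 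Once these bookkeeping points are settled, the identity holds on basis elements and hence everywhere by bilinearity.
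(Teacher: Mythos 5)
Your overall strategy is the right one, and in fact it supplies more detail than the paper does: the paper simply cites \cite{Xi} and asserts that the verification carries over to $\widetilde{\H_v}(\R)$, so a direct check on basis elements $[A]K_\alpha$ is exactly what is being left to the reader. Your treatment of the first identity, of the completion issue (only one term of $\Delta(z)$ pairs nontrivially with a fixed $[A]K_\alpha\otimes[B]K_\beta$), and of the cancellation of the $\QQ$-valued exponents $(\alpha,\cl B)+(\alpha+\beta,\gamma)-(\alpha,\cl B+\gamma)-(\beta,\gamma)=0$ are all correct.

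However, there is a concrete error in the conversion between the generators $\gen{A}$ and $[[A]]=\gen{A}a_A^{-1}$: since $\gen{A}=a_A[[A]]$ and $[[A]]\diam[[B]]=\sum_E g^E_{A,B}[[E]]$, the product is $\gen{A}\diam\gen{B}=\sum_E g^E_{A,B}\,a_Aa_B\,a_E^{-1}\gen{E}$, not $\sum_E g^E_{A,B}\,a_E\,a_A^{-1}a_B^{-1}\gen{E}$ as you wrote (equivalently, this is just the identity $|\Ext^1(A,B)_C|/|\Hom(A,B)|=g^C_{A,B}\,a_Aa_B/a_C$ stated in the paper). With your inverted factor, your displayed left-hand side carries the scalar $g^C_{A,B}\,a_C^2/(a_Aa_B)$ while your right-hand side carries $g^C_{A,B}\,a_Aa_B$, and these do \emph{not} agree in general, so the "purely combinatorial" check you defer would fail as set up. With the correct factor both sides equal $v^{\<\cl A,\cl B\>+(\alpha,\cl B)+(\alpha,\gamma)+(\beta,\gamma)}\,g^C_{A,B}\,a_Aa_B$, and the identity holds term by term with no further input. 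Relatedly, invoking Green's theorem here is a misattribution: Green's theorem is the compatibility $\Delta(x*y)=\Delta(x)*\Delta(y)$ (the hard bialgebra statement, quoted separately from \cite{Gr,Xi}), whereas the Hopf-pairing identity is immediate from the definitions of $\Delta$ and $(\cdot,\cdot)_H$ once the automorphism factors are straightened out.
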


\subsection{The Drinfeld double}

We briefly recall the Drinfeld double construction
for Hall bialgebras. 
A complete treatment of Drinfeld doubles is given in 
\cite[\S 3.2]{Jo} and \cite[\S 5.2]{Sch}. 

In \cite{Xi}, Xiao showed that the extended Hall algebra 
$\widetilde{H_v}(\A)$ is a Hopf algebra 
and gave an explicit formula for 
both the antipode $\sigma$ and its inverse $\sigma^{-1}$, 
provided that $\A$ is a category of quiver representations. 
It can be shown that Xiao's formulas hold more generally 
provided that $\A \subset \R$ satisfies \eqref{sub}. 

Athough the formula for $\sigma$ is no longer well-defined  
in the case where $\A$ does not satisfy \eqref{sub}, 
there is a more general condition 
that ensures the formula for $\sigma^{-1}$ is still defined. 
Recall from \cite{Cram} that an {\em anti-equivalence} between two objects $A,B \in \A$ 
is a pair of strict filtrations 
\[
0=L_{n+1} \subsetneq L_{n} \subsetneq \dots L_{1} \subsetneq L_{0} = A, 
\qquad
0=L'_{n+1} \subsetneq L'_{n} \subsetneq \dots L'_{1} \subsetneq L'_{0} = B
\]
%
such that $L'_{i}/L'_{i+1}\cong L_{n-i}/L_{n-i+1}$
for all $i$. 
Two objects $A$ and $B$ in $\A$ are called {\em anti-equivalent}
if there exists at least one anti-equivalence between them.

It follows from results in \cite{BS} and \cite{Cram} 
that Xiao's formula for the map $\sigma^{-1}$ is still well-defined 
provided that 
the following condition holds
for every pair of objects $A,B\in \A$: 
\begin{equation}
\label{sigma1}
\text{\em 
There are finitely many anti-equivalences (if any) 
between $A$ and $B$.}
\end{equation}
Since the map $\sigma^{-1}$ generally 
takes values in a certain completion 
of $\widetilde{\H_v}(\R)$, an extra condition is needed to ensure 
that corresponding relations in the bialgebra, such as 
%
\[m\circ (\sigma^{-1}\otimes \id) \circ \Delta^{\rm{op}} = i \circ \epsilon, \]
are still well-defined. 
%
The required condition can be stated as follows: 
\begin{gather}
\label{sigma2}
\text{\em 
Given any $A,B\in \A$,
there are finitely many pairs $(A',B')$ of anti-equivalent }\\
\text{\em objects  such that $A'\hookrightarrow A$ and $B\twoheadrightarrow B'$.} \nonumber
\end{gather}
Up to minor modifications, the formulas for (and corresponding properties of) $\sigma$ and $\sigma^{-1}$
continue to hold, respectively, in $\widetilde{H_v}(\R)$, whenever they are defined in $\widetilde{H_v}(\A)$. 

If the conditions 
\eqref{sigma1} and \eqref{sigma2}
are both satisfied by the category $\A\subset \R$, 
then the {\em Drinfeld double} of $\H= \widetilde{\H_v}(\R)$ is the vector space 
$\H\otimes \H$ equipped with the multiplication $\circ$ 
uniquely determined by the following conditions:
\begin{itemize}
\item [(D1)]
The maps 
\[
\H \longrightarrow \H \otimes_{\CC} \H,
 \quad
 a \longmapsto a\otimes 1
\]
and
\[
\H \longrightarrow \H \otimes_{\CC} \H,
 \quad
 a \longmapsto 1 \otimes a
\]
are injective homomorphisms of $\CC$-algebras;
\smallskip

\item [(D2)]
For all elements $a,b \in \H$, one has
\[
 (a\otimes 1)\circ(1\otimes b)=a \otimes b;
\]
\item [(D3)]
For all elements $a,b \in \H$, one has
\begin{equation*}
(1\otimes b) \circ (a\otimes 1) 
=  \sum 
  (b_{(1)},a_{(3)})_H 
  (\sigma^{-1}(b_{(3)}),a_{(1)})_H 
  \cdot a_{(2)} \otimes b_{(2)}
\end{equation*}
where 
$\Delta^2(a)=\sum a_{(1)} \otimes a_{(2)} \otimes a_{(3)}$
and  
$\Delta^2(b)=\sum b_{(1)} \otimes b_{(2)}\otimes b_{(3)}$.
\end{itemize}
The last identity is equivalent to  
%
\begin{equation}
\label{eq:Drinf}
  \sum (a_{(2)},b_{(1)})_H \cdot
 a_{(1)} \otimes b_{(2)}
 =\sum (a_{(1)},b_{(2)})_H \cdot
    (1 \otimes b_{(1)}) \circ (a_{(2)} \otimes 1)
\end{equation}
for all $a,b\in \H$, 
where 
$\Delta(a)=\sum a_{(1)} \otimes a_{(2)}$ and 
$\Delta(b)=\sum b_{(1)} \otimes b_{(2)}$.

An argument similar to the proof given in \cite[Lemma 3.2.2]{Jo} 
shows that the multiplication $\circ$ is associative. 
%
%
%

\section{Hall algebras of complexes}
\label{sec:cplx}
Assume that $\R$ is an abelian category for which the triple $(\R, \P, \A)$ satisfies  axioms (a)-(e) of Section \ref{ass}. We now introduce certain categories of complexes over $\R$ and define corresponding Hall algebras and their localizations.
The associativity of multiplication in the localized Hall algebras will be established later in Section \ref{sec:dbl}.

\subsection{Categories of complexes}
Define a $\Zt$-graded chain complex in $\R$ to be a diagram 
\[ \cplx{M_1}{d_1}{d_0}{M_0}\] 
such that $d_i \circ d_{i+1} = 0$ for all $i\in \Zt$.  


A morphism $s_\blob \colon M_\blob\to \tilde{M}_\blob$ consists of a diagram
\[\begin{tikzcd}
M_1\ar[d, "s_1" '] \ar[r, "d_1", shift left]  & M_0 \ar[l, "d_0", shift left]  \ar[d, "s_0"] \\
\tilde{M}_1\ar[r,"\tilde{d}_1", shift left] & \tilde{M}_0 \ar[l, "\tilde{d}_0", shift left]
\end{tikzcd}\]
with $s_{i+1}\circ d_{i} = \tilde{d}_i\circ s_{i}$.

Let $\C_\Zt(\R)$ denote the category of all 
$\Zt$-graded 
chain complexes in $\R$ 
with morphisms defined above. 
Two morphisms $s_\blob, t_\blob\colon M_\blob\to \tilde{M}_\blob$ are {\em homotopic}  
if there are morphisms $h_i\colon M_i\to \tilde{M}_{i+1}$ such that
\[ t_i-s_i=\tilde{d}_{i+1}\circ h_{i}+h_{i+1}\circ d_{i}.\]
We write $\Ho_\Zt(\R)$ for the category obtained from $\C_\Zt(\R)$ by identifying homotopic morphisms.

The {\em shift functor} defines an involution 
\[\C_\Zt(\R) \qquad \stackrel{\dag}{\longleftrightarrow} \qquad \C_\Zt(\R)\]  which shifts the grading and changes the sign of the differential
\[ 
\begin{tikzcd}[cramped, column sep=0.75cm] M_1 \ar[r, "d_1", shift left] & M_0 \ar[l, "d_0", shift left] 
\end{tikzcd} 
\qquad
\stackrel{\dag}{\longleftrightarrow}  
\qquad 
\begin{tikzcd}[cramped, column sep=0.75cm] M_0 \ar[r, "-d_0", shift left] & M_1 \ar[l, "-d_1", shift left] 
\end{tikzcd} . \] 
The image of $M_\blob$ under the shift functor will be denoted by $M^*_\blob$. Every complex $M_\blob\in \C_\Zt(\R)$ defines a class
$\cl{M_\blob}:=\cl{M_0}-\cl{M_1}\in K(\R)$ in the Grothendieck group of $\R$. 

We are mostly concerned with the 
full subcategories  
\[ \C_\Zt(\P),\ \C_\Zt(\A) \subset \C_\Zt(\R) \quad  
\text{and} \quad
\Ho_\Zt(\P) \subset \Ho_\Zt(\R)\]
consisting of complexes of objects in  
$\P$ and $\A$, respectively. 
%



\subsection{Root category}
Let $\D^b(\R)$ denote the ($\ZZ$-graded) bounded derived category of $\R$, with its shift functor $[1]$. 
Let $\Ro(\R)=\D^b(\R)/[2]$ be the {\em orbit category}, also known as the {\em root category} of $\R$. This has the same objects as $\D^b(\R)$, but the morphisms are given by
\[\Hom_{\Ro(\R)} (X,Y):=\bigoplus_{i\in \ZZ} \Hom_{\D^b(\R)}(X,Y[2i]).\]
 Since $\R$ is an abelian category of finite global dimension ($\leq 1$) 
with  enough projectives, the category $\D^b(\R)$ is equivalent to the ($\ZZ$-graded) bounded homotopy category 
$\Ho^b(\P)$.
Thus we can equally well define $\Ro(\R)$ as the orbit category of $\Ho^b(\P)$.

 \begin{lem}[\cite{Br}]
 \label{root}
 There is a fully faithful functor
\[D\colon \Ro(\R)\lra \Ho_{\Zt}(\P)\]
sending a $\ZZ$-graded complex of projectives $(P_i)_{i\in\ZZ}$ to the $\Zt$-graded complex
\[
\cplx{\bigoplus_{i\in \ZZ} P_{2i+1}}{0}{0}{\bigoplus_{i\in\ZZ} P_{2i}}.
\]
\end{lem}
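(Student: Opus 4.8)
The plan is to construct the functor $D$ explicitly and then verify full faithfulness by reducing to the known structure of morphisms in the root category $\Ro(\R)$. First I would fix, for each object $X$ of $\Ro(\R)$, a representative $\ZZ$-graded complex of projectives $(P_i)_{i\in\ZZ}$; since $\R$ is hereditary with enough projectives, every object of $\D^b(\R)\simeq\Ho^b(\P)$ is isomorphic (in $\Ho^b(\P)$) to a complex in which all terms are projective and almost all are zero, so the direct sums $\bigoplus_i P_{2i+1}$ and $\bigoplus_i P_{2i}$ appearing in the statement are actually finite and hence lie in $\P$. The differential of $D(X)$ is assembled from the components of the original differential: $d_0\colon \bigoplus_i P_{2i}\to\bigoplus_i P_{2i+1}$ collects all the maps $P_{2i}\to P_{2i-1}$, and $d_1$ collects the maps $P_{2i+1}\to P_{2i}$; one checks $d_0d_1=0$ and $d_1d_0=0$ directly from $d^2=0$ in the original complex. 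On morphisms, a chain map (resp. homotopy) between $\ZZ$-graded complexes folds, componentwise, into a $\Zt$-graded chain map (resp. homotopy), so $D$ is well-defined on $\Ho_{\Zt}(\P)$; it is visibly additive and functorial.

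For full faithfulness I would argue as follows. Because $D$ is visibly bijective on the "folded" data at the level of $\C_\Zt$ — a $\Zt$-graded chain map $D(X)\to D(Y)$ is the same thing as a collection $(f_{ij}\colon P_i\to Q_j)$ with $i\equiv j\pmod 2$ satisfying the chain-map condition, i.e.\ exactly a morphism $X\to Y[2i]$ summed over $i$ — the map
\[
\Hom_{\C_\Zt(\P)}(D(X),D(Y))\;\xrightarrow{\ \sim\ }\;\bigoplus_{i\in\ZZ}\Hom_{\Ho^b(\P)}(X,Y[2i])^{\sim}
\]
is already a bijection at the level of actual chain maps (before quotienting by homotopy on the left). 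The point is then that this identification is compatible with the two notions of homotopy: a $\Zt$-graded homotopy for $D(X)\to D(Y)$ is precisely a $\ZZ$-indexed family of homotopies, one in each degree-shift $Y[2i]$. Passing to homotopy classes on both sides therefore yields
\[
\Hom_{\Ho_\Zt(\P)}(D(X),D(Y))\;\cong\;\bigoplus_{i\in\ZZ}\Hom_{\Ho^b(\P)}(X,Y[2i])\;=\;\Hom_{\Ro(\R)}(X,Y),
\]
which is exactly the assertion that $D$ is fully faithful, using the equivalence $\D^b(\R)\simeq\Ho^b(\P)$ recalled just above the lemma to rewrite the right-hand side. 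Injectivity on objects is not claimed and not needed (this is a fully faithful, not an equivalence, statement — the essential image consists of the "finite" complexes).

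The main subtlety — and the step I would treat most carefully — is the bookkeeping of the folding operation and the verification that it is a bijection rather than merely a surjection or injection. Concretely: one must check that an arbitrary $\Zt$-graded chain map $D(X)\to D(Y)$ decomposes uniquely into its graded components $P_i\to Q_j$ with $j-i$ even, that reassembling these into maps $X\to Y[2i]$ respects the differentials on the nose, and — for the homotopy statement — that a $\Zt$-graded homotopy $h\colon D(X)_\blob\to D(Y)_{\blob+1}$ likewise splits uniquely into homotopies at each even shift, with the homotopy relation matching up degree by degree. Since $\R$ is hereditary, there are no convergence issues: all the relevant direct sums are finite, so no completion of $\Hom$-spaces is required and the bijection is genuine. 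Once this folding dictionary is set up, full faithfulness is immediate; the remaining verifications ($d^2=0$ for $D(X)$, functoriality, additivity) are routine diagram checks that I would not spell out in detail.
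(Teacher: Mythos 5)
The paper gives no proof of this lemma at all --- it simply cites Bridgeland --- and your folding/unfolding argument is exactly the standard one implicit in the cited source: a $\Zt$-graded chain map $D(X)\to D(Y)$ decomposes by the (even) degree difference of its components into a family of chain maps $X\to Y[2m]$, the $\Zt$-graded homotopies decompose likewise by odd degree difference, and boundedness of the complexes makes all sums finite, giving $\Hom_{\Ho_\Zt(\P)}(D(X),D(Y))\cong\bigoplus_m\Hom_{\Ho^b(\P)}(X,Y[2m])=\Hom_{\Ro(\R)}(X,Y)$. Your proposal is correct; the only detail worth adding is the (routine) check that this identification also respects composition as defined in the orbit category, so that $D$ is indeed a functor and not merely a bijection on Hom-sets.
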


\subsection{Decompositions} \label{sec-decom}
From now on, we omit $\Zt$ in the notations for categories and write
\begin{align*} 
\C(\P)&=\C_\Zt(\P),\ &\C(\A)&=\C_\Zt(\A), \ &\C(\R)&=\C_\Zt(\R),\\  \Ho(\P)&= 
\Ho_\Zt(\P), \ &\Ho(\R)&= \Ho_\Zt(\R). && \end{align*}
The homology of a complex $M_\blob \in \C(\R)$ will be denoted 
\[H_\blob(M_\blob)\,  := \ 
(\begin{tikzcd}[cramped, column sep=scriptsize] H_1(M_\blob) \ar[r, "0", shift left] & H_0(M_\blob) \ar[l, "0", shift left] 
\end{tikzcd}) \ 
 \in\, \C(\R). \] 
To each morphism $f:P\to Q$ in the category $\P$, we associate the following complexes
\begin{equation}\label{eq:comp}
C_{f} :=
(\begin{tikzcd}[cramped, column sep=scriptsize] P \ar[r, "f", shift left] & Q \ar[l, "0", shift left] 
\end{tikzcd}), 
\qquad 
C^\dag_{f} :=
(\begin{tikzcd}[cramped, column sep=scriptsize] Q \ar[r, "0", shift left] & P \ar[l, "-f", shift left] 
\end{tikzcd})
\end{equation}
in $\C(\P)$. 

\begin{lem}
\label{decomp}
Every complex of projectives  
$M_\blob \in \C(\P)$ can be decomposed  uniquely,  up to isomorphism, 
as a direct sum of complexes of the form  
\[M_\blob = C_{f} \oplus C_{g}^\dag\]
for some injective morphisms $f, g$ in $\P$ 
such that 
$H_0(M_\blob) = \cok(f)$
and 
$H_1(M_\blob) = \cok(g)$.
\end{lem}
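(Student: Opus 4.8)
The plan is to reduce the statement to the classical structure theorem for chain complexes over a hereditary category with enough projectives, exploiting the fact that in a $\Zt$-graded complex of projectives every cycle submodule is again projective. Concretely, given $M_\blob = (\,\cplx{M_1}{d_1}{d_0}{M_0}\,)$ with $M_0, M_1 \in \P$, I would first consider the short exact sequences $0 \to Z_i \to M_i \to B_{i+1} \to 0$, where $Z_i = \ker d_i$ and $B_{i+1} = \im d_i \subset Z_{i+1} \subset M_{i+1}$. Since $B_{i+1} \subset M_{i+1}$ and $\R$ is hereditary, $B_{i+1}$ is projective (submodules of projectives have projective dimension $\le 0$ in a hereditary category), hence each of these sequences splits; likewise $0 \to B_i \to Z_i \to H_i(M_\blob) \to 0$ need not split, but $0 \to Z_i \to M_i \to B_{i+1}\to 0$ does. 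So we may choose splittings $M_i \cong Z_i \oplus B_{i+1}$ compatibly with the differential, and the differential $d_i \colon M_i \to M_{i+1}$ is, under these identifications, the composite $M_i \epito B_{i+1} \monoto Z_{i+1} \monoto M_{i+1}$.

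\textbf{Key steps.} First I would make the above splitting precise and observe that it presents $M_\blob$ as a direct sum of two complexes of the shape $\cplx{B_{i+1}}{\iota}{0}{M_0}$-type pieces: writing $f \colon M_1 \to M_0$ for the "one-sided" differential obtained by restricting attention to the summands on which $d_0$ is zero and $d_1$ is injective, and $g$ analogously for the shifted piece, one gets $M_\blob \cong C_f \oplus C_g^\dag$ with $f, g$ injective. The injectivity of $f$ and $g$ is exactly the statement that on the chosen complement to $Z_i$ inside $M_i$ the differential is a monomorphism, which holds by construction. The homology computation $H_0(M_\blob) = \cok(f)$ and $H_1(M_\blob) = \cok(g)$ is then immediate: for $C_f = (\,\cplx{P}{f}{0}{Q}\,)$ one has $H_0 = \cok f$ and $H_1 = \ker f = 0$, and dually for $C_g^\dag$; homology is additive on direct sums. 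Finally I would prove uniqueness: given two such decompositions $C_f \oplus C_g^\dag \cong C_{f'} \oplus C_{g'}^\dag$, comparing homology in each degree gives $\cok(f) \cong \cok(f')$ and $\cok(g) \cong \cok(g')$; then, since $f$ is a projective presentation of $\cok(f)$ with projective kernel $0$, i.e. a \emph{minimal} projective resolution is not quite what we have, one instead argues that $f$ and $f'$ are both injective maps of projectives with isomorphic cokernel, whence there is an isomorphism of the two-term complexes $[P\xrightarrow{f}Q] \cong [P'\xrightarrow{f'}Q']$ by the horseshoe/comparison lemma for projective resolutions in a hereditary category, and then invoke the cancellation property (c) to identify the projective terms themselves up to isomorphism.

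\textbf{Main obstacle.} The step I expect to require the most care is \emph{uniqueness} — specifically, passing from "isomorphic cokernels" to "isomorphic complexes $C_f$" and then to the isomorphism $M_\blob = C_f \oplus C_g^\dag$ being unique up to isomorphism of complexes. The existence half is essentially the splitting argument and is routine once one notes submodules of projectives are projective. For uniqueness, the subtlety is that a two-term projective complex $[P \xrightarrow{f} Q]$ with $f$ injective is a projective resolution of $\cok(f)$, but $\R$ is not assumed Krull--Schmidt (only $\A$ is, and only $\P$ satisfies cancellation (c)), so I cannot simply quote uniqueness of minimal resolutions. The right approach is: two projective resolutions of the same object are chain-homotopy equivalent, hence (both being two-term complexes of projectives with the differential a monomorphism) the homotopy equivalence can be upgraded to an honest isomorphism of complexes — any null-homotopic endomorphism of such a complex differs from the identity by something invertible — and then condition (c) pins down $P \cong P'$, $Q \cong Q'$. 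I would also need to check that the ambiguity in choosing the splittings $M_i \cong Z_i \oplus B_{i+1}$ does not affect the isomorphism type of the resulting $C_f, C_g^\dag$, which again follows from the comparison lemma. This is the technical heart; everything else (existence, the homology formulas, injectivity of $f,g$) is straightforward diagram chasing.
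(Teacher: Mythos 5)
Your existence argument is essentially the paper's: split the short exact sequences $0\to\ker(d_i)\to M_i\to\im(d_i)\to 0$, which is legitimate because $\im(d_i)$ is a subobject of a projective in a hereditary category and hence projective, and read off $f$ and $g$ as the induced injective maps $\im(d_1)\hookrightarrow\ker(d_0)$ and $\im(d_0)\hookrightarrow\ker(d_1)$ transported along the chosen splittings. That half, together with the homology computation, is fine.

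The uniqueness argument has a genuine gap. You reduce to the statement that $f$ and $f'$ are injective maps of projectives with isomorphic cokernels and claim this forces $C_f\cong C_{f'}$, first via the comparison lemma and then by ``upgrading'' the resulting homotopy equivalence to an isomorphism. Both steps fail. Two projective resolutions of the same object are only chain homotopy equivalent, and for these two-term complexes that is strictly weaker than isomorphism: take $X$ projective, $f\colon 0\to X$ and $f'\colon P\xrightarrow{(\mathrm{id},0)}P\oplus X$; both are injective with cokernel $X$, yet $C_f\not\cong C_{f'}$ since the degree-one terms differ. This is exactly why the paper needs Schanuel's lemma (Lemma \ref{Cohn}): isomorphic cokernels give $C_f\oplus K_{L'}\cong K_L\oplus C_{f'}$ only after adding acyclic summands. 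Your auxiliary claim that a null-homotopic perturbation of the identity is invertible is also false: for $f'$ the inclusion above and $h_0\colon P\oplus X\to P$ the projection, the endomorphism $(\mathrm{id}-h_0f',\,\mathrm{id}-f'h_0)$ of $C_{f'}$ is homotopic to the identity but is not an isomorphism. Condition (c) cannot rescue this, since the graded pieces of $C_f$ and $C_{f'}$ are genuinely non-isomorphic. The correct uniqueness argument never passes through the cokernels alone; it uses that both decompositions are decompositions of the \emph{same} $M_\blob$. In any decomposition $M_\blob\cong C_{f'}\oplus C_{g'}^\dag$ one checks that $\ker(d_0)$ is the degree-zero term of $C_{f'}$, that $\ker(d_1)$ is the degree-one term of $C_{g'}^\dag$, and that $\im(d_1)=\im(f')$, $\im(d_0)=\im(g')$; hence $C_{f'}\cong(\im(d_1)\hookrightarrow\ker(d_0))$ and $C_{g'}\cong(\im(d_0)\hookrightarrow\ker(d_1))$ as complexes, and these depend only on $M_\blob$.
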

\begin{proof}
Consider the short exact sequences
\[0\lra \ker(d_1)\lRa{i} M_1\lRa{p} \im(d_1)\lra 0,\]\[ 0\lra \ker(d_0)\lRa{j} M_0\lRa{q} \im(d_0)\lra 0.\]
Since the the category $\R$  is hereditary by assumption, 
all the objects appearing in these sequences are projective. 
Thus the sequences split, and we can find morphisms
\[r\colon M_1\to \ker(d_1), \quad k\colon \im(d_0)\to M_0, \quad
l\colon \im(d_1)\to M_1, \quad s\colon M_0\to \ker(d_0)
\]
such that $r\circ i=\id$, $q\circ k=\id$,  $p\circ l=\id$, and $s\circ j=\id$. 
This yields the following split exact sequence of morphisms of complexes
\[
\begin{tikzcd}
\ker(d_1)\ar[d, "i" '] \ar[r, "0", shift left] & \ar[l, "m", shift left]	 \im(d_0) \ar[d, "k"]
\\
M_1 \ar[d, "p" ']  \ar[r, "d_1", shift left]	& \ar[l, "d_0", shift left]  M_0 \ar[d, "s"]
\\
\im(d_1) \ar[r, "m'", shift left] & \ar[l, "0", shift left]	 \ker(d_0)
\end{tikzcd}
\]
where $m, m'$ denote the obvious inclusions. (Note that $d_0=i\circ m\circ q$ and $d_1=j\circ m'\circ p$.)
The desired decomposition of $M_\blob$ is thus given by 
setting: $f=m\circ q$,\, $g=-m' \circ l$. 

Now suppose there is an isomorphism $M_\blob \cong C_{f'}\oplus C_{g'}^\dag$ for some other pair $f',g'$ of injective morphisms in $\P$. 
Then one can easily define corresponding isomorphisms of complexes $C_{f'} \cong C_f$ and $C_{g'} \cong C_g$, 
showing uniqueness. 
\end{proof}

Given $M_\blob\in \C(\P)$, 
it will be convenient to write the decomposition in Lemma \ref{decomp} as 
\begin{equation*}
M_\blob = M_\blob^+ \oplus M_\blob^-
\end{equation*} 
where $M_\blob^+ = C_f$ and $M_\blob^- = C_g^*$. 
Let the sign map \[\varepsilon: \Zt \to \{+,-\}\] be defined by 
$\varepsilon(0)=+$, $\varepsilon(1)=-$.

\begin{lem}\label{compare}
Let $M_\blob, N_\blob \in \C(\P)$. Then there is an isomorphism 
\begin{align*}
\Hom_{\C(\R)}(M_\blob, N_\blob) 
\, \cong \  
 \Hom_{\C(\R)}(H_\blob(M_\blob), H_\blob(N_\blob))
  \oplus
  \Big \{ 
  \bigoplus_{i,j\in\Zt} 
\Hom_\R(M_i^{\varepsilon(j)}, N_{j+1}^{\varepsilon(i)} )  
\Big \}
\end{align*} 
%
of $\k$-vector spaces. 
\end{lem}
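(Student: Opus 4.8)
The plan is to reduce everything to the canonical decomposition $M_\blob = M_\blob^+\oplus M_\blob^-$ (and likewise for $N_\blob$) furnished by Lemma \ref{decomp}, so that a morphism of complexes becomes a matrix of $\R$-morphisms between the building blocks $C_f$, $C_g^\dag$. Writing $M_\blob^+ = C_f$ and $M_\blob^- = C_g^\dag$ with $f\colon P_0\to P_0'$, $g\colon Q_1\to Q_1'$ injective and $H_0(M_\blob)=\cok f$, $H_1(M_\blob)=\cok g$, I would first record the four types of hom-spaces $\Hom_{\C(\R)}(C_f, C_{f'})$, $\Hom_{\C(\R)}(C_f, C_{g'}^\dag)$, $\Hom_{\C(\R)}(C_g^\dag, C_{f'})$, $\Hom_{\C(\R)}(C_g^\dag, C_{g'}^\dag)$. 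Each is a subspace of a product of two $\Hom_\R$-spaces cut out by one commuting-square condition involving the differential, and since all objects in sight are projective the relevant $\Ext^1$'s vanish, so these spaces split off cleanly.

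The key computation is to show that, for a single block, $\Hom_{\C(\R)}(C_f, C_{f'})$ fits in a short exact sequence whose kernel is $\Hom_\R(\cok f,\cok f')=\Hom_{\C(\R)}(H_\blob(C_f),H_\blob(C_{f'}))$ and whose cokernel (which splits because of projectivity) is the "off-diagonal" term $\Hom_\R(M_0^+, N_1^+)$ in the notation of the statement — i.e. the $(i,j)=(0,1)$, same-sign contribution. Concretely, a chain map $C_f\to C_{f'}$ is a pair $(s_0\colon P_0\to P_0',\, s_0'\colon P_0'\to P_0'{}')$ — wait, rather $(s_0, s_1)$ in the $\Zt$-grading — with $f' s_0 = s_1 f$ in degree... here one must be careful with the grading: $C_f$ has $P$ in degree $0$ and $Q$ in degree $1$ with $d_1 = f$, $d_0=0$. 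So a morphism $C_f\to C_{f'}$ is $(s_0,s_1)$ with $s_1 d_1 = d_1' s_0$, i.e. $s_1 f = f' s_0$, and the degree-$0$ condition is vacuous. Using $\cok f$ projective-presented one shows $s_1$ is determined by $s_0$ up to a map factoring through, and the quotient by the "determined" part is $\Hom_\R(P_0, P_1')=\Hom_\R(M_0^+, N_1^+)$, while the constrained part maps onto $\Hom_\R(\cok f,\cok f')$ with the splitting again from projectivity. The mixed blocks $C_f\to C_{g'}^\dag$ contribute $\Hom_\R(M_0^+, N_1^-)$ and have no homology component (the homology of $C_f$ is concentrated in degree $0$, that of $C_{g'}^\dag$ in degree $1$, and $\Hom_{\C(\R)}$ of complexes concentrated in opposite degrees vanishes), and symmetrically for the other mixed and the $C_g^\dag\to C_{g'}^\dag$ blocks. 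Assembling the four blocks, the homology pieces combine into $\Hom_{\C(\R)}(H_\blob(M_\blob), H_\blob(N_\blob)) = \Hom_\R(H_0,H_0')\oplus\Hom_\R(H_1,H_1')$, and the sixteen remaining summands are exactly $\bigoplus_{i,j\in\Zt}\Hom_\R(M_i^{\varepsilon(j)}, N_{j+1}^{\varepsilon(i)})$ once one checks the index bookkeeping: the block $M_\blob^{\varepsilon(j)}\to N_\blob^{\varepsilon(i)}$ contributes a map out of $M_i^{\varepsilon(j)}$ into the degree-$(j+1)$ part $N_{j+1}^{\varepsilon(i)}$ of the target, matching the displayed indexing.

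The main obstacle, and the step I would spend the most care on, is the bookkeeping of the $\Zt$-grading together with the sign map $\varepsilon$: one must verify that after the canonical decomposition the single commuting-square constraint for each of the four block-types removes precisely one $\Hom_\R$-factor and produces precisely the homology factor (for the two "diagonal" block-types) or nothing (for the two "off-diagonal" ones), and that the surviving free factors are indexed exactly as $\Hom_\R(M_i^{\varepsilon(j)}, N_{j+1}^{\varepsilon(i)})$ — in particular that the shift by one in the target degree and the swapping of roles of $i$ and $j$ in the signs come out correctly. The only non-formal input is heredity of $\R$ (giving projectivity of kernels/images and hence all the needed splittings and $\Ext^1$-vanishings), which is assumption (b); everything else is linear algebra over $\k$ with the splittings making every relevant short exact sequence split, so the isomorphism is one of $\k$-vector spaces as claimed. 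A clean way to organize the write-up is to prove the block-level statement as a sub-lemma for $\Hom_{\C(\R)}(C_f,C_{f'})$ and its three variants, then take the direct sum over the four blocks of each of $M_\blob$ and $N_\blob$.
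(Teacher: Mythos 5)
Your proposal follows essentially the same route as the paper: decompose $M_\blob$ and $N_\blob$ by Lemma \ref{decomp} into $C_f\oplus C_g^\dag$, compute the four block Hom-spaces --- a short exact sequence $0\to\Hom_\R(Q,P')\to\Hom_{\C(\R)}(C_f,C_{f'})\to\Hom_\R(\cok f,\cok f')\to 0$ for the diagonal blocks and a single $\Hom_\R$-space for each mixed block --- and assemble. The plan is correct, but the bookkeeping you yourself flag as delicate contains slips you would need to repair in a final write-up: in the paper's convention $P$ sits in degree $1$ of $C_f$, not degree $0$; in the exact sequence the homology term is the quotient and $\Hom_\R(Q,P')$ the subobject, not the other way around (immaterial for a vector-space isomorphism, since everything splits, but worth stating correctly); the block $C_f\to C_{g'}^\dag$ contributes $\Hom_\R(M_1^+,N_1^-)$, not $\Hom_\R(M_0^+,N_1^-)$, which is in fact what your (correct) general indexing rule yields for $(i,j)=(1,0)$; and the residual direct sum has four summands, not sixteen.
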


\begin{proof}

First suppose that 
 $P\xrightarrow{\,f} Q$, $P' \xrightarrow{\,g} Q'$ 
is a pair of injective morphisms  in $\P$. 
We note that there is a short exact sequence 
\begin{equation}\label{hom1}
 0 \lra \Hom_\R(Q,P') 
\lra \Hom_{\C(\R)}(C_f, C_g)
\lra \Hom_{\R}(X,Y) \lra 0 
\end{equation}
for $X=\cok f$, $Y=\cok g$.  
One may also check directly that 
\begin{equation}\label{hom2}
 \Hom_{\C(\R)}(C_f, C_g^\dag) \cong \Hom_\R(P,Q').
 \end{equation}

The decomposition of $\Hom_{\C(\R)}(M_\blob, N_\blob)$ 
now follows easily
 by applying Lemma \ref{decomp} to both 
 $M_\blob$ 
and  $N_\blob$, respectively, and by 
 using the involutive shift functor $*$ together with 
 \eqref{hom1} and \eqref{hom2}. 
\end{proof}

\subsection{Acyclic complexes} \label{ss:acyclic}

Given a projective object $P\in \P$, there are associated acyclic complexes  
\begin{equation}\label{def:complex2}
K_P := 
(\begin{tikzcd}[cramped, column sep=small] P \ar[r, "\text{id}", shift left] & P \ar[l, "0", shift left] 
\end{tikzcd}), \ \quad \
K^\dag_P := 
(\begin{tikzcd}[cramped, column sep=small] P \ar[r, "0", shift left] & P \ar[l, "-\text{id}", shift left] 
\end{tikzcd}).
\end{equation}
Notice that $M_\blob \in \C(\P)$ is acyclic precisely if $M_\blob \cong 0$ in $\Ho(\P)$.  

\begin{lem}\label{acyclic}
If  $M_\blob\in \C(\P)$  is an acyclic complexes of projectives,  
then there exist objects $P,Q\in \P$, unique up to isomorphism, such that 
$M_\blob \cong K_P\oplus K_Q^\dag$. 
\end{lem}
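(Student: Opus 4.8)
The statement is the $\Zt$-graded analogue of the classical fact that an acyclic complex of projectives in a hereditary category is a direct sum of ``elementary'' acyclic pieces, and the natural route is to apply Lemma \ref{decomp} and then analyse what acyclicity forces. First I would invoke Lemma \ref{decomp} to write $M_\blob = C_f \oplus C_g^\dag$ for injective morphisms $f,g$ in $\P$, with $H_0(M_\blob) = \cok(f)$ and $H_1(M_\blob) = \cok(g)$. Acyclicity of $M_\blob$ means $H_0(M_\blob) = H_1(M_\blob) = 0$, i.e.\ $\cok(f) = \cok(g) = 0$, so both $f$ and $g$ are epimorphisms as well as monomorphisms, hence isomorphisms. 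Setting $P$ to be the source of $f$ and $Q$ the source of $g$, an isomorphism $f\colon P \to Q'$ gives an isomorphism of complexes $C_f \cong K_P$ (conjugate the identity complex by $(\id_P, f^{-1})$, or equivalently by $(f,\id)$ in the other direction), and likewise $C_g^\dag \cong K_Q^\dag$. This yields $M_\blob \cong K_P \oplus K_Q^\dag$.

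For uniqueness up to isomorphism, suppose also $M_\blob \cong K_{P'} \oplus K_{Q'}^\dag$. I would read off $P$ and $Q$ intrinsically from $M_\blob$: in the decomposition, $K_P$ contributes $P$ in degree $0$ and $P$ in degree $1$ but with $M_0 \to M_1$ the zero part, while $K_Q^\dag$ contributes $Q$ in both degrees with $M_1 \to M_0$ zero. Concretely, $P \cong M_0/\im(d_0^{-1}\text{-complement})$ — cleaner: since $f = m\circ q$ in the proof of Lemma \ref{decomp} is now an iso, $M_0^+ = C_f$ has degree-$0$ term $Q' \cong P$, and in the acyclic case $\im(d_1) = \ker(d_0)$ and $\im(d_0) = \ker(d_1)$ with $M_0 \cong \ker(d_0) \oplus \im(d_0)$, $M_1 \cong \ker(d_1)\oplus\im(d_1)$, so I can take $P \cong \im(d_0) \cong \ker(d_1)$ and $Q \cong \im(d_1) \cong \ker(d_0)$ as the canonical choices. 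These are determined by $M_\blob$ up to isomorphism, so any two such decompositions have isomorphic $P$'s and $Q$'s; then condition (c) of Section \ref{ass} (cancellation of projective summands) is not even needed here since we identify $P$ and $Q$ directly, though one could alternatively cite (c) after matching total terms.

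The only genuinely delicate point is making the identification $C_f \cong K_P$ precise at the level of $\C(\R)$-morphisms and checking it is an isomorphism of complexes, but this is routine: a morphism of complexes $K_P \to C_f$ is a commuting square, and $(\id_P, f)\colon K_P \to C_f$ (using that the horizontal maps are $\id_P$ and $f$ on top, $0$ and $0$ on bottom — one must double-check which slot carries $f$ versus $\id$ matches the convention in \eqref{eq:comp} and \eqref{def:complex2}) is invertible with inverse $(\id_P, f^{-1})$. I expect no real obstacle; the main thing to get right is bookkeeping of the two ``halves'' $M_\blob^+$ and $M_\blob^-$ and the shift functor $\dag$, ensuring the sign on the differential in $K_Q^\dag$ matches the sign coming out of Lemma \ref{decomp} (where $g = -m'\circ l$), which it does by the definition of $C_g^\dag$ in \eqref{eq:comp}.
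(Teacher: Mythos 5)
Your proposal is correct and follows essentially the same route as the paper: apply Lemma \ref{decomp}, observe that acyclicity forces $f$ and $g$ to be isomorphisms (so $C_f\cong K_P$ and $C_g^\dag\cong K_Q^\dag$), and deduce uniqueness of $P,Q$ from the uniqueness in that decomposition. Your intrinsic identification of $P$ and $Q$ via kernels and images of the differentials is a slightly more explicit way to see uniqueness than the paper's one-line appeal to Lemma \ref{decomp} (just beware that with the paper's conventions the roles of $\im(d_0)$ and $\im(d_1)$ are swapped relative to your labeling), but the substance is identical.
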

\begin{proof}
If $M_\blob$ is acyclic, then by Lemma \ref{decomp}
we have  
$M_\blob = C_{f} \oplus C_{g}^\dag$ 
for some isomorphisms 
$f:P\cong P'$ and $g:Q\cong Q'$ of projectives. 
It follows that  $M_\blob\cong K_P \oplus K_Q^\dag$. 
Since the complexes
 $K_P$ and $K_Q^\dag$ are unique up to isomorphism, 
the objects $P,Q$ are unique up to isomorphism as well. 
\end{proof}

\begin{lem}\label{Cohn}
Suppose that 
 $P\xrightarrow{\,f} Q$ and $P' \xrightarrow{\,f'} Q'$ 
are injective morphisms  in $\P$.  
Then 
$\cok f\cong \cok f'$ in $\R$, 
if and only if 
there is an isomorphism 
\begin{equation*} 
C_f\oplus K_{L'} \, \cong \,  K_{L } \oplus C_{f'} 
\end{equation*}
of complexes in $\C(\P)$, 
for some objects $L,L' \in \P$.
\end{lem}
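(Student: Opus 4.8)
The plan is to prove both directions separately, using Lemma~\ref{decomp} and Lemma~\ref{acyclic} to normalize the complexes involved, and to reduce the ``if'' direction to a cancellation argument using axiom (c).

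For the ``only if'' direction, suppose $\cok f \cong \cok f'$ in $\R$, say $X := \cok f \cong \cok f'$. Then $C_f$ and $C_{f'}$ are both two-term complexes of projectives concentrated in degrees $1,0$ with differential an injection and with homology $H_0 = X$, $H_1 = 0$. The idea is that $C_f$ and $C_{f'}$ become isomorphic after adding suitable contractible summands $K_L$, $K_{L'}$. Concretely, I would first observe that it suffices to produce an isomorphism $C_f \oplus K_{L'} \cong K_L \oplus C_{f'}$ after checking that both sides are complexes of projectives with isomorphic homology; one then hopes to invoke a uniqueness statement. The cleanest route: write $f : P \to Q$ and $f' : P' \to Q'$; since $\cok f \cong \cok f'$, pulling back/pushing out along projective covers or using that $\R$ is hereditary, one can build a commutative ladder relating the two short exact sequences $0 \to P \to Q \to X \to 0$ and $0 \to P' \to Q' \to X \to 0$. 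By the horseshoe/Schanuel-type argument for hereditary categories, $P \oplus Q' \cong P' \oplus Q$ (Schanuel's lemma), and moreover one can choose the isomorphisms so that the differentials match up, i.e. $C_f \oplus C_{\id_{P'}} \cong C_{f'} \oplus C_{\id_P}$, where $C_{\id_L} = K_L$. Taking $L = P$, $L' = P'$ gives the desired isomorphism. The point where care is needed is ensuring the block isomorphism $P \oplus Q' \cong P' \oplus Q$ can be realized as an isomorphism of the \emph{complexes} $C_f \oplus K_{P'}$ and $K_P \oplus C_{f'}$, not merely of the underlying graded objects; this follows by writing down the standard elementary-matrix row/column operations that diagonalize a morphism once one summand is an identity, exactly as in the usual proof that stably-equivalent-via-contractibles complexes are isomorphic.

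For the ``if'' direction, suppose $C_f \oplus K_{L'} \cong K_L \oplus C_{f'}$ for some $L, L' \in \P$. Apply the homology functor $H_\blob$: since $K_L$ and $K_{L'}$ are acyclic, $H_\blob(C_f \oplus K_{L'}) \cong H_\blob(C_f)$ has $H_0 = \cok f$ and $H_1 = 0$, while $H_\blob(K_L \oplus C_{f'})$ has $H_0 = \cok f'$ and $H_1 = 0$. Since $H_\blob$ takes isomorphisms to isomorphisms, we get $\cok f \cong \cok f'$ directly. This direction is essentially immediate.

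The main obstacle is the ``only if'' direction — specifically, upgrading the object-level identity $\cok f \cong \cok f'$ to a \emph{complex} isomorphism after stabilizing by contractibles. The Schanuel's lemma argument gives the needed isomorphism of underlying objects in each degree, but one must check compatibility with the differentials; I expect this to be handled by choosing, via the splitting of the relevant short exact sequences (which split because $\R$ is hereditary, as used in Lemma~\ref{decomp}), explicit sections that make the ladder commute, and then absorbing the resulting ``correction'' into the identity block $K_L$. Axiom (c) is not needed for existence here but would be the tool if one wanted to pin down $L, L'$ uniquely (which the statement does not ask for). I would also double-check the sign conventions coming from the shift functor $*$, though since no $C_g^\dag$ terms appear in this lemma the signs play no essential role.
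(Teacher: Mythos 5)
Your proposal is correct and takes essentially the same route as the paper, whose entire proof is the remark that the statement is a reformulation of Schanuel's lemma together with a citation to Cohn's Theorem 0.5.3 (stable association of maps with isomorphic cokernels). Your choice $L=P$, $L'=P'$, the strengthened Schanuel argument upgrading $P\oplus Q'\cong P'\oplus Q$ to an isomorphism of complexes, and the homology argument for the converse are exactly the intended content.
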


\begin{proof}
This is a reformulation of Schanuel's lemma.  We refer to \cite[Theorem 0.5.3]{Cohn} for the proof. 
\end{proof}

\begin{prop}\label{exchange}
Suppose $M_\blob, M_\blob' \in \C(\P)$.  Then 
%
%
there exists an isomorphism 
\[ M_\blob \oplus K_\blob' \, \cong \,  K_\blob \oplus M_\blob' \]
for some acyclic complexes $K_\blob, K_\blob' \in \C(\P)$,  
if and only if 
$H_\blob(M_\blob)\cong H_\blob(M_\blob')$ in $\C(\R)$. 
\end{prop}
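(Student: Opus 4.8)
The plan is to reduce the statement to the decomposition of Lemma \ref{decomp} and the classification of acyclic complexes in Lemma \ref{acyclic}, using Lemma \ref{Cohn} as the crucial bridge. First I would handle the easy direction. If $M_\blob \oplus K_\blob' \cong K_\blob \oplus M_\blob'$ with $K_\blob, K_\blob'$ acyclic, then taking homology is additive and kills the acyclic summands (as $H_\blob(K_P)=H_\blob(K_Q^\dag)=0$), so $H_\blob(M_\blob)\cong H_\blob(M_\blob')$ immediately. The content is in the converse.

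For the converse, write the canonical decompositions $M_\blob = C_f\oplus C_g^\dag$ and $M_\blob' = C_{f'}\oplus C_{g'}^\dag$ provided by Lemma \ref{decomp}, with $f,g,f',g'$ injective morphisms in $\P$ and
\[
H_0(M_\blob)=\cok f,\quad H_1(M_\blob)=\cok g,\quad H_0(M_\blob')=\cok f',\quad H_1(M_\blob')=\cok g'.
\]
The hypothesis $H_\blob(M_\blob)\cong H_\blob(M_\blob')$ in $\C(\R)$ unwinds (the differentials being zero) to $\cok f\cong\cok f'$ and $\cok g\cong\cok g'$ in $\R$. Now apply Lemma \ref{Cohn} to the pair $f,f'$: there are $L,L'\in\P$ with $C_f\oplus K_{L'}\cong K_L\oplus C_{f'}$. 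Applying the shift functor $*$ to Lemma \ref{Cohn} for the pair $g,g'$ (or equivalently restating it) gives $M,M'\in\P$ with $C_g^\dag\oplus K_{M'}^\dag\cong K_M^\dag\oplus C_{g'}^\dag$. Taking the direct sum of these two isomorphisms yields
\[
(C_f\oplus C_g^\dag)\oplus(K_{L'}\oplus K_{M'}^\dag)\ \cong\ (K_L\oplus K_M^\dag)\oplus(C_{f'}\oplus C_{g'}^\dag),
\]
that is, $M_\blob\oplus K_\blob'\cong K_\blob\oplus M_\blob'$ with $K_\blob := K_L\oplus K_M^\dag$ and $K_\blob' := K_{L'}\oplus K_{M'}^\dag$, both acyclic by Lemma \ref{acyclic}. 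This completes the argument.

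I do not anticipate a serious obstacle; the only point requiring care is the bookkeeping for the shifted version of Lemma \ref{Cohn}, i.e. checking that applying the involution $*$ to the isomorphism $C_g\oplus K_{L'}\cong K_L\oplus C_{g'}$ indeed produces $C_g^\dag\oplus K_{L'}^\dag\cong K_L^\dag\oplus C_{g'}^\dag$ with $K_P^\dag$ still acyclic — this is immediate since $*$ is an additive self-equivalence of $\C(\P)$ carrying $K_P$ to $K_P^\dag$ and $C_g$ to $C_g^\dag$. One should also note that the identifications $H_\blob(M_\blob)\cong H_\blob(M_\blob')$ as objects of $\C(\R)$ force the componentwise isomorphisms of cokernels, which is where the hypothesis is actually used.
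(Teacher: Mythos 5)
Your proof is correct and follows exactly the route the paper indicates: decompose via Lemma \ref{decomp}, apply Lemma \ref{Cohn} to the degree-$0$ parts and its $\dag$-image to the degree-$1$ parts, and sum the resulting isomorphisms. The paper's own proof is just a one-line citation of these lemmas, so your write-up is a faithful (and more detailed) version of the same argument.
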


\begin{proof}
This follows directly from Lemmas \ref{decomp}
and \ref{Cohn}, and by applying $\dag$ to the latter.  
\end{proof}

\subsection{Extensions of complexes} 
Given any morphism $s_\blob: M_\blob \to N_\blob$ of complexes in $\C(\P)$, 
we can form a corresponding exact sequence 
\[ 0 \lra N_\blob^\dag \lra \mathrm{Cone}(s_\blob) \lra M_\blob \lra 0\] 
of complexes in $\C(\P)$, 
where the middle term is defined by 
\[
\mathrm{Cone}(s_\blob) = 
(\begin{tikzcd}[cramped, column sep=scriptsize] N_0 \oplus M_1 \ar[r, "d_1", shift left] & N_1 \oplus M_0 \ar[l, "d_0", shift left] 
\end{tikzcd}) 
\]
with 
\[
d_0 := \begin{bmatrix} -d_1^N & s_0 \\ 0 & d^M_0 \end{bmatrix}, \qquad \quad
d_1 := \begin{bmatrix} -d_0^N & s_1 \\ 0 & d^M_1 \end{bmatrix}. 
\] 
This leads to the following result. 

\begin{lem}[\cite{Br}]\label{ext_hom} 
Let $M_\blob, N_\blob \in \C(\P)$.  The mapping $s_\blob \mapsto \mathrm{Cone}(s_\blob)$ 
defines an isomorphism
\[\Hom_{\Ho(\R)}(M_\blob, N_\blob^\dag) \, \cong \ 
\Ext^1_{\C(\R)}(M_\blob, N_\blob).\]
\end{lem}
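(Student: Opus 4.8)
The statement to prove is Lemma~\ref{ext_hom}: the cone construction gives an isomorphism $\Hom_{\Ho(\R)}(M_\blob, N_\blob^\dag) \cong \Ext^1_{\C(\R)}(M_\blob, N_\blob)$.
My plan is to produce the inverse map explicitly and verify the two compositions are identities, using the exact sequence $0 \to N_\blob^\dag \to \mathrm{Cone}(s_\blob) \to M_\blob \to 0$ already exhibited just above.
First I would recall that $\C(\R) = \C_\Zt(\R)$ is an abelian (indeed exact) category, so $\Ext^1_{\C(\R)}(M_\blob, N_\blob)$ is the usual group of Yoneda extensions with a Baer sum.
I would then check that the assignment $s_\blob \mapsto \mathrm{Cone}(s_\blob)$, equipped with the obvious inclusion $N_\blob^\dag \hookrightarrow \mathrm{Cone}(s_\blob)$ and projection $\mathrm{Cone}(s_\blob) \twoheadrightarrow M_\blob$, does define a short exact sequence in $\C(\R)$: one must verify that the proposed $d_0, d_1$ satisfy $d_i d_{i+1} = 0$ (a direct $2\times 2$ block computation using $s_{i+1} d_i^M = d_i^N s_i$ and $d^M d^M = d^N d^N = 0$), and that the two maps are morphisms of complexes forming a componentwise-split exact sequence in each degree.

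Next I would show the map is well-defined on homotopy classes: if $s_\blob$ and $s_\blob'$ are homotopic via $h_i \colon M_i \to N_{i+1}$, I construct an isomorphism $\mathrm{Cone}(s_\blob) \cong \mathrm{Cone}(s_\blob')$ commuting with the inclusion and projection, given in each degree by the elementary block matrix $\begin{bmatrix} \id & \pm h \\ 0 & \id \end{bmatrix}$; checking this intertwines the two differentials is again a routine block computation that uses exactly the homotopy relation.
Conversely, to build the inverse, given an extension $0 \to N_\blob \xrightarrow{\iota} E_\blob \xrightarrow{\pi} M_\blob^\dag \to 0$ (after applying $\dag$, or equivalently working with $N_\blob^\dag$ directly), I note that $\pi$ is a degreewise-split epimorphism since all terms $M_i, N_i \in \P$ are projective, so $E_\blob$ is degreewise isomorphic to $N_i^\dag \oplus M_i$; transporting the differential of $E_\blob$ through such a splitting, the lower-triangular off-diagonal blocks assemble into a morphism of complexes $M_\blob \to N_\blob^\dag$ — this is essentially reading off the ``connecting'' data, and this is the classical description of $\Ext^1$ in an abelian category via degreewise-split sequences. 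I would then check this is independent of the chosen splitting up to homotopy (different splittings differ by a map $M_i \to N_i^\dag$, which translates precisely into a homotopy), so it yields a well-defined class in $\Hom_{\Ho(\R)}(M_\blob, N_\blob^\dag)$.

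Finally I would verify the two round-trip composites: starting from $s_\blob$, forming $\mathrm{Cone}(s_\blob)$, and reading off the off-diagonal block of its differential recovers $s_\blob$ on the nose (by inspection of the displayed formulas for $d_0, d_1$); and starting from an extension $E_\blob$, choosing a splitting, forming the cone of the resulting map, and comparing with $E_\blob$ gives an isomorphism of extensions via the chosen splitting. Additivity (compatibility with Baer sum on one side and addition of morphisms on the other) can be checked directly or simply cited, since this lemma is attributed to \cite{Br} and the only new point here is that everything goes through verbatim with $\P \subset \R$ in place of the category of projectives over a finite-dimensional hereditary algebra; the essential inputs are only that $\R$ is hereditary with enough projectives and that projectives split epimorphisms, all of which hold under assumptions (a)--(e).

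The main obstacle I anticipate is bookkeeping rather than conceptual: getting the signs and the placement of the $\dag$/shift consistent throughout (the differential of $N_\blob^\dag$ carries a sign, and the blocks in $d_0, d_1$ mix the shifted and unshifted differentials), and making sure the homotopy-invariance argument in both directions matches up the same off-diagonal block. Since the category $\C(\R)$ is honestly abelian (not merely exact), there are no subtleties about which class of extensions to use, so I would not expect any difficulty beyond careful matrix manipulation; the proof is really a transcription of \cite[Lemma in \S]{Br} into the present setting, and I would keep it brief accordingly.
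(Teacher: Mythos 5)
Your proposal is correct and is essentially the standard argument (degreewise splitting via projectivity of the $M_i$, homotopies corresponding to changes of splitting, and the two round-trip checks); the paper itself gives no proof of this lemma, simply citing \cite{Br}, where exactly this argument appears. The one point worth stating explicitly in a write-up is the sign bookkeeping you already flag: the off-diagonal block $t_i\colon M_i\to N_{i+1}=(N^\dag_\blob)_{i+1}$ extracted from a splitting satisfies $d^N_{i+1}t_i+t_{i+1}d^M_i=0$, which is precisely the condition for $t_\blob$ to be a chain map $M_\blob\to N^\dag_\blob$ because the differential of $N^\dag_\blob$ carries a minus sign.
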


We also have the following.

\begin{lem}\label{ext_hom2}
Suppose 
$P\xrightarrow{f}Q$, 
$P'\xrightarrow{g}Q'$  
are injective morphisms in the category $\P$. 
Let $X,Y \in \R$ denote the cokernels: 
$X= \cok f$, $Y=\cok g$.  
Then the following hold. 
\begin{enumerate}[(i)]
\item 
$\Hom_{\Ho(\R)}(C_{f}, C_{g}) \cong 
\Hom_{\R}(X, Y)$; \smallskip

\item 
$\Hom_{\Ho(\R)}(C_{f}, C_{g}^\dag) \cong 
\Ext^1_{\R}(X, Y)$.
\end{enumerate}
\end{lem}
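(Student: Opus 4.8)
The plan is to reduce both parts to computations in $\C(\R)$ and $\Ho(\R)$ that we have essentially already set up. For part (i), I would start from the homotopy category: since $C_f$ and $C_g$ are honest complexes of projectives, $\Hom_{\Ho(\R)}(C_f, C_g)$ is the quotient of $\Hom_{\C(\R)}(C_f, C_g)$ by the subspace of null-homotopic maps. A homotopy for a map $C_f \to C_g$ is a morphism $h\colon Q \to P'$ (the only nonzero component, since the complex is supported in the two $\Zt$-degrees), and the null-homotopic maps are exactly those of the form $(g h, h f)$ coming from such an $h$. Comparing this with the short exact sequence \eqref{hom1}, namely
\[
0 \lra \Hom_\R(Q, P') \lra \Hom_{\C(\R)}(C_f, C_g) \lra \Hom_\R(X, Y) \lra 0,
\]
I would check that the image of $\Hom_\R(Q,P')$ under the connecting inclusion is precisely the space of null-homotopic morphisms; granting that, passing to the quotient gives $\Hom_{\Ho(\R)}(C_f, C_g) \cong \Hom_\R(X,Y)$, which is (i).

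For part (ii), I would invoke Lemma \ref{ext_hom} with the roles set so that $M_\blob = C_f$ and $N_\blob^\dag = C_g$, i.e. $N_\blob = C_g^\dag$ (note $(C_g^\dag)^\dag \cong C_g$ since $\dag$ is an involution). Lemma \ref{ext_hom} then gives
\[
\Hom_{\Ho(\R)}(C_f, C_g) \cong \Ext^1_{\C(\R)}(C_f, C_g^\dag).
\]
Wait — that is not quite what (ii) asks; (ii) wants $\Hom_{\Ho(\R)}(C_f, C_g^\dag)$, so instead I would apply Lemma \ref{ext_hom} with $M_\blob = C_f$, $N_\blob = C_g$, giving $\Hom_{\Ho(\R)}(C_f, C_g^\dag) \cong \Ext^1_{\C(\R)}(C_f, C_g)$. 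So the real content of (ii) is the identification $\Ext^1_{\C(\R)}(C_f, C_g) \cong \Ext^1_\R(X,Y)$. I would obtain this from the short exact sequence of complexes $0 \to P \to Q \to X \to 0$ (viewed appropriately) that presents $X$, together with the analogous presentation of $Y$: the complex $C_g$ is quasi-isomorphic to $Y$ concentrated in degree $0$, and $C_f$ to $X$, and one shows the functor $C_{(-)}$ is compatible with extension groups. Concretely, I would either apply $\Hom_{\C(\R)}(-, C_g)$ to the short exact sequence $0 \to K_{f}\text{-type complex} \to C_f \to \dots$, or more cleanly use Lemma \ref{compare} to split $\Hom_{\C(\R)}(C_f, C_g^\dag)$ as $\Hom_{\C(\R)}(H_\blob C_f, H_\blob C_g^\dag) \oplus (\text{projective-Hom terms})$, observe the homology term is $\Hom_\R(X, Y)$ in degree-mismatch so actually vanishes or reshuffles, and then feed the result through Lemma \ref{ext_hom}.

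Actually, the cleanest route, which I would ultimately adopt, is: combine \eqref{hom2}, which says $\Hom_{\C(\R)}(C_f, C_g^\dag) \cong \Hom_\R(P, Q')$, with Lemma \ref{ext_hom} in the form $\Hom_{\Ho(\R)}(C_f, C_g^\dag) \cong \Ext^1_{\C(\R)}(C_f, C_g)$ — no, this still requires knowing $\Ext^1_{\C(\R)}(C_f, C_g)$. So the genuine argument is: take the projective presentation $0 \to P \xrightarrow{g} Q' \to Y \to 0$ and apply $\Hom_\R(X, -)$; since $X$ has projective dimension $\le 1$ (everything is hereditary), this yields $0 \to \Hom(X,P) \to \Hom(X,Q') \to \Hom(X,Y) \to \Ext^1(X,P) \to \Ext^1(X,Q') \to \Ext^1(X,Y) \to 0$, and $\Ext^1(X,Q')=0$ as $Q'$ — wait, $Q'$ is projective, not injective. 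Let me instead use that in a hereditary category, $\Ext^1(X,Y)$ is the cokernel of $\Hom(P', Q) \to \Hom(P', \text{nothing})$... I would compute $\Ext^1_\R(X,Y)$ as the cokernel of $g_* \colon \Hom_\R(P', P) \to \Hom_\R(P', Q)$ composed appropriately from the presentation $0 \to P' \xrightarrow{g} Q' \to Y \to 0$ — no. The correct statement: from $0 \to P' \to Q' \to Y \to 0$, applying $\Hom_\R(-, Y)$... I am overcomplicating. Given the hereditary hypothesis, the standard identity is $\Ext^1_\R(X,Y) = \cok\big(\Hom_\R(Q, Y') \to \Hom_\R(P, Y')\big)$ using a presentation $0 \to P \xrightarrow{f} Q \to X \to 0$ and $Y = Y$; then a diagram chase matching this cokernel against the homotopy classes $\Hom_{\Ho(\R)}(C_f, C_g^\dag)$ (where the maps $C_f \to C_g^\dag$ have single component $Q \to Q'$... ) finishes part (ii).

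The main obstacle I expect is exactly this last bookkeeping in part (ii): tracking which morphism components survive, which are killed by homotopy, and matching them with a concrete presentation of $\Ext^1_\R(X,Y)$ in the hereditary setting — essentially the same $5$-lemma/snake-lemma chase as in Bridgeland's \cite{Br}, but one has to be careful that all objects stay in $\P$ (guaranteed by heredity, as in Lemma \ref{decomp}) and that the isomorphisms are natural enough to be useful downstream. Part (i) is routine once \eqref{hom1} is in hand; part (ii) is where the real (if standard) work lies, and I would lean on Lemma \ref{ext_hom} and Lemma \ref{compare} to minimize the hand computation.
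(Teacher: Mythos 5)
Your argument is correct in outline, but it takes a genuinely different route from the paper. The paper disposes of both parts in two lines by invoking Lemma \ref{root}: the fully faithful functor $D\colon \Ro(\R)\to\Ho_{\Zt}(\P)$ sends $X$ (via its resolution $P\xrightarrow{f}Q$) to $C_f$ and $Y[1]$ to $C_g^\dag$, and heredity identifies $\Hom_{\Ro(\R)}(X,Y)$ and $\Hom_{\Ro(\R)}(X,Y[1])$ with $\Hom_\R(X,Y)$ and $\Ext^1_\R(X,Y)$ respectively. Your proof instead works entirely at the level of chain maps and homotopies. For (i) it is complete: the null-homotopic maps $C_f\to C_g$ are exactly the pairs $(h f, g h)$ with $h\in\Hom(Q,P')$, which is precisely the kernel term of \eqref{hom1} (one uses that $g$ is mono and $Q$ is projective to see that a chain map inducing $0$ on cokernels factors this way), so the quotient is $\Hom_\R(X,Y)$. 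For (ii), after discarding several false starts (e.g.\ the claim that $\Ext^1(X,Q')$ vanishes because $Q'$ is projective is not a valid step, though you abandon it), you land on the right computation: by \eqref{hom2} a chain map $C_f\to C_g^\dag$ is a single component $s_1\in\Hom(P,Q')$, the null-homotopic ones are those of the form $h_0 f - g h_1$, and the quotient
\[
\Hom(P,Q')\big/\bigl(g_*\Hom(P,P')+f^*\Hom(Q,Q')\bigr)
\]
coincides with $\cok\bigl(f^*\colon\Hom(Q,Y)\to\Hom(P,Y)\bigr)\cong\Ext^1_\R(X,Y)$, using the presentation of $Y$ to rewrite $\Hom(P,Y)$ and $\Hom(Q,Y)$. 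You leave this last identification as ``bookkeeping,'' and it is indeed routine, but it is the entire content of (ii) in your approach, so it should be written out. What the elementary route buys is independence from the root-category machinery (and it makes the explicit description \eqref{homology} of the isomorphism in (i) transparent); what the paper's route buys is brevity and the simultaneous treatment of (i) and (ii) by full faithfulness of $D$.
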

\begin{proof}
The category $\R$ can be identified as a full subcategory of $\D^b(\R)$ by considering any object in $\R$ as a complex concentrated in degree 0. It follows that 
\[\Hom_\R(X,Y) \cong \Hom_{\D^b(\R)}(X,Y) \cong \Hom_{\Ro(\R)}(X,Y).\]
The objects $X,Y$ have the projective resolutions 
\[ 0\to P\xrightarrow{\, f}Q \twoheadrightarrow X \to 0,  \qquad
0\to P'\xrightarrow{\, g}Q' \twoheadrightarrow Y \to 0.\]
So the complexes $C_{f}, C_{g}$ are quasi-isomorphic to $X,Y$ respectively, 
and isomorphisms (i), (ii) thus follow by Lemmas \ref{root}.  
\end{proof}

Note that the isomorphism in Lemma \ref{ext_hom2} (i) may be given explicitly by $s_\blob \mapsto s$, where 
$s: A\to B$ is the unique morphism making the diagram
\begin{equation}\label{homology}
\begin{tikzcd}
0 \rar & P \rar \dar["s_1"] & Q \rar \dar["s_0"] & X  \dar["s"] \rar & 0 \\
0 \rar & P' \rar  & Q' \rar  & Y  \rar & 0
\end{tikzcd}
\end{equation}
commutative.

\subsection{Hall algebras of complexes}

We denote by $\C_\fin(\P)$ the full subcategory of $\C(\P)$ consisting 
of all complexes with finite homology, i.e.~complexes $M_\blob$ such that 
\[ H_\blob(M_\blob)\,  = \ 
(\begin{tikzcd}[cramped, column sep=scriptsize] H_1(M_\blob) \ar[r, "0", shift left] & H_0(M_\blob) \ar[l, "0", shift left] 
\end{tikzcd}) \ 
 \in\, \C(\A). \] 
The following result will be crucial for our definition of the Hall algebra of $\C_\fin(\P)$.

\begin{lem}\label{homotopy}
The set $\Ext^1_{\C(\R)}(M_\blob,N_\blob)$ is finite for all $M_\blob, N_\blob \in \C_\fin(\P)$. 
\end{lem}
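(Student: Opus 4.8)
The plan is to reduce the finiteness of $\Ext^1_{\C(\R)}(M_\blob,N_\blob)$ to the finiteness of a concrete $\Hom$-space via Lemma \ref{ext_hom}, and then to compute that $\Hom$-space using the structural results from Section \ref{sec-decom}. By Lemma \ref{ext_hom} we have a bijection
\[
\Ext^1_{\C(\R)}(M_\blob,N_\blob)\,\cong\,\Hom_{\Ho(\R)}(M_\blob,N_\blob^\dag),
\]
so it suffices to show the right-hand side is a finite set. Since $M_\blob,N_\blob\in\C_\fin(\P)$, their shifts also lie in $\C_\fin(\P)$ (the homology of $N_\blob^\dag$ is just the shifted homology of $N_\blob$, hence still in $\C(\A)$), so I may as well prove: $\Hom_{\Ho(\R)}(M_\blob,N_\blob)$ is finite for all $M_\blob,N_\blob\in\C_\fin(\P)$.

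The next step is to decompose. By Lemma \ref{decomp}, write $M_\blob = C_f\oplus C_g^\dag$ and $N_\blob = C_{f'}\oplus C_{g'}^\dag$ with $f,g,f',g'$ injective morphisms in $\P$, and with cokernels $H_0(M_\blob)=\cok f$, $H_1(M_\blob)=\cok g$, $H_0(N_\blob)=\cok f'$, $H_1(N_\blob)=\cok g'$ all lying in $\A$ by the finite-homology hypothesis. By additivity of $\Hom$ in both variables, $\Hom_{\Ho(\R)}(M_\blob,N_\blob)$ is a finite direct sum of terms of the shapes $\Hom_{\Ho(\R)}(C_f,C_{f'})$, $\Hom_{\Ho(\R)}(C_f,C_{g'}^\dag)$, $\Hom_{\Ho(\R)}(C_g^\dag,C_{f'})$, $\Hom_{\Ho(\R)}(C_g^\dag,C_{g'}^\dag)$, and their shift-twisted analogues. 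Applying the shift involution $\dag$ (which is an equivalence on $\Ho(\R)$) reduces every summand to one of the two forms treated in Lemma \ref{ext_hom2}: namely $\Hom_{\Ho(\R)}(C_f,C_{f'})\cong\Hom_\R(\cok f,\cok f')$ and $\Hom_{\Ho(\R)}(C_f,C_{g'}^\dag)\cong\Ext^1_\R(\cok f,\cok g')$. Both of these are finite: $\cok f,\cok f',\cok g'\in\A$, so $\Hom_\R(\cok f,\cok f')$ is finite by definition of $\A$, and $\Ext^1_\R(\cok f,\cok g')$ is finite because $\R$ has enough projectives (so this $\Ext$ group is a quotient of $\Hom_\R(P,\cok g')$ for a projective presentation of $\cok f$, which is finite as $\cok g'\in\A$ — this is precisely the finiteness remark preceding Section \ref{ss:Euler}). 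Summing over the finitely many summands gives the claim.

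The only mild subtlety — and the step I would be most careful about — is bookkeeping: making sure that after the $\dag$-reductions every one of the (up to eight, counting shift twists) summands genuinely lands in one of the two cases of Lemma \ref{ext_hom2}, with both arguments of $\Hom_\R$ or $\Ext^1_\R$ being cokernels of \emph{injective} maps and hence objects of $\A$. For instance a term $\Hom_{\Ho(\R)}(C_g^\dag,C_{f'})$ becomes, after applying $\dag$ to both entries, $\Hom_{\Ho(\R)}(C_g,C_{f'}^\dag)\cong\Ext^1_\R(\cok g,\cok f')$, which is of the second type; a term $\Hom_{\Ho(\R)}(C_g^\dag,C_{g'}^\dag)\cong\Hom_{\Ho(\R)}(C_g,C_{g'})\cong\Hom_\R(\cok g,\cok g')$ is of the first type. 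There is no real obstacle here — the content is already packaged in Lemmas \ref{ext_hom} and \ref{ext_hom2} together with the definition of $\A$ and the enough-projectives hypothesis — so the proof is essentially a finite case-check plus invocation of those lemmas.
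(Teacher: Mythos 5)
Your proof is correct and follows essentially the same route as the paper's (one-line) proof: identify $\Ext^1_{\C(\R)}(M_\blob,N_\blob)$ with $\Hom_{\Ho(\R)}(M_\blob,N_\blob^\dag)$ via Lemma \ref{ext_hom}, decompose both complexes by Lemma \ref{decomp}, and use the involution $\dag$ with Lemma \ref{ext_hom2} to reduce each summand to a $\Hom_\R$ or $\Ext^1_\R$ between objects of $\A$, which are finite. Your write-up just makes explicit the case-check that the paper leaves implicit.
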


\begin{proof}
This follows by using the involution $\dag$ together with Lemma \ref{ext_hom2} and combining 
with Lemmas \ref{decomp} and \ref{ext_hom}. 
\end{proof}

Since the category $\C_\fin(\P)$ is not necessarily Hom-finite, we must consider 
a generalization of 
the coefficients appearing in the definition \eqref{defprod} 
of the Hall product. 
First define a bilinear map, 
$\mu: \C(\P)\times \C(\P) \to \mathbb{Q}$, 
given by 
\smallskip
\begin{align}\label{mu}
\mu(M_\blob,N_\blob) \ :=&\ 
\< \hat M_0^+, \hat N_1^+ \> +  \< \hat M_1^+, \hat N_1^- \> + 
\< \hat M_0^-, \hat N_0^+ \> + \< \hat M_1^-, \hat N_0^- \> 
\\[.25cm]
=&  \, \ 
 \sum_{i,\, j} 
\big\< \hat M_i^{\varepsilon(j)}, \hat N_{j+1}^{\varepsilon(i)} \big\> \nonumber
\end{align}
where 
$\< \ ,\ \>: K(\R) \times K(\R) \to \mathbb{Q}$ 
denotes the generalized Euler form (Section \ref{ss:Euler}). 

Then for any
$M_\blob, N_\blob, P_\blob \in\C_\fin(\P)$,  
we define 
\begin{equation}\label{h}
\h(M_\blob,N_\blob) \, := \, 
q^{\mu(M_\blob,N_\blob)}\, 
|\Hom_{\C(\R)}(H_\blob(M_\blob), H_\blob(N_\blob))| 
\end{equation}
where $q$ is the cardinality of $\k$, 
and 
we also write 
\begin{equation*}
\e(M_\blob,N_\blob)_{P_\blob} \, := \, 
|\Ext^1_{\C(\R)}(M_\blob,N_\blob)_{P_\blob}| 
\end{equation*}
which is well-defined
by Lemma \ref{homotopy}.

In the remainder, let us write  
 $\X = \Iso(\C_\fin(\P))$ 
 for the set of isomorphism classes in $\C_\fin(\P)$. 
\begin{definition}\label{def:Hall}
The Hall algebra 
$\H(\C_\fin(\P))$ is defined 
to be the $\CC$-vector space with basis elements $\gen{M_\blob}$  
 indexed by isoclasses  
$M_\blob \in \X$, 
and with multiplication defined by 
\begin{equation*}
\gen{M_\blob} \ostar \gen{N_\blob}\ := 
v^{\<\hat M_0,\hat N_0\> + \<\hat M_1,\hat N_1\>} 
\sum_{P_\blob\in\X} 
\frac{\, \e(M_\blob,N_\blob)_{P_\blob}\, }{\h(M_\blob,N_\blob)} 
\, \gen{P_\blob},\end{equation*}
for all $M_\blob, N_\blob \in \C_\fin(\P)$. 
\end{definition}

\begin{remark}\label{Bridg}
Suppose there are complexes $M_\blob, N_\blob\in \C_\fin(\P)$  
 such that $\Hom_{\C(\R)}(M_\blob,N_\blob)$ is a finite set. 
Then it can be checked using Lemma \ref{compare} and the definition of Euler form that 
\[
 |\Hom_{\C(\R)}(M_\blob,N_\blob)|
= \h(M_\blob, N_\blob). \]
The above definition thus generalizes the 
(twisted) Hall algebras of complexes of projectives defined by Bridgeland in \cite{Br}. 
\end{remark}

\subsection{Localization} \label{ss:local}

As before, let us write 
$\cl{M_\blob} = \cl{M_0} - \cl{M_1} \in K(\P)$, 
for each $M_\blob \in \C(\P)$. 
The following result shows that the acyclic complexes $K_P$ introduced in Section \ref{ss:acyclic} define elements of $\H(\C_\fin(\P))$ with particularly simple properties.

\begin{lem}
\label{easy}
For any projective object $P\in \P$ and any complex $M_\blob\in\C_\fin(\P)$ the following identities hold in $\H(\C_\fin(\P)){\,:}$
\begin{align}\label{e1}
\gen{K_P}\ostar \gen{M_\blob}&= v^{\<\cl{P},\cl{M_\blob}\>}\cdot\gen{K_P\oplus M_\blob},\\
\label{e2}
\gen{M_\blob}\ostar  \gen{K_P}&= v^{-\<\cl{M_\blob},\cl{P}\>}\cdot \gen{ K_P\oplus M_\blob}.\end{align}
\end{lem}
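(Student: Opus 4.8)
The plan is to prove the two identities \eqref{e1} and \eqref{e2} by directly unwinding the definition of the Hall product $\ostar$ (Definition \ref{def:Hall}) on the pair $(K_P, M_\blob)$ (resp. $(M_\blob, K_P)$), and showing that the only nonzero structure constant is the one with $P_\blob = K_P \oplus M_\blob$, with coefficient $1$. The key structural input is that $K_P$ is acyclic, so $H_\blob(K_P) = 0$; hence $\Hom_{\C(\R)}(H_\blob(K_P), H_\blob(M_\blob)) = 0$ as well as $\Hom_{\C(\R)}(H_\blob(M_\blob), H_\blob(K_P)) = 0$ is a one-element set (the zero object), which forces $\h(K_P, M_\blob) = q^{\mu(K_P, M_\blob)}$ and similarly $\h(M_\blob, K_P) = q^{\mu(M_\blob, K_P)}$. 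The second key input is that an acyclic complex of projectives is itself projective as an object of $\C(\P)$ (indeed $K_P$ and $K_Q^\dag$ are the projective generators of $\C(\P)$ in Bridgeland's setup), or, more elementarily, that by Lemma \ref{ext_hom} together with Lemma \ref{acyclic} any extension of $M_\blob$ by $K_P$ (in either order) splits, so that $\Ext^1_{\C(\R)}(K_P, M_\blob)$ and $\Ext^1_{\C(\R)}(M_\blob, K_P)$ are both trivial and $\e(K_P, M_\blob)_{P_\blob} = \delta_{P_\blob,\, K_P \oplus M_\blob}$ (resp. with the arguments swapped).

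Concretely, I would proceed as follows. First, record that $K_P$ is acyclic (so $H_\blob(K_P) = 0$), and compute $\e$: by Lemma \ref{ext_hom} we have $\Ext^1_{\C(\R)}(M_\blob, K_P) \cong \Hom_{\Ho(\R)}(M_\blob, K_P^\dag)$; using the decomposition of Lemma \ref{decomp} for $M_\blob$ and the fact that $K_P^\dag \cong K_P^*$ is again acyclic, one checks via Lemma \ref{ext_hom2} (and the short exact sequences \eqref{hom1}, \eqref{hom2}) that this Hom-group vanishes — equivalently, every short exact sequence with outer terms $M_\blob$ and $K_P$ splits. Hence $\e(K_P, M_\blob)_{P_\blob}$ is $1$ if $P_\blob \cong K_P \oplus M_\blob$ and $0$ otherwise, and symmetrically for $\e(M_\blob, K_P)_{P_\blob}$. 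Second, compute $\mu(K_P, M_\blob)$: since $K_P = (P \xrightarrow{\id} P)$, Lemma \ref{decomp} gives $K_P^+ = K_P$ with $f = \id_P$, so $\cok f = 0$ and $H_0(K_P) = 0$, while $K_P^- = 0$; thus $\hat K_{P,0}^+ = \hat P$, $\hat K_{P,1}^+ = \hat P$, and $\hat K_{P,i}^- = 0$. Plugging into \eqref{mu}, $\mu(K_P, M_\blob) = \langle \hat P, \hat N_1^+ \rangle + \langle \hat P, \hat N_1^- \rangle$ where $N_\blob = M_\blob$, i.e. $\mu(K_P, M_\blob) = \langle \hat P, \hat M_1^+ + \hat M_1^- \rangle = \langle \hat P, \hat M_1 \rangle$. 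Third, assemble: the prefactor in $\ostar$ is $v^{\langle \hat K_{P,0}, \hat M_0\rangle + \langle \hat K_{P,1}, \hat M_1\rangle} = v^{\langle \hat P, \hat M_0\rangle + \langle \hat P, \hat M_1\rangle}$, and dividing by $\h(K_P, M_\blob) = q^{\mu(K_P,M_\blob)} = q^{\langle \hat P, \hat M_1\rangle} = v^{2\langle \hat P, \hat M_1\rangle}$ leaves $v^{\langle \hat P, \hat M_0\rangle - \langle \hat P, \hat M_1\rangle} = v^{\langle \hat P, \hat M_\blob\rangle}$, proving \eqref{e1}. For \eqref{e2}, the analogous computation gives $\mu(M_\blob, K_P) = \langle \hat M_0^+, \hat P\rangle + \langle \hat M_0^-, \hat P\rangle = \langle \hat M_0, \hat P\rangle$ (using $\hat K_{P,1}^+ = \hat P$ for the $+$-part and $\hat K_{P,0}^+ = \hat P$ for the appropriate term, and that $K_P^- = 0$ kills the rest), and $\h(M_\blob, K_P) = q^{\langle \hat M_0, \hat P\rangle} = v^{2\langle \hat M_0, \hat P\rangle}$; the prefactor is $v^{\langle \hat M_0, \hat P\rangle + \langle \hat M_1, \hat P\rangle}$, so the quotient is $v^{\langle \hat M_1, \hat P\rangle - \langle \hat M_0, \hat P\rangle} = v^{-\langle \hat M_\blob, \hat P\rangle}$, proving \eqref{e2}.

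The main obstacle I anticipate is the $\mu$-bookkeeping: the definition \eqref{mu} is a sum of four Euler-form terms involving the $\pm$-summands of both arguments under the decomposition of Lemma \ref{decomp}, and one must be careful about which summand of $K_P$ sits in degree $0$ versus degree $1$ and how the sign map $\varepsilon$ indexes the terms, so that the powers of $v$ coming from the twist, from $\h$, and from $\mu$ combine to exactly $\pm\langle \hat P, \hat M_\blob\rangle$. A secondary point requiring care is the claim that every extension between $M_\blob$ and an acyclic $K_P$ splits: strictly this should be justified by the identification in Lemma \ref{ext_hom} of $\Ext^1_{\C(\R)}(M_\blob,N_\blob)$ with $\Hom_{\Ho(\R)}(M_\blob,N_\blob^\dag)$ and then observing that $\Hom_{\Ho(\R)}(M_\blob, K_P^\dag) = 0$ because $K_P^\dag$ is null-homotopic (acyclic complexes of projectives are zero in $\Ho(\P)$, as noted just before Lemma \ref{acyclic}). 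Everything else is a direct, if slightly tedious, verification against Definition \ref{def:Hall}.
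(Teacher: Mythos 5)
Your proof is correct and follows essentially the same route as the paper: compute $\mu(K_P,M_\blob)=\langle\hat P,\hat M_1\rangle$ and $\mu(M_\blob,K_P)=\langle\hat M_0,\hat P\rangle$ from \eqref{mu}, observe that the relevant $\Ext^1$ groups vanish via Lemma \ref{ext_hom} because $K_P$ and $K_P^\dag$ are null-homotopic, and assemble the powers of $v$ from Definition \ref{def:Hall}. The bookkeeping in your second paragraph checks out exactly against the paper's (unwritten) computation.
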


\begin{proof} 
It is easy to check directly from \eqref{mu} that
\[\mu(K_P,M_\blob)=
\<\hat P,\hat M^+_1\> + \<\hat P,\hat M^-_1\> 
, \quad 
\mu(M_\blob,K_P) = \<\hat M_0^+,\hat P\> + \<\hat M_0^-,\hat P\> \]
so that 
\[\h(K_P, M_\blob)= q^{\<\hat P,\hat  M_1\>},\quad 
\h(M_\blob,K_P) = q^{\<\hat M_0,\hat P\>}.
\]
The complexes $K_P$ are homotopy equivalent to the zero complex, 
so Lemma \ref{ext_hom} shows that the extension group in the definition of the Hall product vanishes. 
Taking into account Definition \ref{def:Hall}  gives the result. 
\end{proof}

\begin{lem}
\label{cor}
For any projective object $P\in \P$ and any complex $M_\blob\in\C_\fin(\P)$ the following identities are true in $\H(\C_\fin(\P)){\,:}$
\begin{align}\label{eq1} \gen{K_P}\ostar \gen{M_\blob}=v^{(\cl{P},\cl{M_\blob})} \, \gen{M_\blob}\ostar  \gen{K_P},\\
\label{eq2}
\gen{K_P^\dag}\ostar \gen{M_\blob}=v^{-(\cl{P},\cl {M_\blob})} \, \gen{M_\blob}\ostar \gen{K_P^\dag}.\end{align}
\end{lem}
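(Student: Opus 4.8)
The plan is to deduce Lemma \ref{cor} directly from Lemma \ref{easy}, by evaluating both sides of each identity against the simple formulas \eqref{e1} and \eqref{e2}. The two statements \eqref{eq1} and \eqref{eq2} are parallel — the second obtained from the first by applying the shift involution $\dag$ — so it suffices to treat \eqref{eq1} in detail and then indicate the analogous computation for \eqref{eq2}, noting that $K_P^\dag = (K_P)^*$ and that applying $\dag$ to a complex exchanges the $+$ and $-$ summands in the decomposition of Lemma \ref{decomp}, hence replaces $\hat{P}$ by $-\hat{P}$ (since $\cl{K_P^\dag} = \cl{P}-\cl{P} = 0$ as a class, but the relevant quantity is the contribution $\hat P$ in the $\mu$-computation, which changes sign under the shift on homology/differentials — this sign accounting is what produces the $v^{-(\cl P, \cl M_\blob)}$).

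First I would apply \eqref{e1} to rewrite $\gen{K_P}\ostar\gen{M_\blob} = v^{\<\cl P,\cl{M_\blob}\>}\gen{K_P\oplus M_\blob}$, and apply \eqref{e2} to rewrite $\gen{M_\blob}\ostar\gen{K_P} = v^{-\<\cl{M_\blob},\cl P\>}\gen{K_P\oplus M_\blob}$. Taking the ratio of these two expressions, the common factor $\gen{K_P\oplus M_\blob}$ cancels, and we are left with
\[
\gen{K_P}\ostar\gen{M_\blob} = v^{\<\cl P,\cl{M_\blob}\> + \<\cl{M_\blob},\cl P\>}\,\gen{M_\blob}\ostar\gen{K_P} = v^{(\cl P,\cl{M_\blob})}\,\gen{M_\blob}\ostar\gen{K_P},
\]
which is exactly \eqref{eq1}, using the definition $(\cl X,\cl Y) = \<\cl X,\cl Y\> + \<\cl Y,\cl X\>$ of the symmetrized generalized Euler form from Section \ref{ss:Euler}. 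For \eqref{eq2}, one repeats the argument after first establishing the analogues of \eqref{e1} and \eqref{e2} with $K_P$ replaced by $K_P^\dag$; these follow from the same computation as in the proof of Lemma \ref{easy}, since $K_P^\dag$ is also homotopy equivalent to the zero complex (so the extension group still vanishes by Lemma \ref{ext_hom}), but now the $\mu$-terms pick up $\hat M_0^{\varepsilon(j)}$-versus-$\hat P$ pairings in the opposite grading slots, yielding $\h(K_P^\dag, M_\blob) = q^{\<\hat P,\hat M_0\>}$ and $\h(M_\blob, K_P^\dag) = q^{\<\hat M_1,\hat P\>}$, and hence an overall sign flip in the exponent.

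The only genuinely delicate point is the bookkeeping of the shift functor $\dag$ in the $K_P^\dag$ case: one must verify that replacing $K_P$ by $K_P^\dag$ consistently swaps the roles of $\hat M_0$ and $\hat M_1$ (equivalently, of $\hat N_0$ and $\hat N_1$) in both the $\mu$-form \eqref{mu} and the twisting exponent $\<\hat M_0,\hat N_0\> + \<\hat M_1,\hat N_1\>$ of Definition \ref{def:Hall}, and that these changes combine to give precisely $-(\cl P,\cl{M_\blob})$ rather than something off by a sign or a term. I expect this — rather than the cancellation argument, which is purely formal — to be the main obstacle, though it is a short and mechanical check given Lemma \ref{easy} and the explicit formula for $\mu$. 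An alternative, fully symmetric route would be to prove the $K_P^\dag$-analogue of Lemma \ref{easy} as a separate lemma first; but folding it into this proof keeps the exposition compact.
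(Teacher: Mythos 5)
Your proof is correct and follows essentially the same route as the paper: identity \eqref{eq1} is read off immediately by dividing the two formulas of Lemma \ref{easy}, and your computed values $\h(K_P^\dag,M_\blob)=q^{\<\hat P,\hat M_0\>}$ and $\h(M_\blob,K_P^\dag)=q^{\<\hat M_1,\hat P\>}$ check out against \eqref{mu}. The only (immaterial) difference is that for \eqref{eq2} the paper simply applies the involution $\dag$ to \eqref{eq1}, using $\cl{M_\blob^\dag}=-\cl{M_\blob}$, rather than redoing the $\mu$-bookkeeping for $K_P^\dag$ as you do.
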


\begin{proof}
Equation \eqref{eq1} is immediate from Lemma \ref{easy}. Equation \eqref{eq2} follows  by applying the involution $\dag$.
\end{proof}

In particular, since $\cl{K_{P}}=0\in K(\R)$, we have  for $P, Q \in \P$,
\begin{equation}\label{eq3}
\gen{K_P} \ostar  \gen{K_Q} = \gen{K_{P}\oplus K_{Q}}, \quad \gen{K_P}\ostar  \gen{K_Q^\dag}=\gen{K_P\oplus K_Q^\dag},\end{equation}
\begin{equation}\label{eq4}
\big [\gen{K_P},\gen{K_Q} \big ]=\big [\gen{K_P},\gen{K_Q^\dag} \big ]=\big [\gen{K_P^\dag},\gen{K_Q^\dag}\big ]=0,  \end{equation} where $[x,y] := x\ostar y - y\ostar x$.
Note that  any element of the form $\gen{K_{P}}\ostar \gen{K_P^\dag}$ is central.

Let us write $\C_0 \subset \C_\fin(\P)$ to denote the full subcategory of all acyclic complexes of projectives. 
It then follows from \eqref{eq3} and \eqref{eq4} that the subspace $\H(\C_0) \subset \H(\C_\fin(\P))$
spanned by the isoclasses of objects in $\C_0$
is closed under the multiplication $\ostar$ 
and has the structure of a commutative associative algebra.
%
%

The following result is also clear. 

\begin{lem}
The left and right actions of $\H(\C_0)$ on $\H(\C_\fin(\P))$ given 
by restricting multiplication 
make the Hall algebra $\H(\C_\fin(\P))$ into an 
$\H(\C_0)$-bimodule. 
\end{lem}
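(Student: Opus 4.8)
The claim is that $\H(\C_\fin(\P))$ is an $\H(\C_0)$-bimodule under the restriction of $\ostar$. Since $\H(\C_0)$ is already known (from \eqref{eq3}, \eqref{eq4}) to be a commutative associative subalgebra, the only thing to verify is that the left and right multiplications by elements of $\H(\C_0)$ are associative against one another and against the algebra structure of $\H(\C_\fin(\P))$ itself — i.e.\ that $(a \ostar m)\ostar b = a\ostar(m\ostar b)$, $(a\ostar a')\ostar m = a\ostar(a'\ostar m)$, and $(m\ostar b)\ostar b' = m\ostar(b\ostar b')$ for $a,a',b,b' \in \H(\C_0)$ and $m\in\H(\C_\fin(\P))$. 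The point is that full associativity of $\ostar$ on $\H(\C_\fin(\P))$ is \emph{not} available here (that is deferred to Section~\ref{sec:dbl}), so the plan is to establish these three special cases directly, using only the explicit formulas of Lemma~\ref{easy} and the decomposition Lemma~\ref{acyclic}.

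First I would reduce to the case of basis elements: by bilinearity it suffices to treat $a = \gen{K_P\oplus K_Q^\dag}$ (the general isoclass of an acyclic complex, by Lemma~\ref{acyclic}), and likewise $a'$, $b$, $b'$, with $m = \gen{M_\blob}$ for $M_\blob\in\C_\fin(\P)$. Using \eqref{eq3} to split $\gen{K_P\oplus K_Q^\dag} = \gen{K_P}\ostar\gen{K_Q^\dag}$ and the centrality/commutation relations \eqref{eq4} and Lemma~\ref{cor}, one reduces further to the generators $\gen{K_P}$ and $\gen{K_P^\dag}$ separately. Then Lemma~\ref{easy} (and its $\dag$-image) gives the completely explicit rule $\gen{K_P}\ostar\gen{M_\blob} = v^{\<\cl P,\cl{M_\blob}\>}\gen{K_P\oplus M_\blob}$, so each of the three associativity identities becomes a matter of checking that the accumulated powers of $v$ agree on both sides and that $K_P\oplus(K_Q^\dag\oplus M_\blob)$ is literally the same isoclass as $(K_P\oplus K_Q^\dag)\oplus M_\blob$ — the latter being obvious, and the former following from bilinearity of the generalized Euler form $\<-,-\>$ on $K(\R)$ together with $\cl{K_P} = 0$, which makes most cross-terms vanish and the rest telescope. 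I would spell this out for $(a\ostar m)\ostar b = a\ostar(m\ostar b)$ with $a = \gen{K_P}$, $b = \gen{K_Q}$ as the representative computation, using \eqref{e1} on the left factor and \eqref{e2} on the right, and note the remaining cases (replacing some $K$'s by $K^\dag$'s) are identical after applying $\dag$ and Lemma~\ref{cor}.

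The mild subtlety — the only place where anything could go wrong — is the mixed associativity $(\gen{K_P}\ostar\gen{M_\blob})\ostar\gen{K_Q} \overset{?}{=} \gen{K_P}\ostar(\gen{M_\blob}\ostar\gen{K_Q})$, since here the middle factor $\gen{M_\blob}$ is a genuine element of $\C_\fin(\P)$ and one is multiplying it by an acyclic complex on each side; the two sides produce the $v$-exponents $\<\cl P,\cl{M_\blob}\> + \<\cl P\oplus\cl{M_\blob},\cl Q\>$ and $-\<\cl{M_\blob},\cl Q\> + \<\cl P, \cl{M_\blob}\oplus\cl Q\>$ respectively (using $\cl{K_P\oplus M_\blob} = \cl{M_\blob}$ because $\cl{K_P}=0$), and one must check these agree — which they do, by bilinearity and $\<\cl P,\cl Q\>$ cancelling appropriately, with no appeal to the full Hall product structure. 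Once these identities are in hand, the bimodule axioms (compatibility of the left and right $\H(\C_0)$-actions and unitality, the unit being $\gen{0} = \gen{K_0}$) follow immediately, completing the proof.
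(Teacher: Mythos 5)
Your argument is correct and is exactly the direct verification the paper has in mind --- the paper simply asserts the lemma is clear and gives no proof, and your reduction to the generators $\gen{K_P}$, $\gen{K_P^\dag}$ via Lemma \ref{acyclic} and \eqref{eq3}, followed by comparing $v$-exponents using \eqref{e1}, \eqref{e2} and $\cl{K_P}=0$, is the intended computation. One small slip: in your displayed exponents for the mixed associativity the second term on the left-hand side should carry a minus sign (it comes from \eqref{e2}), and since $\cl{K_P\oplus M_\blob}=\cl{M_\blob}$ there is in fact no $\<\cl{P},\cl{Q}\>$ term to cancel --- both sides equal $\<\cl{P},\cl{M_\blob}\>-\<\cl{M_\blob},\cl{Q}\>$ on the nose.
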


Notice that the basis   
$\Z := \{\gen{M_\blob} \in \H(\C_0)\}$
is a multiplicative subset in $\H(\C_0)$. 
Let us write 
$\DH_0(\R) = \H(\C_0)_\Z$ 
to denote the localization of 
$\H(\C_0)$ at $\Z$. 
More explicitly, we have
\[\DH_0(\R) = \H(\C_0)\big [ \gen{M_\blob}^{-1} :
M_\blob \in \C_0 \big ].\]

The assignment $P\mapsto K_P$ extends to a  group homomorphism
\[K\colon K(\R)\lra \DH_0(\R)^\times.\]
This map is given explicitly by writing an element $\alpha\in K(\R)$ in the form  $\alpha=\cl{P}-\cl{Q}$ for objects $P,Q\in \P$ and then setting $K_\alpha := K_{\cl{P}} \ostar K_{\cl{Q}}^{-1}$. 
Composing with the involution $\dag$ gives another map
\[K^\dag\colon K(\R)\lra \DH_0(\R)^\times.\]
Taking these maps together an extending linearly defines a $\CC$-linear map from the group algebra
\[\CC[K(\R)\times K(\R)]\ \isoto\ \DH_0(\R),\]
which is an isomorphism by Lemma \ref{acyclic}. 
It follows that the set 
$\{K_{\alpha} \ostar K_{\beta}^\dag \mid   \alpha, \beta \in K(\R) \}$  
gives a $\CC$-basis of  $\DH_0(\R)$

\begin{definition}
The {\em localized Hall algebra}, $\DH(\R)$, 
is the right $\DH_0(\R)$-module
 obtained from $\H(\C_\fin(\P))$  
by extending scalars, 
\[\DH(\R) 
:= \H(\C_\fin(\P))\otimes_{\H(\C_0)} \DH_0(\R). \]
That is, $\DH(\R)$ is  the localization 
$\H(\C_\fin(\P))_\Z$.
We also consider $\DH(\R)$ as a $\DH_0(\R)$-bimodule by setting 
\begin{equation}\label{commute}
 (K_{\alpha}\ostar K_{\beta}^\dag) \ostar \gen{M_\blob} := 
v^{(\alpha - \beta, \cl{M_\blob})} \cdot 
\gen{M_\blob} \ostar (K_{\alpha}\ostar  K_{\beta}^\dag)  
\end{equation}
for all $M_\blob \in \C_\fin(\P)$ and $\alpha, \beta \in K(\R)$.  
We thus have a well-defined binary operation 
\[ - \ostar -: \DH(\R) \times \DH(\R) \to \DH(\R)\] 
which agrees with the  map induced from  
the multiplication 
in Definition \ref{def:Hall} by restricting along 
the canonical map $\H(\C_\fin(\P)) \to  \DH(\R)$. 
\end{definition}

Given a complex $M_\blob\in \C_\fin(\P)$, define a corresponding element $E_{M_\blob}$ in $\DH(\R)$ given by
\[E_{M_\blob} :=  v^{\<\cl{M_1^+} -\cl{M_0^-}, \cl{M_\blob}\>}\, 
K_{-\cl{M_1^+}} \ostar  K_{-\cl{M_0^-}}^\dag \ostar \gen{M_\blob}.\]
Then we claim that 
\begin{equation}\label{EK}
E_{M_\blob\oplus K_\blob}= E_{M_\blob}
\end{equation}
for any acyclic complex of projectives 
$K_\blob \in \C_0$. 
Indeed, suppose that $K_\blob = K_P \oplus K_Q^\dag$ for some $P,Q\in \P$.  
Then clearly 
$M_\blob\cl{\oplus}K_\blob = \cl{M_\blob}$, and 
it follows by Lemma \ref{easy} that 
\begin{align*}
E_{M_\blob\oplus K_\blob}
& =  
v^{\<M_1^+\cl{\oplus}P-M_0^-\cl{\oplus}Q,\, M_\blob\cl{\oplus}K_\blob\>}
\cdot 
K_{-M_1^+\cl{\oplus}P} \ostar  K_{-M_0^-\cl{\oplus}Q}^\dag \ostar \gen{K_P\oplus K_Q^\dag \oplus M_\blob} \\[1em]
& = v^{\<\cl{M_1^+}-\cl{M_0^-},\, \cl{M_\blob}\>}
\cdot 
K_{-\cl{M_1^+}} \ostar  K_{-\cl{M_0^-}}^\dag \ostar \gen{M_\blob}, 
\end{align*}
so we get the same element $E_{M_\blob}$.

We note that a minimal projective resolution of $A \in \A$ need not be unique because the category $\C(\P)$ is not Krull--Schmidt in general. However, it will be convenient to fix a (not necessarily minimal) resolution for each object $A \in A$.

\begin{definition} \label{def-eab}
(i) For each object $A \in \A$, fix a projective resolution
\begin{equation} \label{eqn-resolpqa}
0\lra P_A \stackrel{f_A}{\lra}Q_A \lra A\lra 0 \end{equation}
and the complex $C_{A}$ is defined to be $C_{f_A}\in \C(\P)$.

(ii) Given objects $A, B \in \A$, 
write $E_{A,B}$ to denote the element $E_{C_A \oplus C_B^\dag}$ in $\DH(\R)$. 
\end{definition}

The next lemma shows that the definition of $E_{A,B}$ 
 is independent of the choice of resolutions defining $C_A$ and $C_B$. 
\begin{lem}\label{lem-E}
Suppose $A,B\in \A$, 
and let $M_\blob\in \C_\fin(\P)$ be any complex such that 
$A \cong H_0(M_\blob)$ and $B\cong H_1(M_\blob)$. 
Then $E_{M_\blob} = E_{A,B}$. 
\end{lem}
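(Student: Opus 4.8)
The plan is to reduce the claim to the already-established invariance $E_{M_\blob \oplus K_\blob} = E_{M_\blob}$ (equation \eqref{EK}) together with the ``exchange'' Proposition~\ref{exchange}. The point is that the definition of $E_{A,B}$ uses a \emph{specific} complex $C_A \oplus C_B^\dag$ built from the fixed resolutions \eqref{eqn-resolpqa}, whereas the lemma asserts that \emph{any} complex $M_\blob$ with the same homology gives the same element of $\DH(\R)$. Both $M_\blob$ and $C_A \oplus C_B^\dag$ lie in $\C_\fin(\P)$ and have isomorphic homology, namely $H_0 \cong A$ and $H_1 \cong B$.

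First I would invoke Proposition~\ref{exchange}: since $H_\blob(M_\blob) \cong H_\blob(C_A \oplus C_B^\dag)$ in $\C(\R)$, there exist acyclic complexes $K_\blob, K_\blob' \in \C_0$ such that
\[
M_\blob \oplus K_\blob' \ \cong \ (C_A \oplus C_B^\dag) \oplus K_\blob
\]
as complexes in $\C(\P)$. Since isomorphic complexes obviously define the same basis element $\gen{-}$ of $\H(\C_\fin(\P))$, hence the same element $E_{-}$ of $\DH(\R)$, we get $E_{M_\blob \oplus K_\blob'} = E_{C_A \oplus C_B^\dag \oplus K_\blob}$. Now apply the invariance property \eqref{EK} to each side: $E_{M_\blob \oplus K_\blob'} = E_{M_\blob}$ and $E_{C_A \oplus C_B^\dag \oplus K_\blob} = E_{C_A \oplus C_B^\dag} = E_{A,B}$ by Definition~\ref{def-eab}(ii). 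Combining these equalities yields $E_{M_\blob} = E_{A,B}$, as desired.

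I expect the argument to be essentially this short, so the only point requiring a little care is making sure the hypotheses of the cited results are met. For Proposition~\ref{exchange} one needs $M_\blob$ and $C_A \oplus C_B^\dag$ to be objects of $\C(\P)$ with isomorphic homology; the first holds by hypothesis on $M_\blob$ and by construction of $C_A, C_B$, and the homology isomorphism is exactly the hypothesis $A \cong H_0(M_\blob)$, $B \cong H_1(M_\blob)$ combined with the fact that $H_\blob(C_A \oplus C_B^\dag) = (B \rightleftarrows A)$ with zero differentials (using that $C_A$ has homology $A$ in degree $0$, and the shift $C_B^\dag$ moves the homology $B$ of $C_B$ into degree $1$). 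For \eqref{EK} one needs the added summands to be acyclic complexes of projectives lying in $\C_0$, which is precisely what Proposition~\ref{exchange} provides. The mild subtlety worth a sentence is that $E_{M_\blob}$ was defined using the specific decomposition $M_\blob = M_\blob^+ \oplus M_\blob^-$ of Lemma~\ref{decomp}, so one should note that the isomorphism in Proposition~\ref{exchange} is compatible with passing to the invariant $E_{-}$; but this is immediate since $E_{-}$ depends only on the isomorphism class of the complex.

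In short, the proof is: apply Proposition~\ref{exchange} to absorb the discrepancy between $M_\blob$ and $C_A \oplus C_B^\dag$ into acyclic complexes, then kill those acyclic complexes using the already-verified stability $E_{N_\blob \oplus K_\blob} = E_{N_\blob}$. No genuine obstacle is anticipated; the work has already been done in establishing \eqref{EK} and Proposition~\ref{exchange}.
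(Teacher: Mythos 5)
Your argument is exactly the paper's proof: apply Proposition~\ref{exchange} to produce acyclic complexes $K_\blob, K_\blob'$ with $M_\blob \oplus K_\blob' \cong C_A \oplus C_B^\dag \oplus K_\blob$, then invoke the stability \eqref{EK} on both sides. The hypothesis checks you add are correct and the proof is complete.
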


\begin{proof}
Let $M_\blob$ be such a complex. 
By Proposition \ref{exchange} there exist 
acyclic complexes $K_\blob, K_\blob'$ in $\C_0(\P)$ such that 
$\gen{M_\blob \oplus K_\blob} \cong \gen{C_A\oplus C_B^\dag \oplus K_\blob'}$, 
and the result follows from \eqref{EK}.
\end{proof}

The following result provides an explicit basis for the localized Hall algebra. 

\begin{prop}\label{free}
The algebra $\DH(\R)$ is free as a right
$\DH_0(\R)$-module, with basis consisting of elements
$E_{A,B}$ 
indexed by all pairs of objects 
$A,B \in \Iso(\A)$. 
\end{prop}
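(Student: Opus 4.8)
The strategy is to show that the elements $\gen{M_\blob}$ for $M_\blob \in \X$, which form a $\CC$-basis of $\H(\C_\fin(\P))$, can be reorganized: each isoclass $M_\blob$ decomposes via Lemma \ref{decomp} as $M_\blob^+ \oplus M_\blob^-$, and then $E_{M_\blob}$ differs from $\gen{M_\blob}$ only by an invertible scalar multiple of an element of $\DH_0(\R)$, namely $v^{\<\cl{M_1^+}-\cl{M_0^-},\cl{M_\blob}\>} K_{-\cl{M_1^+}} \ostar K_{-\cl{M_0^-}}^\dag$. Using Lemma \ref{easy} (together with Lemma \ref{acyclic} to know that $\DH_0(\R)$ has basis $\{K_\alpha \ostar K_\beta^\dag\}$), we see that in $\DH(\R) = \H(\C_\fin(\P))_\Z$ every basis element $\gen{M_\blob}$ equals $E_{M_\blob}$ times an invertible element of $\DH_0(\R)$. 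Hence $\{E_{M_\blob} : M_\blob \in \X\}$ is also a free generating set of $\DH(\R)$ over $\DH_0(\R)$.

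\textbf{Reducing from $\X$ to pairs $(A,B)$.}
Next I would use Lemma \ref{lem-E}: if $M_\blob, M_\blob' \in \C_\fin(\P)$ have $H_0(M_\blob) \cong H_0(M_\blob')$ and $H_1(M_\blob) \cong H_1(M_\blob')$, then $E_{M_\blob} = E_{M_\blob'}$. By Proposition \ref{exchange}, two complexes have the same $E$-element precisely when they agree in $\Ho(\P)$ up to acyclic summands, i.e.\ precisely when their homologies agree. Therefore the map $M_\blob \mapsto (H_0(M_\blob), H_1(M_\blob))$ induces a bijection between the set of $E$-elements and $\Iso(\A) \times \Iso(\A)$ — surjectivity because for any $A, B \in \A$ the complex $C_A \oplus C_B^\dag$ realizes $(A,B)$ as homology, and injectivity/well-definedness by Lemma \ref{lem-E} via Proposition \ref{exchange}. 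The issue to handle carefully here is that $\X$ is strictly larger than $\Iso(\A)\times\Iso(\A)$ (different acyclic parts give different isoclasses $M_\blob$ but the same $E$-element), so one must check that collapsing along this equivalence is compatible with the free-module structure: the fibers of $M_\blob \mapsto (H_0, H_1)$ are exactly the $\DH_0(\R)$-orbits, i.e.\ $\gen{M_\blob}$ and $\gen{M_\blob'}$ with the same homology differ by a unit of $\DH_0(\R)$ (this is again Lemma \ref{easy} applied after Proposition \ref{exchange}).

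\textbf{Where condition (c) and freeness enter.}
Combining the two steps: $\DH(\R)$ is spanned over $\DH_0(\R)$ by $\{E_{A,B}\}_{A,B \in \Iso(\A)}$, and it remains to prove these are $\DH_0(\R)$-linearly independent. Here is where condition (c) of Section \ref{ass} is used. Suppose $\sum_{A,B} c_{A,B}(\lambda) E_{A,B} \ostar K_{\lambda} = 0$ is a finite $\DH_0(\R)$-relation (with $\lambda$ ranging over basis elements of $\DH_0(\R)$ and $c_{A,B}(\lambda) \in \CC$). Rewriting each $E_{A,B}$ in terms of $\gen{C_A \oplus C_B^\dag}$ times a unit and using Lemma \ref{easy}, the relation becomes a $\CC$-linear relation among elements $\gen{K_{P} \oplus K_{Q}^\dag \oplus C_A \oplus C_B^\dag}$ of $\H(\C_\fin(\P))$. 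Since the $\gen{N_\blob}$, $N_\blob \in \X$, are a genuine basis, two such are equal only if the underlying complexes are isomorphic in $\C(\P)$; by Lemma \ref{decomp} an isomorphism forces the injective parts to match, and condition (c) then guarantees that the projective objects appearing are determined up to isomorphism, so distinct data $(P, Q, A, B)$ yield distinct basis elements $\gen{N_\blob}$. Hence all coefficients $c_{A,B}(\lambda)$ vanish. The main obstacle is precisely this last independence argument — tracking how the shift-twists and the $K_\alpha, K_\beta^\dag$ factors interact so that cancellation in $\DH(\R)$ is forced to come from an isomorphism of complexes in $\C(\P)$, at which point Lemma \ref{decomp} and axiom (c) close the argument.
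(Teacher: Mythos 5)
Your proposal follows essentially the same route as the paper: spanning comes from writing $\gen{M_\blob}$ as a unit multiple of $E_{H_0(M_\blob),H_1(M_\blob)}$ times an element of $\Z$ (via Lemmas \ref{decomp}, \ref{lem-E} and \ref{easy}), and independence comes down to showing that the various products $\gen{C_A\oplus C_B^\dag}\ostar z$, for distinct $z\in\Z$, are unit multiples of distinct basis elements of $\H(\C_\fin(\P))$, which is exactly where Lemma \ref{acyclic} and condition (c) enter in the paper as well.

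One step in your independence argument is stated too loosely, though the idea needed to repair it is already in your hands. You say that a relation $\sum c_{A,B}(\lambda)\,E_{A,B}\ostar K_\lambda=0$ in $\DH(\R)$ ``becomes a $\CC$-linear relation among elements $\gen{K_P\oplus K_Q^\dag\oplus C_A\oplus C_B^\dag}$ of $\H(\C_\fin(\P))$.'' This is not automatic: $\DH(\R)$ is the localization $\H(\C_\fin(\P))_\Z$, and the $K_\lambda$ involve inverses, so after clearing denominators a relation in $\DH(\R)$ only says that the corresponding element of $\H(\C_\fin(\P))$ is $\Z$-torsion, i.e.\ is killed by some further $z\in\Z$. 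The paper therefore proves the stronger statement that $\gen{M_\blob}\ostar x\ostar z=0$ for \emph{any} $z\in\Z$ forces $x=0$, using condition (c) to see that $z_1\ostar z,\dots,z_n\ostar z$ remain distinct after the extra multiplication. Your distinctness argument via Lemma \ref{decomp} and axiom (c) is the right one; you just need to run it after multiplying through by an arbitrary $z\in\Z$, not only for the ``cleared'' relation itself. (The paper also organizes the independence claim by first observing that $\DH(\R)$ carries an $\Iso(\A)^2$-grading, which lets it treat one component $E_{A,B}\ostar y=0$ at a time; this is a convenience rather than a substantive difference from your approach.)
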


\begin{proof}
Suppose $M_\blob \in \X$,  and set $A=H_0(M_\blob)$, $B=H_1(M_\blob)$.  
Then $E_{M_\blob} = E_{A,B}$ by Lemma \ref{lem-E}, 
and one may check using \eqref{commute} that 
\[ \gen{M_\blob} = 
v^{\<\cl{Q} - \cl{P}, \cl{M_\blob}\>}\, 
 E_{A,B} \ostar \gen{K_P \oplus K_Q^\dag},
 \]
for 
$P=M_1^+$ and $Q=M_0^-$. 
This shows that the elements $E_{A,B}$ 
span $\DH(\R)$ as a $\DH_0(\R)$-module.

It remains to check that the elements $E_{A,B}$ 
are $\DH_0(\R)$--linearly independent. 
Notice that the Hall algebra $\H(\C_\fin(\P))$ 
is naturally graded as a $\CC$-vector space by the set $\Iso(\A)\times \Iso(\A)$: 
\begin{equation*}
\H(\C_\fin(\P)) = \bigoplus_{(A,B) \in \Iso(\A)^2} \H_{(A,B)}, \qquad
\H_{(A, B)}  :=  
\bigoplus_{H_0(M_\blob)\simeq A,\, H_1(M_\blob)\simeq B} 
\CC\gen{M_\blob}.
\end{equation*}
Since the action of $\H(\C_0)$ on $\H(\C_\fin(\P))$ is $\Iso(\A)^2$-homogeneous, it follows that 
$\DH(\R)$ also has an $\Iso(\A)^2$-grading.  It is thus clear that the elements $E_{A,B}$ span distinct graded components of $\DH(\R)$. 
To see that each component is a  free $\DH_0(\R)$-module of rank one, it remains to check that for each $y\in \DH(\R)$, we have $E_{A,B}\ostar y=0$ implies $y=0$. 

Let us write $M_\blob = C_A \oplus C_B^\dag$. 
Then it will suffice to show that for any $x\in \H(\C_0)$, the element 
\[\gen{M_\blob} \ostar x \in \H(\C_\fin(\P))\]
is a $\Z$-torsion element only if $x=0$. 
Suppose that 
\[x = c_1 z_1+ \cdots + c_n z_n \]
for some constants $c_1, \dots, c_n\in \CC$ and distinct elements $z_1, \dots, z_n\in \Z$. 
One may check using Lemma \ref{acyclic} and condition (c) in Section \ref{ass}, that for any $z\in \Z$ the elements $z_1\ostar z, \dots, z_n\ostar z$ are also distinct. 
Next suppose that $\gen{M_\blob}\ostar (x\ostar z) = 0$ in $\H(\C_\fin(\P))$. 
This gives an equation
\[
c_1\cdot \gen{M_\blob}\ostar z_1\ostar z
+ \cdots + 
c_n \cdot \gen{M_\blob}\ostar z_n\ostar z=0.
\]
One may again use condition (c) together with Lemmas \ref{decomp} and \ref{acyclic}
to check that the terms appearing in this dependence relation are unit multiples of distinct basis elements in $\X$.  So the relation must be trivial: $c_1 = \dots = c_n =0$, which gives $x=0$. 
This completes the proof. 
\end{proof}

\section{Associativity via the Drinfeld double}\label{sec:dbl}
In this section 
we prove that $\DH(\R)$ is the Drinfeld double of the bialgebra $\widetilde{\H_v}(\R)$ under suitable finiteness conditions. 
As a corollary, we show that $\DH(\R)$ is an associative 
algebra with respect to the multiplication described in the previous section.

\subsection{Multiplication formulas }
Suppose $A,B\in \A$ and recall the element $E_{A,B}$ in $\DH(\R)$ defined in Definition \ref{def-eab}. 
Notice that the image under the involution $\dag$ is given by 
$E_{A,B}^\dag = E_{B,A}$. 
Let us write 
\[E_A := E_{A,0}, \qquad 
F_B := E_{0,B}\] 
so that  $F_A = E_A^\dag$.

\begin{lem}\label{EE}
Suppose $A,B\in \A$. 
The following equality holds in $\DH(\R)$. 
\[
E_{A}\ostar E_{B}
=
v^{\<A,B\>} \sum_{C\in \Iso(\R)} 
\frac{\, |\Ext^1_\R(A,B)_{C}|\, }{|\Hom_\R(A,B)|} \, E_C
\]

\end{lem}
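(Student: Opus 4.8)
The plan is to compute the product $E_A \ostar E_B$ directly from the definitions by unwinding the Hall product $\ostar$ on the relevant complexes. Recall $E_A = E_{A,0} = E_{C_A \oplus C_0^\dag} = E_{C_A}$, where $C_A = C_{f_A}$ comes from a fixed projective resolution $0 \to P_A \xrightarrow{f_A} Q_A \to A \to 0$; similarly $E_B = E_{C_B}$ with resolution $0 \to P_B \xrightarrow{f_B} Q_B \to B \to 0$. By Definition of $E_{M_\blob}$ we have $E_{C_A} = v^{\<\cl{P_A}, \cl{C_A}\>} K_{-\cl{P_A}} \ostar \gen{C_A}$ (noting $C_A^- = 0$) and likewise for $E_{C_B}$. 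So the computation reduces to understanding $\gen{C_A} \ostar \gen{C_B}$ together with the commutation relation \eqref{commute} moving the $K$-factors past $\gen{C_B}$.

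First I would apply Definition \ref{def:Hall} to expand $\gen{C_A} \ostar \gen{C_B}$ as $v^{\<\cl{Q_A},\cl{Q_B}\> + \<\cl{P_A},\cl{P_B}\>} \sum_{P_\blob} \frac{\e(C_A,C_B)_{P_\blob}}{\h(C_A,C_B)} \gen{P_\blob}$, then identify the pieces: by \eqref{mu} we get $\mu(C_A,C_B) = \<\cl{Q_A}, \cl{P_B}\>$ (only the first term $\<\hat M_0^+,\hat N_1^+\>$ survives since $C_A = C_A^+$, $C_B = C_B^+$), and $\h(C_A,C_B) = q^{\<\cl{Q_A},\cl{P_B}\>} |\Hom_{\C(\R)}(H_\blob(C_A),H_\blob(C_B))| = q^{\<\cl{Q_A},\cl{P_B}\>} |\Hom_\R(A,B)|$. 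For the extension term, by Lemma \ref{ext_hom} we have $\Ext^1_{\C(\R)}(C_A,C_B) \cong \Hom_{\Ho(\R)}(C_A, C_B^\dag) \cong \Ext^1_\R(A,B)$ using Lemma \ref{ext_hom2}(ii); the subtle point is to track how the middle term $P_\blob = \mathrm{Cone}(s_\blob)$ of an extension class decomposes and which $E_C$ it contributes to. By Lemma \ref{decomp} and Lemma \ref{lem-E}, each $\gen{P_\blob}$ appearing can be rewritten via $E_{P_\blob} = E_{H_0(P_\blob), H_1(P_\blob)}$; one checks that $\mathrm{Cone}(s_\blob)$ for $s_\blob$ corresponding to an extension $0 \to B \to C \to A \to 0$ has $H_0 = C$ and $H_1 = 0$, so it contributes to $E_C$, and the multiplicity counting over isomorphism classes $C$ reproduces $|\Ext^1_\R(A,B)_C|$.

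The main obstacle will be the careful bookkeeping of the various powers of $v$ and $q$: one must combine the $v$-prefactor from Definition \ref{def:Hall}, the $q^{\mu}$ hidden in $\h$, the $v$-prefactors in the definitions of $E_{C_A}$, $E_{C_B}$, $E_C$, and the factor $v^{(\alpha,\cl{M_\blob})}$ picked up from \eqref{commute} when commuting $K_{-\cl{P_A}}$ past $\gen{C_B}$, and verify they all collapse to exactly $v^{\<A,B\>}$ times $\frac{1}{q^0} = 1$ correction (i.e.\ no residual $q$-power) in front of $\sum_C \frac{|\Ext^1_\R(A,B)_C|}{|\Hom_\R(A,B)|} E_C$. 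I expect this to hinge on the identity $\cl{C_A} = \cl{Q_A} - \cl{P_A} = \cl{A}$ (and similarly $\cl{C_B} = \cl{B}$), on bilinearity of the generalized Euler form over $K(\R)$, and on the relation $\<Q_A, P_B\> + \<P_A, Q_B\> - \<Q_A, Q_B\> - \<P_A, P_B\> = -\<A, B\>$ obtained by expanding $\<\cl{A},\cl{B}\> = \<\cl{Q_A} - \cl{P_A}, \cl{Q_B} - \cl{P_B}\>$; combined with the $q = v^2$ conversion of the $\mu$-term, everything should telescope. A secondary but routine point is confirming that the decomposition used to pass from $\gen{P_\blob}$ back to $E_C$ does not introduce extra acyclic summands affecting the coefficient, which follows from \eqref{EK} and Lemma \ref{lem-E}.
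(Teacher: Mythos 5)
Your proposal is correct and follows essentially the same route as the paper: expand $\gen{C_A}\ostar\gen{C_B}$ from Definition \ref{def:Hall}, compute $\mu(C_A,C_B)=\<\hat Q_A,\hat P_B\>$ so that $\h(C_A,C_B)=q^{\<\hat Q_A,\hat P_B\>}|\Hom_\R(A,B)|$, identify extensions of $C_A$ by $C_B$ with extensions of $A$ by $B$ via the cone construction and Lemmas \ref{ext_hom}, \ref{ext_hom2}(ii) (the cone has $d_0=0$, hence $H_1=0$ and $H_0=C$), and let the $v$-powers telescope using $\cl{C_A}=\cl{A}$, $\cl{C_B}=\cl{B}$ and bilinearity. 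The subtleties you flag (matching of extension classes so that the $M_\blob$-count collapses to $|\Ext^1_\R(A,B)_C|$, and the absence of residual $q$-powers) are exactly the points the paper checks.
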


\begin{proof}
Let $C_{A}, C_{B}$ be the complexes associated to $A, B$ in Definition \ref{def-eab}. 
Then using the formula
\[\h(C_A,C_B) = q^{\<\hat{Q}_A, \hat{P}_B\>} \cdot 
|\Hom_\R(A,B)|\]
together with the relations $\cl{A} = \hat Q_A-\hat P_A$ and 
$\cl{B} = \hat Q_B-\hat P_B$
in the Grothendieck group $K(\R)$, we have 
\begin{align*}
\gen{C_A} \ostar \gen{C_B} 
& = 
v^{\<\hat{P}_A,\hat{P}_B\>+\<\hat{Q}_A,\hat{Q}_B\>} 
\sum_{M_\blob \in \X} 
\frac{ \e(C_A,C_B)_{M_\blob}}{\h(C_A,C_B)} 
\, \gen{M_\blob}\\[1em]
&= 
v^{\<{A},{B}\>-\<\hat A,\hat P_B\> + \<\hat P_A,\hat B\> } 
\sum_{M_\blob \in \X} 
\frac{ |\Ext^1_{\C(\R)}(C_{A},C_{B})_{M_\blob}| }{|\Hom_\R(A,B)|} 
\, \gen{M_\blob}.
\end{align*}
It follows by \eqref{commute} that  
\begin{align}\label{E_mult}
E_{A}\ostar E_{B}
& = v^{ \<\hat{P}_A,\hat{A}\>+\<\hat{P}_B,\hat{B}\> + (\hat{P}_B,\hat{A})}
\cdot K_{-{P}_A\hat{\oplus}{P}_B}\ostar \gen{C_{A}}\ostar \gen{C_{B}} \nonumber
\\[1em]
&= v^{\<{A}, {B}\> + \<\hat{P}_A  + \hat{P}_B ,\, \hat{A} + \hat{B}\>} 
\sum_{M_\blob \in \X} 
\frac{ |\Ext^1_{\C(\R)}(C_{A},C_{B})_{M_\blob}| }{|\Hom_\R(A,B)|} 
\, K_{-{P}_A\hat{\oplus}{P}_B} \ostar \gen{M_\blob}.
\end{align}

Consider an extension 
\begin{equation}\label{M_ext}
0 \lra C_{B} \lra M_\blob \lra C_{A} \lra 0. 
\end{equation}
By Lemma \ref{ext_hom}, 
we may assume $M_\blob = \mathrm{Cone}(s_\blob)$ for some 
morphism $s_\blob: C_{A} \to C_{B}^\dag$, 
so that 
\[M_\blob =  \cplx{P_B\oplus P_A}{d_1}{d_0}{Q_B\oplus Q_A}, \]
where 
\[ d_1=\begin{pmatrix} f_B & s_1\\
0 & f_A \end{pmatrix}, 
\qquad
d_0=\begin{pmatrix} 0 & s_0\\
0 & 0 \end{pmatrix}. \]
Since $f_A, f_B$ are monomorphisms, so is $d_1$. 
Thus $d_1\circ d_0 =0$ implies that $s_0 = 0$. 
Setting  $C = H_0(M_\blob)$,  
it follows that \eqref{M_ext} induces an extension 
\[0\lra B \lra C \lra A \lra 0.\]
One may check that this extension agrees with 
the corresponding 
image of \eqref{M_ext} under the isomorphism 
\[\Ext^1_{\C(\R)}(C_{A},C_{B})\cong\Ext^1_\R(A,B) \]
given by Lemma \ref{ext_hom} and Lemma \ref{ext_hom2} (ii). 
It follows that
\[\sum_{H_0(M_\blob) = C} 
 |\Ext^1_{\C(\R)}(C_{A},C_{B})_{M_\blob}|
\ = \
 |\Ext^1_\R({A},{B})_{C}|.\]

Finally, notice that $\hat{C} = \hat{A}+ \hat{B}$ for 
any extension $C$ of $A$ by $B$.
Putting everything together shows that equation \eqref{E_mult} becomes 
 \begin{align*}
E_{A}\ostar E_{B}
&= v^{\<A,B\> }
\sum_{M_\blob\in \mathcal{X}} 
v^{\<\hat{P}_A  + \hat{P}_B ,\, \hat H_0(M_\blob) \>} 
\cdot 
\frac{ |\Ext^1_{\C(\R)}(C_{A},C_{B})_{M_\blob}| }{|\Hom_\R(A,B)|} 
\, K_{-{P}_A\hat{\oplus}{P}_B}\ostar
 \gen{M_\blob}
\\[1em]
&= v^{\<A,B\>} \sum_{C\in \Iso(\R)} 
\frac{ |\Ext^1_{\R}(A,B)_{C}| }{|\Hom_\R(A,B)|} \, E_{C}
\end{align*}
which completes the proof. 
\end{proof}

\begin{lem}\label{EF}
Let $A,B\in \A$. 
The following equations hold in $\DH(\R)$, 
 \smallskip
\begin{enumerate}[(i)]
\item $\displaystyle E_A \ostar F_B 
=  \sum_{A_1,B_1,B_2} 
v^{\<\cl{B}-\cl{B}_1 ,\, \cl{A} - \cl{B}\> } \, 
 g^{B}_{B_1,B_2}\, g^{A}_{B_2,A_1} a_{B_2} \cdot 
 K^\dag_{\cl{B}-\cl{B}_1} \ostar E_{A_1, B_1},$
\bigskip

\item $\displaystyle F_B \ostar E_A 
=  \sum_{A_1,A_2,B_1} 
 v^{\<\cl{A} - \cl{A}_1 ,\, \cl{B} - \cl{A}\> } \, 
 g^{A}_{A_1,A_2} g^{B}_{A_2,B_1} a_{A_2} \cdot 
K_{\cl{A}-\cl{A}_1} \ostar  E_{A_1,B_1},$
\end{enumerate}
where each sum runs over classes of objects 
in $\Iso(\A)$. 
\end{lem}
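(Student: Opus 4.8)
The plan is to prove (i) by a direct computation in $\DH(\R)$ and then to deduce (ii) from (i) using the shift involution $\dag$. Fix the resolutions $0\to P_A\xrightarrow{f_A}Q_A\to A\to 0$ and $0\to P_B\xrightarrow{f_B}Q_B\to B\to 0$ of Definition \ref{def-eab}, so that $C_A=C_{f_A}$ and $C_B^\dag=C_{f_B}^\dag$. Unwinding the definition of $E_{M_\blob}$ gives $E_A=v^{\<\hat P_A,\cl A\>}K_{-\hat P_A}\ostar\gen{C_A}$ and $F_B=v^{\<\hat P_B,\cl B\>}K^\dag_{-\hat P_B}\ostar\gen{C_B^\dag}$. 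Multiplying these and using Lemma \ref{cor} to move $\gen{C_A}$ past $K^\dag_{-\hat P_B}$ collects all acyclic generators on the left, reducing the problem to evaluating the genuine Hall product $\gen{C_A}\ostar\gen{C_B^\dag}$ in $\H(\C_\fin(\P))$ together with a bookkeeping of $v$-exponents.

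For the Hall product: from \eqref{mu} one gets $\mu(C_A,C_B^\dag)=\<\hat P_A,\hat Q_B\>$, and since $H_\blob(C_A)$ and $H_\blob(C_B^\dag)$ are concentrated in opposite degrees we have $\Hom_{\C(\R)}(H_\blob(C_A),H_\blob(C_B^\dag))=0$, hence $\h(C_A,C_B^\dag)=q^{\<\hat P_A,\hat Q_B\>}$. For the numerator, Lemma \ref{ext_hom} together with Lemma \ref{ext_hom2}(i) (and the involution $\dag$) yields natural bijections $\Ext^1_{\C(\R)}(C_A,C_B^\dag)\cong\Hom_{\Ho(\R)}(C_A,C_B)\cong\Hom_\R(A,B)$, under which $s\colon A\to B$ corresponds to the extension with middle term $\mathrm{Cone}(s_\blob)$; the long exact homology sequence of $0\to C_B^\dag\to\mathrm{Cone}(s_\blob)\to C_A\to 0$ then identifies $H_0(\mathrm{Cone}(s_\blob))\cong\ker s$ and $H_1(\mathrm{Cone}(s_\blob))\cong\cok s$ (all of which lie in $\A$, as $\A$ is closed under subobjects and quotients). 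Grouping these extensions by the homology pair $(A_1,B_1)=(\ker s,\cok s)$ and writing $B_2=\im s$, the number of $s$ with prescribed $(\ker s,\im s,\cok s)=(A_1,B_2,B_1)$ equals $g^A_{B_2,A_1}\,g^B_{B_1,B_2}\,a_{B_2}$ (choose $A'\subset A$ with $A'\cong A_1$, $A/A'\cong B_2$; choose $B'\subset B$ with $B'\cong B_2$, $B/B'\cong B_1$; and choose an isomorphism $A/A'\cong B'$). Thus $\sum_{H_\blob(M_\blob)=(A_1,B_1)}|\Ext^1_{\C(\R)}(C_A,C_B^\dag)_{M_\blob}|=\sum_{B_2}g^A_{B_2,A_1}g^B_{B_1,B_2}a_{B_2}$.

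To reassemble: for each middle term $M_\blob$ with homology $(A_1,B_1)$ we have $E_{M_\blob}=E_{A_1,B_1}$ by Lemma \ref{lem-E}, and re-expressing $\gen{M_\blob}$ in terms of $E_{A_1,B_1}$ requires the summand data $M_1^+,M_0^-$ of $\mathrm{Cone}(s_\blob)$, obtained by applying Lemma \ref{decomp} to the cone; one finds $\hat M_1^+=\hat P_A$ and $\hat M_0^-=\hat Q_B-\cl B_1$, so that after absorbing the generators already on the left the net acyclic factor is exactly $K^\dag_{\cl B-\cl B_1}$ (note $\cl B-\cl B_1=\cl B_2$). Collecting the powers of $v$ arising from the definitions of $E_A,F_B$, the commutation in Lemma \ref{cor}, the twist and the factor $1/\h$ in Definition \ref{def:Hall}, and the rewriting of $\gen{M_\blob}$, one checks that the total exponent is $\<\cl B-\cl B_1,\cl A-\cl B\>$, which gives (i). Finally, (ii) follows by applying the algebra automorphism $\dag$ to (i) with $A$ and $B$ interchanged, using $E_{A,B}^\dag=E_{B,A}$, $F_A=E_A^\dag$, and $(K^\dag_\alpha)^\dag=K_\alpha$; alternatively one repeats the computation with the sequence $0\to C_A\to M_\blob\to C_B^\dag\to 0$, whose extensions are parametrized by $\Hom_\R(B,A)$ and whose middle terms have homology $(\cok t,\ker t)$ for $t\colon B\to A$.

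The main obstacle is the $v$-exponent and $K$-factor bookkeeping in the last paragraph: it requires running the decomposition of Lemma \ref{decomp} explicitly on $\mathrm{Cone}(s_\blob)$, checking that the spurious $K$-part cancels and only $K^\dag_{\cl B-\cl B_1}$ survives, and then tracking this through the relations \eqref{eq1}, \eqref{eq2}, \eqref{commute} of the localization. One should also record that for fixed $A,B$ every sum is finite, since $|\Hom_\R(A,B)|<\infty$ bounds both the number of middle terms $M_\blob$ and the number of triples $(A_1,B_1,B_2)$ that contribute, so that the identities make sense in $\DH(\R)$.
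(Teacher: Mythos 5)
Your proposal is correct and follows essentially the same route as the paper's proof: you compute $\gen{C_A}\ostar\gen{C_B^\dag}$ with $\h(C_A,C_B^\dag)=q^{\<\hat P_A,\hat Q_B\>}$, identify the extensions with $\Hom_\R(A,B)$ via Lemmas \ref{ext_hom} and \ref{ext_hom2}, read off the homology of the cone as $(\ker s,\cok s)$, track the $K$-factors through the decomposition of the cone to isolate $K^\dag_{\cl{B}-\cl{B}_1}$, and obtain (ii) by applying $\dag$ exactly as the paper does. The only cosmetic difference is that you enumerate the maps $s$ with prescribed kernel, image and cokernel directly as $g^{A}_{B_2,A_1}\,g^{B}_{B_1,B_2}\,a_{B_2}$, whereas the paper cites this counting identity from Yanagida's paper.
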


\begin{proof}
(i) Again let $C_{A}, C_{B}$ be complexes associated to $A, B$ as in Definition \ref{def-eab}. 
By definition, the product 
$\gen{C_{A}} \ostar \gen{C^\dag_{B}}$ is equal to  
\begin{equation*}
v^{\<\hat Q_A, \hat P_B \>+\<\hat P_A,\hat Q_B\>} 
\sum_{M_\blob \in \X} 
\dfrac{\e(C_{A},C_{B}^\dag)_{M_\blob} } {\h(C_{A},C_{B}^\dag)} 
\, \gen{M_\blob}. 
\end{equation*}
Then 
 using 
$\mu(C_{A},C_{B}^\dag) = \<\hat P_A,\hat Q_B\>$,
 it is easy to check that 
\[\h(C_{A},C_{B}^\dag) 
= q^{\<\hat P_A,\hat Q_B\>}.
\]
%
This gives 
\begin{equation*}
\gen{C_{A}} \ostar \gen{C^\dag_{B}} 
 = v^{\<\hat A, \hat P_B \>-\<\hat P_A,\hat B\>} 
\sum_{M_\blob \in \X} 
\e(C_{A},C_{B}^\dag)_{M_\blob}  \cdot\gen{M_\blob}, 
\end{equation*}
where we have used the equalities  
$\cl{Q}_A = \cl{P}_A + \cl{A}$ and 
$\cl{Q}_B = \cl{P}_B + \cl{B}$ in 
$K(\R)$.
It thus follows by \eqref{commute} that  
\begin{align}\label{prod}
E_A \ostar F_B 
&=
v^{\<\hat P_A,\hat A \>+\<\hat P_B,\hat B\>-(\hat P_B,\hat A)} \cdot 
K_{-\cl{P}_A}\ostar K_{-\cl{P}_B}^\dag\ostar \gen{C_{A}} \ostar \gen{C^\dag_{B}} \nonumber
\\[3pt]
&= 
v^{\<\cl{P}_A -\cl{P}_B ,\cl{A}-\cl{B}\> } \cdot K_{-\cl{P}_A}\ostar K_{-\cl{P}_B}^\dag \ostar
\sum_{M_\blob \in \X} 
\e(C_{A},C_{B}^\dag)_{M_\blob}  \cdot
\gen{M_\blob}. 
\end{align}

Now suppose that $M_\blob$ is an extension of $C_{A}$ by $C_{B}^\dag$. 
 By Lemma \ref{ext_hom}, we may assume that $M_\blob= \mathrm{Cone}(s_\blob)$ 
for some $s_\blob \in \Hom_{\Ho(\R)}(C_A,C_B)$. 
The extension thus takes the form  
\[ \begin{tikzcd}[row sep = large]
      Q_{B} \rar["0", shift left] \dar["i_1"'] & P_{B} \lar["-f_{B}", shift left]  \dar["i_0"] 
   \\
      Q_{B} \oplus P_{A}  \rar["f_1", shift left] \dar["p_1"'] 
    & P_{B} \oplus Q_{A}  \lar["f_0", shift left] \dar["p_0"]
   \\
      P_{A} \rar[ "f_{A}", shift left] &  Q_{A} \lar["0", shift left]
\end{tikzcd} \]
where 
$f_0=\left(\begin{smallmatrix}\hspace{-.075cm}-f_{B}&s_{0}\\ \ 0&0\end{smallmatrix}\right)$, 
$f_1=\left(\begin{smallmatrix}0&s_{1}\\0&f_{A}\end{smallmatrix}\right)$, and $f_A, f_B$ are defined in \eqref{eqn-resolpqa}.
This extension induces an exact commutative diagram 
\begin{equation}\label{diagram}
\begin{tikzcd}[column sep=scriptsize]
	&  0  \rar \dar & H_0(M_\blob) \rar \dar  & H_0(C_A) \dar
\\
	&  P_B  \rar["{i}_0"] \dar["-f_B"']  &  M_0/ \im f_1  \rar["{p}_0"] \dar["f_0"]	& Q_A/ \im f_A  \rar \dar &  0
\\ 
0  \rar  &  Q_B  \rar["{i}_1"] \dar 	&  \ker f_1  \rar["p_1"] \dar  &  0  \dar
\\
	&  H_1(C_B^\dag)  \rar  &  H_1(M_\blob)  \rar  &  0,
\end{tikzcd}
\end{equation}
where ``$i_0$" denotes the map induced by $i_0$, etc. 

Since the map induced by ${i}_1$ in \eqref{diagram} is an isomorphism,  
it follows that the direct summands in the 
 decomposition $M_\blob = M_\blob^+ \oplus M_\blob^-$ of Lemma \ref{decomp}
have the form 
\[M_\blob^+ = (\begin{tikzcd}[cramped, column sep=scriptsize] P_A \ar[r, "f^+_1", shift left] & M_0^+ \ar[l, "0", shift left] 
\end{tikzcd}), \quad 
M_\blob^- = (\begin{tikzcd}[cramped, column sep=scriptsize] Q_B \ar[r, "0", shift left] & M_0^- \ar[l, "f^-_0", shift left] 
\end{tikzcd}) \]
where the maps $f_0^-,f_1^+$ are obtained from $f_0,f_1$ by restriction. 

The objects $A_1 = H_0(M_\blob)$ and $B_1=H_1(M_\blob)$ thus have projective resolutions 
\[\begin{tikzcd}[column sep=2em]
 P_A \rar["f_1^+"]  &  M_0^+ \rar   &  A_1 \rar  & 0, 
\end{tikzcd}
\qquad 
\begin{tikzcd}[column sep=2em]
M_0^- \rar[ "f_0^-" ]  &  Q_B \rar  &  B_1 \rar  &  0 
\end{tikzcd}\] 
respectively. 
This gives relations 
\[\cl{M_1^+} = \cl{P_A}, \qquad \quad \cl{M_0^-} = \cl{Q_B} - \cl{B_1}
= \cl{P_B} + \cl{B} - \cl{B_1}\] 
in $K(\R)$. 
Substituting in  \eqref{prod}, we have 
\begin{align}
\label{ree}
E_A \ostar F_B \ =& \, 
\sum_{M_\blob \in \X} 
v^{\<\cl{M_1^+} -\cl{M_0^-}  + \cl{B} - \cl{B_1},\cl{A} - \cl{B}\>} \cdot 
\e(C_{A},C_{B}^\dag)_{M_\blob}  \cdot
K_{-\cl{M_0^+} } \ostar 
K_{-\cl{M_0^-}  + \cl{B} - \cl{B_1}}^\dag
\ostar
\gen{M_\blob} \nonumber 
\\[3pt ]
=&\, \sum_{M_\blob \in \X} 
v^{\<\cl{B} - \cl{B_1},\cl{A} - \cl{B}\>} \cdot 
\e(C_{A},C_{B}^\dag)_{M_\blob}  \cdot
K_{\cl{B} - \cl{B_1}}^\dag
\ostar
E_{M_\blob}.
\end{align}

One may check directly using \eqref{diagram} that the 
map $s:A\to B$ induced by \eqref{homology}  
coincides with the connecting homomorphism 
$H_0(C_A) \xrightarrow{\, \delta\, } H_1(C_B)$ 
in the long exact sequence of cohomology. 
In particular, note that 
$H_0(M_\blob) \simeq \ker s$, and $H_1(M_\blob) \simeq \cok s$.

Hence, we may conclude that 
\begin{equation}\label{ree1}
 \sum_{\substack{P_\blob \in \X \\[1mm] H_0(P_\blob)\simeq A_1,\, H_1(P_\blob) \simeq B_1}}
\hspace{-3em}
 \e(C_{A},C_{B}^\dag)_{P_\blob} \cdot E_{P_\blob} \ = \
|\{ h \in \Hom_\R(A,B) \, \mid\, \ker h \simeq A_1, \cok h \simeq B_1\}| 
\cdot E_{A_1,B_1}. 
\end{equation}
By the equality on  \cite[p.984]{Ya}, 
the preceding equation may be rewritten as 
\begin{equation}\label{ree2}
\sum_{\substack{ H_0(P_\blob)\simeq A_1, \\[1mm]
 H_1(P_\blob) \simeq B_1}}
\hspace{-3pt}
\e(C_{A},C_{B}^\dag)_{P_\blob} \cdot E_{P_\blob} \ =  
\sum_{B_2\in \Iso(\A)} g^{B}_{B_1,B_2} g^{A}_{B_2,A_1} a_{B_2} 
\cdot E_{A_1,B_1} .
\end{equation}
The equality in part (i) is now obtained  by combining \eqref{prod}, \eqref{ree2}  and \eqref{eq:assoc}. 

(ii) This follows by interchanging $A$ and $B$ in (i) and taking $\dag$ on both sides.
\end{proof}

\subsection{Embedding $\widetilde{\H_v}(\R)$ in $\DH(\R)$}
In this subsection we make some more precise statements about the relationships between the various Hall algebras we have been considering.

Consider the injective linear map  
$I_+\colon \widetilde{\H_v}(\R)\longinto \DH(\R)$ defined by  
\[ \gen{A} \ast K_\alpha \mapsto E_A \ostar K_\alpha,\] 
and let $\DH^+(\R) \subset \DH(\R)$ denote the image of 
this map. 

\begin{prop}
\label{embed}
The restriction of multiplication in $\DH(\R)$ makes the subspace $\DH^+(\R)$
into an associative algebra, and the embedding 
$I_+\colon \widetilde{\H_v}(\R)\longinto \DH(\R)$
restricts to an isomorphism $\widetilde{\H_v}(\R) \cong\DH^+(\R)$
of (unital) associative algebras. 
\end{prop}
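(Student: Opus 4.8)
The plan is to verify directly that $I_+$ intertwines the product $\ast$ on $\widetilde{\H_v}(\R)$ with the product $\ostar$ on $\DH(\R)$, from which both associativity of $\ostar$ restricted to $\DH^+(\R)$ and the fact that $I_+$ is an algebra isomorphism onto $\DH^+(\R)$ follow at once. Since $\widetilde{\H_v}(\R)$ is generated as an algebra by the elements $\gen{A}$ ($A\in\Iso(\A)$) together with the $K_\alpha$ ($\alpha\in K(\R)$), it suffices to check the multiplicativity of $I_+$ on these generators: namely that $E_A\ostar E_B = I_+(\gen{A}\ast\gen{B})$, that $K_\alpha\ostar K_\beta = K_{\alpha+\beta} = I_+(K_\alpha\ast K_\beta)$, and that $E_A$ and $K_\alpha$ satisfy the commutation relation \eqref{extend2} inside $\DH(\R)$.

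First I would record that Lemma \ref{EE} gives exactly
\[
E_A\ostar E_B = v^{\<A,B\>}\sum_{C\in\Iso(\R)}\frac{|\Ext^1_\R(A,B)_C|}{|\Hom_\R(A,B)|}\,E_C,
\]
and, comparing with the twisted Hall product \eqref{eqn-ttw}--\eqref{defprod} on $\H_v(\A)$ together with Proposition \ref{prop-nonzero} (which ensures $C\in\A$ whenever $A,B\in\A$, since $\A$ is closed under extensions and the relevant $\Ext$ sets are the same computed in $\R$ or in $\A$), this is precisely $I_+(\gen{A}\ast\gen{B})$. Next, the $K$-relations: the identity $K_\alpha\ostar K_\beta = K_{\alpha+\beta}$ is built into the group homomorphism $K\colon K(\R)\to\DH_0(\R)^\times$ of Section \ref{ss:local}, and the relation $K_\alpha\ostar E_A\ostar K_{-\alpha} = v^{(\alpha,\cl A)}E_A$ follows from \eqref{commute} (note $\cl{C_A}=\cl A$, so $\cl{E_A}$ has the right degree) — this matches the second relation in \eqref{extend2}. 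Having checked multiplicativity on generators, one concludes that $\DH^+(\R)$, being the linear span of the products $E_A\ostar K_\alpha$ which is closed under $\ostar$ by the above, is a subalgebra, that $\ostar$ is associative on it (inherited from $\ast$ via the bijection $I_+$, or equivalently because each such product lands in the span of $E_C\ostar K_\gamma$ and the three-fold products can be rebracketed using the established relations), and that $I_+$ is the desired algebra isomorphism; the unit $\gen{0}=[0]$ maps to $E_0 = \gen{K_0}=1$.

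The main obstacle is the associativity claim itself: at this stage of the paper associativity of $\ostar$ on all of $\DH(\R)$ has \emph{not} been established (that is the point of Section \ref{sec:dbl}), so I cannot simply invoke it to rebracket triple products of the $E_A\ostar K_\alpha$. The resolution is that Lemma \ref{EE} and \eqref{commute} together show $\DH^+(\R)$ is spanned by $\{E_C\ostar K_\gamma\}$ and give closed formulas for products of such elements that visibly coincide, term by term, with the structure constants of the associative algebra $\widetilde{\H_v}(\R)$; hence $I_+$ transports the known associativity of $\ast$ to $\ostar|_{\DH^+(\R)}$ without any prior global associativity input. Concretely one shows $(E_A\ostar E_B)\ostar E_C$ and $E_A\ostar(E_B\ostar E_C)$ both equal $I_+\big((\gen A\ast\gen B)\ast\gen C\big)=I_+\big(\gen A\ast(\gen B\ast\gen C)\big)$ by two applications of Lemma \ref{EE} in each grouping — this uses that Lemma \ref{EE} expresses the product of two $E$'s again as a $\CC$-combination of single $E$'s, so no ambiguous triple products of the generators $\gen{M_\blob}$ ever arise. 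The analogous bookkeeping with the $K_\alpha$ inserted is routine given \eqref{commute} and \eqref{eq3}.
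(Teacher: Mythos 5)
Your proposal is correct and follows essentially the same route as the paper, which likewise deduces the result from Lemma \ref{EE} together with a comparison of the relations \eqref{extend2} with \eqref{commute}. Your explicit discussion of why associativity on $\DH^+(\R)$ can be transported from $\widetilde{\H_v}(\R)$ without invoking global associativity of $\ostar$ (namely, that Lemma \ref{EE} closes products of two spanning elements back into the span, so triple products are unambiguous by bilinearity) is a careful elaboration of a point the paper's terse proof leaves implicit.
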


\begin{proof}
The result follows from  Lemma \ref{EE}, together with a comparison of the 
relations  \eqref{extend} defining the extended Hall algebra with the relation \eqref{commute} 
in the localized Hall algebra. 
\end{proof}

Composing with the involution $\dag$ gives another embedding
\[{I}_-\colon \widetilde{\H_v}(\R)\longinto \DH(\R),\]
defined by $\gen{B} \ast K_\beta \mapsto F_B \ostar K_\beta^\dag$, 
whose image $\DH^-(\R)$ is again an associative algebra such that 
%
$I_-$ restricts to an algebra isomorphism $\widetilde{H_v}(\R)\cong \DH^-(\R)$. 

\subsection{Drinfeld double of $\widetilde{\H_v}(\R)$ } \label{subsec-Dri}

\begin{lem}
\label{ident}
The multiplication map $\nabla\colon a\tensor b \mapsto {I}_+(a) \ostar {I}_-(b)$ defines  an isomorphism of vector spaces 
\[\nabla\colon \widetilde{\H_v}(\R) \tensor_\CC \widetilde{\H_v}(\R) \lra \DH(\R).\]
\end{lem}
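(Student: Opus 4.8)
The plan is to prove $\nabla$ is an isomorphism of vector spaces by exhibiting compatible filtrations (or gradings) on both sides and showing $\nabla$ respects the associated graded pieces, where it becomes manifestly bijective. First I would recall that by Proposition \ref{free}, $\DH(\R)$ is free as a right $\DH_0(\R)$-module with basis $\{E_{A,B} : A,B\in\Iso(\A)\}$, and that $\DH_0(\R)$ has $\CC$-basis $\{K_\alpha \ostar K_\beta^\dag : \alpha,\beta\in K(\R)\}$. Thus $\DH(\R)$ has $\CC$-basis $\{E_{A,B}\ostar K_\alpha \ostar K_\beta^\dag\}$. On the other side, $\widetilde{\H_v}(\R)\tensor_\CC\widetilde{\H_v}(\R)$ has $\CC$-basis $\{(\gen{A}\ast K_\alpha)\tensor(\gen{B}\ast K_\beta)\}$ indexed by the same data $(A,B,\alpha,\beta)\in\Iso(\A)^2\times K(\R)^2$. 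So the claim is a statement about a linear map between spaces with equinumerous natural bases, and it suffices to show the map is either injective or surjective.

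Next I would compute $\nabla$ on basis elements: $\nabla\big((\gen{A}\ast K_\alpha)\tensor(\gen{B}\ast K_\beta)\big) = I_+(\gen{A}\ast K_\alpha)\ostar I_-(\gen{B}\ast K_\beta) = (E_A\ostar K_\alpha)\ostar(F_B\ostar K_\beta^\dag)$. Using the commutation relation \eqref{commute} to move the $K$'s past each other and to the right, this equals a nonzero scalar times $(E_A\ostar F_B)\ostar K_\alpha\ostar K_\beta^\dag$ up to adjusting $\alpha,\beta$ by fixed elements depending on $A,B$. Now apply Lemma \ref{EF}(i), which expands $E_A\ostar F_B$ as a sum over triples $A_1,B_1,B_2$ of terms $v^{(\cdots)} g^B_{B_1,B_2} g^A_{B_2,A_1} a_{B_2}\cdot K^\dag_{\cl{B}-\cl{B_1}}\ostar E_{A_1,B_1}$. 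So on the chosen basis, $\nabla$ is "upper triangular" with respect to the partial order \eqref{order} on $K(\A)$: the leading term (with $B_1=B$, forcing $B_2=0$, $A_1=A$, and coefficient $g^B_{B,0}g^A_{0,A}a_0 = 1$) is a nonzero scalar multiple of $E_{A,B}\ostar K_{\alpha'}\ostar K_{\beta'}^\dag$, and all other terms involve $B_1$ with $\cl{B_1} < \cl{B}$ strictly (hence smaller $\Iso(\A)^2$-grading in the sense that $H_1$ has a strictly smaller class). Since the order $\leq$ is a genuine partial order by Proposition \ref{prop-nonzero} (no infinite descending chains issue arises because each $\gen{A}$ has finitely many subobjects contributing, and the grading set is a disjoint union indexed by $\Iso(\A)^2$), the transition matrix between the two bases is triangular with nonzero (indeed unit) diagonal entries, hence invertible. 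Therefore $\nabla$ is a bijection.

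The main obstacle I anticipate is making the triangularity argument rigorous: one must verify that for fixed $(A,B)$, only finitely many pairs $(A_1,B_1)$ occur with nonzero coefficient in Lemma \ref{EF}(i) (this follows since $g^B_{B_1,B_2}\neq 0$ forces $B_1$ to be a subquotient of $B$, and there are finitely many such up to isomorphism — using that $\A$ is Hom-finite and hence has finitely many subobjects of a given object, or more directly that $B$ has finite length since it lies in $\A$ which is Krull--Schmidt Hom-finite), and that the ordering on the index set $\Iso(\A)^2\times K(\R)^2$ induced by "$\cl{B_1}\leq\cl{B}$ with equality only in the leading term" is well-founded so that triangularity genuinely implies invertibility. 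An alternative, possibly cleaner route avoiding well-foundedness concerns: filter both sides by the $\Iso(\A)^2$-grading already established in the proof of Proposition \ref{free}, observe $\nabla$ is filtered, and show the induced map on each graded piece is an isomorphism onto the corresponding free rank-one $\DH_0(\R)$-module — here the leading-term computation shows the graded map sends $(\gen{A}\ast K_\alpha)\tensor(\gen{B}\ast K_\beta)$ to a unit multiple of $E_{A,B}\ostar K_{\alpha'}\ostar K_{\beta'}^\dag$, which is an isomorphism of the relevant $\CC$-vector spaces by dimension count, and then conclude $\nabla$ itself is an isomorphism.
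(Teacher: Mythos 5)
Your proposal is correct and follows essentially the same strategy as the paper: both compare the basis $\{E_{A,B}\ostar K_\alpha\ostar K_\beta^\dag\}$ from Proposition \ref{free} with the obvious basis of the tensor product, filter by $\bc_\A(A)+\bc_\A(B)$ using the partial order \eqref{order}, and show that $\nabla$ sends each basis element to a unit multiple of the corresponding $E_{A,B}\ostar K_{\alpha'}\ostar K_{\beta'}^\dag$ plus strictly lower-order terms, invoking Proposition \ref{prop-nonzero} to see that the correction terms (indexed by a nonzero $B_2$, resp.\ the object $L$ in the paper's homology long exact sequence) genuinely drop in degree. The only cosmetic difference is that you extract the leading term from the already-proved Lemma \ref{EF}(i), whereas the paper re-derives it directly from the long exact sequence in homology while establishing the more general filtration property $\DH_{\leq\gamma}\ostar\DH_{\leq\delta}\subset\DH_{\leq\gamma+\delta}$.
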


\begin{proof}
It follows from Proposition \ref{free} 
that the algebra $\DH(\R)$ has a $\CC$-basis consisting of elements 
\[E_{A,B} \ostar K_\alpha \ostar 
K^\dag_\beta, \quad A,B\in \Iso(\A), \quad \alpha,\beta\in K(\R).\]
Recall the 
partial order on $K(\A)$  defined in \eqref{order} 
and define  
$\DH_{\leq \gamma}$  for $\gamma \in K(\A)$  to be the subspace  of $\DH(\R)$ spanned by elements from this basis for which 
$\bc_\A(A)+\bc_\A(B)\leq \gamma$. 
We claim that 
\begin{equation}
\label{filter}
\DH_{\leq \gamma} \ostar \DH_{\leq \delta}
\subset \DH_{\leq \gamma+\delta}, \qquad \gamma,\delta \in K(\A),
\end{equation}
so that this defines a filtration on $\DH(\R)$.

Suppose that $M_\blob, N_\blob \in \C_\fin(\P)$ and let  
\[\gamma = 
\bc_\A(H_0(M_\blob)) + \bc_\A(H_1(M_\blob)), \quad 
\delta = 
\bc_\A(H_0(N_\blob)) + \bc_\A(H_1(N_\blob)).\] 
Then consider an extension of complexes 
\[0\lra M_\blob\lra P_\blob\lra N_\blob\lra 0.\]
The long exact sequence in homology can be split to give two long exact sequences
\begin{gather*}
0\lra K\lra H_0(M_\blob)\lra H_0(P_\blob)\lra H_0(N_\blob)\lra L\lra 0, \\
0\lra L\lra H_1(M_\blob)\lra H_1(P_\blob)\lra H_1(N_\blob)\lra K\lra 0
\end{gather*}
for some objects $K,L\in \A$. 
It follows that there is a relation  in $K(\A)$, 
\[\gamma + \delta = \bc_\A(H_0(P_\blob)) +  \bc_\A(H_1(P_\blob)) +2(\bc_\A(K)+\bc_\A(L))\]
which proves \eqref{filter}.

Suppose now that $N_\blob=C_A$ and $M_\blob=C_B^\dag$ for some objects $A,B\in \A$. Then $K=0$, and by Lemmas \ref{root}, \ref{ext_hom}, and \ref{ext_hom2}
\[\Ext^1_{\C(\R)}(N_\blob,M_\blob)=\Hom_\R(A,B),\]
and the extension class is completely determined by the connecting morphism 
$H_0(N_\blob)\to H_1(M_\blob)$.  
By Proposition \ref{prop-nonzero}, we therefore know 
that $\bc_\A(L)=0$ exactly when  the extension is trivial. 
It follows that in the  graded algebra associated to the filtered algebra $\DH(\R)$, one has
a relation
\[\nabla(\gen{A} * K_\alpha \tensor \gen{B} * K_\beta) = 
v^{-(\alpha, \cl{B})}  \cdot E_{A, B} \ostar K_{\alpha} \ostar K_{\beta}^\dag.\]
It follows that $\nabla$ takes a basis to a basis and is hence  an isomorphism. 
\end{proof}

As a corollary, we have 
\begin{cor}
\label{cor:sec1:basis}
The algebra $\DH(\R)$ has a linear basis consisting of elements 
\begin{align*}
E_A \ostar  K_\alpha \ostar F_B \ostar K_{\beta}^\dag, \quad
A,B \in \Iso(\A),\ \alpha,\beta \in K(\R).
\end{align*}
\end{cor}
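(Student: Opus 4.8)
The plan is to derive Corollary~\ref{cor:sec1:basis} as an immediate consequence of Lemma~\ref{ident} together with Proposition~\ref{embed} and the definitions of the embeddings $I_+$ and $I_-$. First I would recall that, by Green's and Xiao's results recalled in Section~\ref{ss:Extend-1}, the extended Hall algebra $\widetilde{\H_v}(\R)$ has a $\CC$-basis consisting of the elements $\gen{A}\ast K_\alpha$ with $A\in\Iso(\A)$ and $\alpha\in K(\R)$; this follows from the fact that the multiplication map $\H_v(\A)\otimes_\CC\CC[K(\R)]\to\widetilde{\H_v}(\R)$ is a vector space isomorphism and that the $\gen{A}$ form a basis of $\H_v(\A)$. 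Consequently, $\widetilde{\H_v}(\R)\otimes_\CC\widetilde{\H_v}(\R)$ has the basis $(\gen{A}\ast K_\alpha)\otimes(\gen{B}\ast K_\beta)$ indexed by $A,B\in\Iso(\A)$ and $\alpha,\beta\in K(\R)$.

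Next I would apply the isomorphism $\nabla$ of Lemma~\ref{ident} to this tensor-product basis. By the definition $\nabla(a\otimes b)=I_+(a)\ostar I_-(b)$, and by the explicit formulas for $I_+$ and $I_-$ from Section~\ref{subsec-Dri}, namely $I_+(\gen{A}\ast K_\alpha)=E_A\ostar K_\alpha$ and $I_-(\gen{B}\ast K_\beta)=F_B\ostar K_\beta^\dag$, we obtain
\[
\nabla\big((\gen{A}\ast K_\alpha)\otimes(\gen{B}\ast K_\beta)\big)
= E_A\ostar K_\alpha\ostar F_B\ostar K_\beta^\dag.
\]
Since $\nabla$ is a vector space isomorphism, it carries the basis on the left to a basis of $\DH(\R)$ on the right, which is precisely the asserted statement.

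There is essentially no obstacle here: the entire content has already been established in Lemma~\ref{ident}, whose proof carries out the genuine work (constructing the filtration on $\DH(\R)$ and identifying its associated graded via Proposition~\ref{free}). The only point worth a sentence of care is to confirm that the elements $E_A\ostar K_\alpha\ostar F_B\ostar K_\beta^\dag$ appearing here match the indexing claimed—i.e.\ that $A$ and $B$ independently range over $\Iso(\A)$ while $\alpha,\beta$ independently range over $K(\R)$—which is immediate from the bijective correspondence with the tensor-product basis. So the proof is a two-line deduction: invoke the basis of $\widetilde{\H_v}(\R)$, then push it through $\nabla$ using the formulas for $I_\pm$.
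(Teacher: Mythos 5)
Your proof is correct and is exactly the route the paper takes: the corollary is stated immediately after Lemma \ref{ident} as a direct consequence, obtained by pushing the tensor-product basis $(\gen{A}\ast K_\alpha)\otimes(\gen{B}\ast K_\beta)$ of $\widetilde{\H_v}(\R)\otimes_\CC\widetilde{\H_v}(\R)$ through the isomorphism $\nabla$ using the formulas for $I_\pm$. Nothing is missing.
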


Now we  state the main result of this section. 

\begin{thm}
\label{thm:Drinf}
Suppose that $\A\subset \R$ 
satisfies conditions \eqref{sigma1} and \eqref{sigma2}. 
Then the algebra $\DH(\R)$ is isomorphic to the Drinfeld double of 
the bialgebra $\widetilde{\H_v}(\R)$.
\end{thm}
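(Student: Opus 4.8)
The strategy is to verify that the vector-space isomorphism $\nabla\colon \widetilde{\H_v}(\R)\otimes_\CC\widetilde{\H_v}(\R)\to\DH(\R)$ of Lemma~\ref{ident} transports the Drinfeld double multiplication on $\widetilde{\H_v}(\R)\otimes\widetilde{\H_v}(\R)$ (characterized by (D1)--(D3), equivalently by \eqref{eq:Drinf}) to the multiplication $\ostar$ on $\DH(\R)$. First I would record what is already in hand: by Proposition~\ref{embed} the map $I_+$ is an algebra isomorphism onto $\DH^+(\R)$, and $I_-$ an algebra isomorphism onto $\DH^-(\R)$; thus $\nabla$ restricted to either tensor factor realizes the two embeddings of $\widetilde{\H_v}(\R)$ into the double required by (D1), and the mixed product $I_+(a)\ostar I_-(b)=\nabla(a\otimes b)$ is by definition the ``$a\otimes b$'' of (D2). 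So (D1) and (D2) are immediate, and the entire content is (D3): the ``wrong-order'' relation $F_B\ostar E_A$ (together with the $K_\alpha$, $K_\beta^\dag$ factors) must match the right-hand side of (D3), i.e. the expression $\sum (b_{(1)},a_{(3)})_H(\sigma^{-1}(b_{(3)}),a_{(1)})_H\cdot a_{(2)}\otimes b_{(2)}$.

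The key computation is therefore to expand both sides of (D3) (or of the equivalent \eqref{eq:Drinf}) on generators $a=\gen{A}$, $b=\gen{B}$ with $A,B\in\Iso(\A)$, since these together with the $K_\alpha$'s generate $\widetilde{\H_v}(\R)$ and the identities involving only $K$'s are the trivial commutation relations \eqref{eq4}, \eqref{commute}. On the Hall-algebra side, $\Delta$ and $\Delta^2$ of $\gen{A}$ are given by Green's formula, so $\Delta^2(\gen{B})$ unpacks into a sum over $B_1,B_2,\dots$ with structure constants $g^{B}_{B_1,\cdot}$ and powers of $v$ coming from the Euler form; applying the Hopf pairing $(\cdot,\cdot)_H$ of Definition~\ref{df:hp}, which is diagonal with value $v^{(\alpha,\beta)}a_A\delta_{A,B}$, collapses the triple sum, and Xiao's formula for $\sigma^{-1}$ (valid here because \eqref{sigma1} holds, with the whole expression making sense in $\widetilde{\H_v}(\R)$ rather than a completion because \eqref{sigma2} holds) introduces the anti-equivalence combinatorics. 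On the localized-Hall-algebra side, the product $F_B\ostar E_A$ is computed by Lemma~\ref{EF}(ii): it equals $\sum_{A_1,A_2,B_1} v^{\langle\cl{A}-\cl{A}_1,\cl{B}-\cl{A}\rangle}g^{A}_{A_1,A_2}g^{B}_{A_2,B_1}a_{A_2}\cdot K_{\cl{A}-\cl{A}_1}\ostar E_{A_1,B_1}$, and using $E_{A_1,B_1}=\nabla(\gen{A_1}*K_{\cdot}\otimes\gen{B_1}*K_{\cdot})$ up to the explicit power of $v$ recorded in the last display of the proof of Lemma~\ref{ident}, one rewrites this purely in terms of $\nabla$ of tensors. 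Matching the two sides then amounts to checking an identity among the structure constants $g$ and the powers of $v$; this is exactly the type of combinatorial identity (relating $g^{A}_{A_1,A_2}g^{B}_{A_2,B_1}a_{A_2}$ to a pairing-weighted sum via the associativity relation \eqref{eq:assoc}) that already appeared in the proof of Lemma~\ref{EF}, so the same manipulations — essentially Yanagida's computation in \cite{Ya}, cf. the citation to \cite[p.984]{Ya} — apply.

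\emph{The main obstacle} is the bookkeeping of the $v$-powers and the $K_\alpha$/$K_\beta^\dag$ factors: one must be scrupulous that the twist $v^{\langle\cl{X},\cl{Y}\rangle}$ in the definition of $\ostar$, the symmetric-form twists in the relations \eqref{extend2} and \eqref{commute}, and the $v^{(\alpha,\beta)}$ in the Hopf pairing conspire exactly so that the Drinfeld-double formula (D3) comes out, rather than off by some coboundary. Concretely, after reducing to generators one has to verify that conjugating $E_{A_1,B_1}$ past the $K$-factors produced by Lemma~\ref{EF}(ii) yields precisely the term $a_{(2)}\otimes b_{(2)}$ weighted by $(b_{(1)},a_{(3)})_H(\sigma^{-1}(b_{(3)}),a_{(1)})_H$; checking this requires identifying $\cl{A}-\cl{A}_1$ and $\cl{B}-\cl{B}_1$ with the Grothendieck classes attached to $a_{(1)},a_{(3)},b_{(1)},b_{(3)}$ in Green's coproduct, which is where conditions \eqref{sigma1}--\eqref{sigma2} are genuinely used (to guarantee $\sigma^{-1}$, hence the whole right side of (D3), lands in $\widetilde{\H_v}(\R)\otimes\widetilde{\H_v}(\R)$ and not merely its completion, so that $\nabla$ can be applied). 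Once the generator case is settled, associativity and well-definedness of $\circ$ on all of $\widetilde{\H_v}(\R)\otimes\widetilde{\H_v}(\R)$ follow from the universal property of the Drinfeld double (the argument alluded to after \eqref{eq:Drinf}, modeled on \cite[Lemma 3.2.2]{Jo}), and transporting along $\nabla$ gives the claimed isomorphism; in particular this simultaneously re-proves that $\ostar$ on $\DH(\R)$ is associative, which is the content of the promised Corollary~\ref{cor-associative}.
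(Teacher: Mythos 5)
Your proposal matches the paper's proof in all essentials: reduce via Lemma \ref{ident} and Corollary \ref{cor:sec1:basis} to verifying the identity \eqref{eq:Drinf} on the generators $[A]K_\alpha$, $[B]K_\beta$, expand Green's coproduct, collapse with the Hopf pairing, compute the mixed products with Lemma \ref{EF}, and match the two sides using the associativity identity \eqref{eq:assoc}. The only cosmetic difference is that the paper works exclusively with the $\sigma^{-1}$-free form \eqref{eq:Drinf} (so \eqref{sigma1}--\eqref{sigma2} enter only to make the Drinfeld double well-defined, not in the computation itself), which you also flag as the equivalent route.
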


\begin{proof}
Because of the description 
of the basis of $\DH(\R)$ 
(Corollary \ref{cor:sec1:basis})
and the definition of Drinfeld double, 
the proof of the theorem 
is reduced to check equation \eqref{eq:Drinf} 
for the elements 
consisting of the basis of $\widetilde{\H_v}(\R)$.

Let us write equation \eqref{eq:Drinf} in the present situation:
\begin{equation}
\label{eq:dd1}
  \sum (a_{2},b_{1})_H \cdot 
  {I}_{+}(a_{1}) \ostar {I}_{-}(b_{2})
\ \stackrel{?}{=}\ 
  \sum (a_{1},b_{2})_H \cdot 
   {I}_{-}(b_{1}) \ostar {I}_{+}(a_{2}).
\end{equation}
Now let $A,B \in \A$ and $\alpha, \beta \in K(\R)$.  Let us write  
\begin{align*}
 \Delta([A] K_\alpha)&=
  \sum_{A_1,A_2} v^{\<A_1,A_2\>} 
  g_{A_1,A_2}^A   \cdot 
   ([A_1] K_{\cl{A_2}+\alpha})\otimes ([A_2] K_\alpha),
\\
 \Delta([B] K_\beta)&=
  \sum_{B_2,B_1} v^{\<B_2,B_1\>} 
  g_{B_2,B_1}^B  \cdot 
   ([B_2] K_{\cl{B_1}+\beta}) \otimes ([B_1]  K_\beta).
\end{align*}

By the Hopf pairing (Definition \ref{df:hp} and Proposition \ref{prop-Hopf-pairing}) 
and \eqref{commute}, the left hand side of \eqref{eq:dd1} becomes
\begin{align*}
 \text{LHS of } \eqref{eq:dd1}
&=\sum_{A_1,A_2,B_1,B_2}
   v^{\<A_1,A_2\>} 
  g_{A_1,A_2}^A 
   v^{\<B_2,B_1\>} 
  g_{B_2,B_1}^B
     ([A_2]  K_{\alpha},[B_2]  K_{\cl{B_1}+\beta} )_H  \\
& \hspace{3cm}     \cdot 
   E_{A_1} \ostar K_{\cl{A_2} + \alpha} \ostar F_{B_1} \ostar K^\dag_{\beta}
\\[.5cm]
=\ v^{(\alpha,\beta)} &\sum_{A_1,A_2,B_1,B_2}
   v^{\<A_1,A_2\>+\<B_2,B_1\>} 
  g_{A_1,A_2}^A 
  g_{B_2,B_1}^B
([A_2],[B_2])_H  \\
& \hspace{3cm}     \cdot 
   E_{A_1} \ostar K_{\cl{A_2}} \ostar F_{B_1} \ostar K_{\alpha} \ostar K^\dag_{\beta} .
\end{align*}
Similarly, the right hand side  becomes
\begin{align*}
& \text{RHS of } \eqref{eq:dd1} \\
&=\ v^{(\alpha,\beta)} \sum_{A_1,A_2,B_1,B_2} 
 v^{\<A_1,A_2\>+\<B_2,B_1\>} 
  g_{A_1,A_2}^A g_{B_2,B_1}^B ([A_1],[B_1])_H  \cdot    
   F_{B_2} \ostar K^\dag_{\cl{B_1}} \ostar E_{A_2} \ostar K_{\alpha} \ostar K^\dag_{\beta} . 
\end{align*}

After removing the term $v^{(\alpha,\beta)} \cdot K_{\alpha} \ostar K^\dag_{\beta}$ 
from both sides,  
equation \eqref{eq:dd1} reduces to 
\begin{equation}
\label{eq:dd2}
\begin{split}
\sum_{A_1,A_2,B_1,B_2}
&   v^{\<A_1,A_2\>+\<B_2,B_1\>} 
  g_{A_1,A_2}^A 
  g_{B_2,B_1}^B
([A_2],[B_2])_H  \cdot 
   E_{A_1} \ostar K_{\cl{A_2}} \ostar F_{B_1}
   \\
& \stackrel{?}{=}
 \sum_{A_1,A_2,B_1,B_2}
 v^{\<A_1,A_2\>+\<B_2,B_1\>} 
  g_{A_1,A_2}^A g_{B_2,B_1}^B ([A_1],[B_1])_H  \cdot    
   F_{B_2} \ostar K^\dag_{\cl{B_1}} \ostar E_{A_2}. 
\end{split}
\end{equation}
By Definition \ref{df:hp} and \eqref{commute},
the left hand side of \eqref{eq:dd2} becomes
\begin{align*}
 \text{LHS of } \eqref{eq:dd2}
&= 
 \sum_{A_1,A_2,B_1,B_2}
   v^{\<A_1,A_2\>+\<B_2,B_1\>} 
  g_{A_1,A_2}^A 
  g_{B_2,B_1}^B
a_{A_2} \delta_{A_2,B_2} \cdot 
   E_{A_1} \ostar K_{\cl{A_2} } \ostar F_{B_1} 
\\
&= 
\sum_{A_1,A_2,B_1}
   v^{\<A_1,A_2\>+\<A_2,B_1\>} 
  g_{A_1,A_2}^A 
  g_{A_2,B_1}^B
  a_{A_2}  \cdot 
   E_{A_1} \ostar K_{\cl{A_2}} \ostar F_{B_1}\\
&= 
\sum_{A_1,A_2,B_1}
   v^{\<\cl{A_2},\cl{B_1}\> -  \<\cl{A_2},\cl{A_1}\>} 
  g_{A_1,A_2}^A 
  g_{A_2,B_1}^B
  a_{A_2}  \cdot 
   K_{\cl{A_2}} \ostar E_{A_1} \ostar F_{B_1}.
\end{align*}
Thus by Lemma \ref{EF} (i) we have
\begin{align*}
 \text{LHS of } \eqref{eq:dd2}
&= 
\sum_{A_1,A_2, A_3, B_1, B_2,B_3}
v^{\< \cl{B_2} - \cl{A_2} , \, \cl{A} - \cl{B}\> } 
  g^{A_1}_{B_2,A_3} 
  g_{A_1,A_2}^A 
    g_{A_2,B_1}^B 
  g^{B_1}_{B_3,B_2} 
  \cdot a_{A_2}   a_{B_2}
 \\
& 
\hspace{7cm}
\cdot 
   K_{\cl{A_2}} \ostar K^\dag_{\cl{B_2} } \ostar 
   E_{A_3,B_3} .
\end{align*}


Similar computations using Lemma \ref{EF} (ii) show that 
the right hand side of \eqref{eq:dd2} becomes 
\begin{align*}
 \text{RHS of } \eqref{eq:dd2}
= 
 \sum_{A_1,A_2,A_3,B_1,B_2,B_3} &
 v^{\<\cl{A_1} - \cl{B_1},\, \cl{B} - \cl{A}\> }  
  g^{A_2}_{A_3,A_1}  g^{B_2}_{A_1,B_3}  g_{B_2,B_1}^B g_{B_1,A_2}^A
\cdot a_{B_1} a_{A_1}  \\
 &\qquad \quad \cdot
 K_{\cl{A_1}} 
\ostar K_{\cl{B_1}}^\dag \ostar E_{A_3,B_3} .
\end{align*}
It follows by associativity \eqref{eq:assoc} that this can be rewritten as 
\begin{align*}
 \text{RHS of } \eqref{eq:dd2}
=  \sum_{A_1,A_3,B_1,B_3} &
 v^{\<\cl{A_1} - \cl{B_1},\, \cl{B} - \cl{A}\> }  
 \sum_{A_2',B_2'}
  g^{A_2'}_{B_1,A_3}  g^{A}_{A_2',A_1} g^{B}_{A_1,B_2'} g^{B_2'}_{B_3,B_1} 
\cdot a_{B_1} a_{A_1}  \\
 &\qquad \quad \cdot
  K_{\cl{A_1} }  \ostar K_{\cl{B_1}}^\dag
   \ostar E_{A_3,B_3} .
\end{align*}
One may check that this expression agrees with the LHS of \eqref{eq:dd2}, 
which completes the proof.
\end{proof}

It is now possible to verify that the multiplication in $\DH(\R)$ is associative. 

\begin{cor} \label{cor-associative}
If the category $\R$ is Hom-finite (so that $\A=\R$) or if the subcategory $\A\subset \R$ 
satisfies conditions \eqref{sigma1} and \eqref{sigma2}, 
then the algebra $\DH(\R)$ is associative. 
\end{cor}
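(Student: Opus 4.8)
The plan is to deduce associativity of $\DH(\R)$ from the identification of $\DH(\R)$ with a Drinfeld double, using the fact (noted already in the excerpt, via the argument of \cite[Lemma 3.2.2]{Jo}) that the Drinfeld double multiplication $\circ$ is always associative. First I would dispose of the Hom-finite case: if $\A=\R$, then $\A$ is Krull--Schmidt, every object has only finitely many subobjects, and in particular conditions \eqref{sub}, \eqref{sigma1}, and \eqref{sigma2} are automatically satisfied (anti-equivalences between $A$ and $B$ are controlled by the finitely many filtrations of $A$ and $B$). So this case is subsumed by the second hypothesis, and it suffices to treat the situation where $\A\subset\R$ satisfies \eqref{sigma1} and \eqref{sigma2}.

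Next, by Theorem \ref{thm:Drinf}, under conditions \eqref{sigma1} and \eqref{sigma2} there is an isomorphism of vector spaces $\DH(\R)\cong \widetilde{\H_v}(\R)\otimes_\CC\widetilde{\H_v}(\R)$ — namely the map $\nabla$ of Lemma \ref{ident} — under which the binary operation $\ostar$ on $\DH(\R)$ corresponds to the Drinfeld double product $\circ$. More precisely, the content of Theorem \ref{thm:Drinf} is exactly that $\ostar$ satisfies the defining relations (D1)--(D3) of the Drinfeld double (equivalently \eqref{eq:Drinf}) once we identify the two tensor factors with $\DH^+(\R)$ and $\DH^-(\R)$ via the algebra embeddings $I_+$ and $I_-$ of Proposition \ref{embed} and its $\dag$-analogue. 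The operations $\ostar$ restricted to each of $\DH^\pm(\R)$ are already known to be associative (Proposition \ref{embed}, since $\widetilde{\H_v}(\R)$ is an associative algebra), and \eqref{eq:Drinf} is the compatibility condition that, by the standard Drinfeld-double argument, forces the unique extension of these two multiplications to all of $\DH(\R)$ to be associative.

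So the proof is short: invoke Theorem \ref{thm:Drinf} to get that $(\DH(\R),\ostar)$ \emph{is} the Drinfeld double of $\widetilde{\H_v}(\R)$ as an algebra, and then quote the associativity of the Drinfeld double product (the argument parallel to \cite[Lemma 3.2.2]{Jo} already cited at the end of Section \ref{subsec-Dri}, or more precisely at the end of the discussion preceding Section \ref{sec:cplx}). The one point requiring a sentence of care is that the ``binary operation'' $\ostar$ on $\DH(\R)$ was defined in Definition \ref{def:Hall} and the localization construction \emph{before} associativity was known, so I should note explicitly that the operation appearing in Theorem \ref{thm:Drinf} is the same operation $\ostar$ and that the Drinfeld double relations (D1)--(D3) pin it down uniquely given the associative pieces $\DH^\pm(\R)$; hence $\ostar$ must coincide with the (associative) double product.

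The main obstacle is essentially bookkeeping rather than mathematics: making sure that the operation whose associativity we want is literally the one identified with the double product in Theorem \ref{thm:Drinf}, and that the Hom-finite case genuinely falls under the hypotheses \eqref{sigma1}--\eqref{sigma2} so that no separate argument is needed. Both are routine to check given the results already established. A clean write-up is:

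\begin{proof}
We first observe that if $\R$ is Hom-finite, so that $\A=\R$, then $\A$ is a Krull--Schmidt category in which every object has only finitely many subobjects; consequently conditions \eqref{sub}, \eqref{sigma1} and \eqref{sigma2} all hold automatically. Thus in either case the hypotheses of Theorem \ref{thm:Drinf} are satisfied, and it suffices to treat the case in which $\A\subset\R$ satisfies \eqref{sigma1} and \eqref{sigma2}.

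By Theorem \ref{thm:Drinf}, the algebra $\left(\DH(\R),\ostar\right)$ is isomorphic to the Drinfeld double of the bialgebra $\widetilde{\H_v}(\R)$; concretely, under the isomorphism $\nabla\colon\widetilde{\H_v}(\R)\otimes_\CC\widetilde{\H_v}(\R)\to\DH(\R)$ of Lemma \ref{ident}, the operation $\ostar$ corresponds to the Drinfeld double multiplication $\circ$, which is uniquely determined by the conditions (D1)--(D3) (equivalently \eqref{eq:Drinf}) and is associative by the argument of \cite[Lemma 3.2.2]{Jo}. Transporting this associativity back along $\nabla$ shows that $\ostar$ is associative on $\DH(\R)$.
\end{proof}
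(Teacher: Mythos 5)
Your treatment of the second case (where $\A\subset\R$ satisfies \eqref{sigma1} and \eqref{sigma2}) is exactly the paper's argument: invoke Theorem \ref{thm:Drinf} and the associativity of the Drinfeld double product. The problem is your handling of the Hom-finite case. You claim it is subsumed by the second case because, when $\A=\R$ is Hom-finite, ``every object has only finitely many subobjects'' and hence \eqref{sub}, \eqref{sigma1}, \eqref{sigma2} hold automatically. This is not justified. Hom-finiteness means $\Hom(B,A)$ is finite for each fixed $B$, but the number of subobjects of $A$ is governed by the number of isomorphism classes of objects admitting a monomorphism into $A$, which need not be finite in an abstract essentially small Hom-finite category (compare coherent sheaves on $\mathbb{P}^1$ over $\F$, where $\mathcal{O}(-n)\hookrightarrow\mathcal{O}$ for all $n$). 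The paper itself treats \eqref{sub} as a genuine extra hypothesis --- it is stated as a condition that ``is satisfied if $\R$ is the category of quiver representations considered in Section \ref{sec-q-group}'', not as a consequence of the standing axioms --- and the very phrasing of the corollary, with the Hom-finite case listed as a separate alternative, signals that the authors do not regard it as a special case of \eqref{sigma1}--\eqref{sigma2}. Without finiteness of the set of filtrations, your control of anti-equivalences collapses, so the reduction fails.

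The paper's proof of the Hom-finite case goes by a different and more elementary route, which you should use instead: by Remark \ref{Bridg}, when $\A=\R$ the normalizing factor $\h(M_\blob,N_\blob)$ equals $|\Hom_{\C(\R)}(M_\blob,N_\blob)|$, so $\H(\C_\fin(\P))$ coincides with Bridgeland's twisted Hall algebra of the Hom-finite finitary category $\C(\A)$, whose product is the classical Hall product and is therefore associative by \eqref{eq:assoc}; localizing at the central-up-to-scalar multiplicative set $\Z$ preserves associativity, so $\DH(\R)\cong\DH(\A)$ is associative. This argument requires none of \eqref{sub}, \eqref{sigma1}, \eqref{sigma2}. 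A secondary, more minor point: your remark that (D1)--(D3) ``pin down'' the product uniquely given the two associative pieces itself implicitly uses associativity of $\circ$ to reduce a general product to the generating cases, so it should not be presented as part of the identification of $\ostar$ with $\circ$; the identification is the content of Theorem \ref{thm:Drinf} (verified on the basis of Corollary \ref{cor:sec1:basis}), and associativity is then quoted from the Drinfeld double construction.
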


\begin{proof}
If $\R=\A$, then it follows by Remark \ref{Bridg} that the algebra $\DH(\R)$ is isomorphic 
to the localized Hall algebra $\DH(\A)$ defined in \cite{Br}.  
It follows by results in \cite{Br} that the category $\C(\A)$ is Hom-finite, 
so that $\H(\C(\P))$ and $\DH(\A)$ are both associative in this case. 

The remaining statement is a consequence of Theorem \ref{thm:Drinf}
since the multiplication in the Drinfeld double is associative. 
\end{proof}

\begin{remark}
From the above result we can only conclude that the algebra $\H(\C_\fin(\P))$ 
is ``locally associative" in general, in the sense that   
given $x,y,z \in \H(\C_\fin(\P))$ we have 
\[u\ostar 
(x \ostar (y \ostar z) - 
(x \ostar y) \ostar z) = 0 \]
for some element $u\in \Z$. 
\end{remark}

\subsection{Reduction} 
Define the reduced localized Hall algebra by setting $\gen{M_\blob}=1$ in 
 $\DH(\R)$  whenever $M_\blob$ is an acyclic complex, invariant under the shift functor.  
More formally, we set
\[\DH_\rd(\R)=\DH(\R)/\big(\gen{M_\blob}-1: H_\blob(M_\blob)=0, \  M_\blob\isom M_\blob^\dag\big).\]  
By Lemma \ref{acyclic} this is the same as setting
\begin{equation*}
\label{hot}\gen{K_{P}} \ostar \gen{K_P^\dag} = 1\end{equation*}
for all $P\in \P$.  
One can check that the shift functor $\dag$ defines involutions 
of $\DH_\rd(\R)$.

We have the following triangular decomposition.
\begin{prop}
\label{tri2}
The multiplication map $\gen{A}\tensor \alpha\tensor \gen{B}\mapsto E_A \ostar K_\alpha \ostar F_B$ defines
 an isomorphism of vector spaces
\[\H(\A)\tensor_{\CC} \CC[K(\R)]\tensor_{\CC} \H(\A) \lra \DH_{\rd}(\R).\]
\end{prop}

\begin{proof}
The same argument given for Lemma \ref{ident} also applies here.
\end{proof}

\subsection{Commutation relations} \label{subsec-comm-rels}

In this subsection, we prove commutation relations among generators that are important to understand $\DH_{\rd}(\R)$.

\begin{lem}
\label{commute2}
Suppose $A_1,A_2\in \A$ satisfy
\begin{equation*}
\Hom_\R(A_1,A_2)=0=\Hom_\R(A_2,A_1).
\end{equation*}
Then
$[E_{A_1} ,F_{A_2}]=0$.
\end{lem}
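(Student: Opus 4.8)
The strategy is to specialize the multiplication formulas of Lemma~\ref{EF} to the case $A=A_1$, $B=A_2$ under the vanishing hypothesis $\Hom_\R(A_1,A_2)=0=\Hom_\R(A_2,A_1)$, and show that both products $E_{A_1}\ostar F_{A_2}$ and $F_{A_2}\ostar E_{A_1}$ collapse to the single term $v^{0}\,E_{A_1,A_2}=E_{A_1,A_2}$, whence $[E_{A_1},F_{A_2}]=0$.

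First I would examine Lemma~\ref{EF}(i) with $A=A_1$, $B=A_2$. The sum runs over triples $A_1',B_1,B_2$ with coefficients $g^{A_2}_{B_1,B_2}\,g^{A_1}_{B_2,A_1'}\,a_{B_2}$ times a power of $v$ and the element $K^\dag_{\cl{A_2}-\cl{B_1}}\ostar E_{A_1',B_1}$. The key observation is that $g^{A_1}_{B_2,A_1'}\neq 0$ forces an exact sequence $0\to A_1'\to A_1\to B_2\to 0$, so $B_2$ is a quotient of $A_1$; similarly $g^{A_2}_{B_1,B_2}\neq 0$ forces $B_2$ to be a subobject of $A_2$. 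Hence any nonzero term requires a nonzero map $A_1\twoheadrightarrow B_2\hookrightarrow A_2$, i.e.\ a nonzero element of $\Hom_\R(A_1,A_2)$, unless $B_2=0$. Since $\Hom_\R(A_1,A_2)=0$, the only surviving terms have $B_2=0$, which forces $A_1'\cong A_1$ and $B_1\cong A_2$, with $g^{A_2}_{A_2,0}=g^{A_1}_{0,A_1}=1$ and $a_{B_2}=a_0=1$. The exponent of $v$ is $\<\cl{A_2}-\cl{A_2},\cl{A_1}-\cl{A_2}\>=0$ and the $K^\dag$-factor is $K^\dag_{0}=1$, so $E_{A_1}\ostar F_{A_2}=E_{A_1,A_2}$.

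Next I would run the symmetric argument on Lemma~\ref{EF}(ii) with $A=A_1$, $B=A_2$: the sum over $A_1',A_2',B_1$ has coefficient $g^{A_1}_{A_1',A_2'}\,g^{A_2}_{A_2',B_1}\,a_{A_2'}$, and $g^{A_1}_{A_1',A_2'}\neq 0$ makes $A_2'$ a quotient of $A_1$ while $g^{A_2}_{A_2',B_1}\neq 0$ makes $A_2'$ a subobject of $A_2$; again a nonzero term needs a nonzero map $A_1\to A_2$ unless $A_2'=0$, which is excluded by hypothesis. So only $A_2'=0$ survives, forcing $A_1'\cong A_1$, $B_1\cong A_2$, all coefficients equal to $1$, the $v$-exponent $\<\cl{A_1}-\cl{A_1},\cl{A_2}-\cl{A_1}\>=0$, and the $K$-factor $K_0=1$; thus $F_{A_2}\ostar E_{A_1}=E_{A_1,A_2}$ as well. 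Comparing the two gives $[E_{A_1},F_{A_2}]=0$.

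\textbf{Main obstacle.} The only genuinely delicate point is justifying rigorously that $g^{A_1}_{B_2,A_1'}\,g^{A_2}_{B_1,B_2}\neq 0$ together with $B_2\neq 0$ produces a \emph{nonzero} morphism $A_1\to A_2$: one must check that the composite $A_1\twoheadrightarrow B_2\hookrightarrow A_2$ is nonzero, which holds precisely because $B_2\neq 0$ and the two maps are, respectively, an epimorphism and a monomorphism in the abelian category $\A$. Everything else is bookkeeping with the definition \eqref{eqn-gABC} of $g^C_{A,B}$, the fact that $g^{A}_{0,A}=g^{A}_{A,0}=1$ and $g^{A}_{X,Y}=0$ unless $\cl{X}+\cl{Y}=\cl{A}$, and the normalizations $a_0=1$, $K_0=K^\dag_0=1$. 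I do not expect to need conditions \eqref{sigma1}--\eqref{sigma2} here, since the identity is purely a statement inside $\DH(\R)$ about finitely many surviving terms.
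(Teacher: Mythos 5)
Your proposal is correct and follows essentially the same route as the paper: the paper also specializes the $E\ostar F$ multiplication formula (using the intermediate morphism-counting form of Lemma~\ref{EF}, where the surviving term corresponds to the zero morphism $h\in\Hom_\R(A_1,A_2)$), concludes $E_{A_1}\ostar F_{A_2}=E_{A_1,A_2}$, and obtains $F_{A_2}\ostar E_{A_1}=E_{A_1,A_2}$ by swapping $A_1,A_2$ and applying $\dag$ rather than by rerunning the argument on Lemma~\ref{EF}(ii). One small slip in your second half: with the conventions of \eqref{eqn-gABC}, nonvanishing of $g^{A_1}_{A_1',A_2'}$ makes $A_2'$ a \emph{subobject} of $A_1$ and nonvanishing of $g^{A_2}_{A_2',B_1}$ makes $A_2'$ a \emph{quotient} of $A_2$, so the nonzero composite produced when $A_2'\neq 0$ lies in $\Hom_\R(A_2,A_1)$, not $\Hom_\R(A_1,A_2)$; since both Hom spaces vanish by hypothesis, the conclusion is unaffected, but this is exactly why the lemma needs both vanishing conditions rather than just one.
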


\begin{proof}
It follows from \eqref{ree} and \eqref{ree1} 
that $E_{A_1} \ostar F_{A_2} = E_{A_1,A_2}$.  
Exchanging $A_1$ and $A_2$ in this equation and taking $\dag$ on both sides  
gives 
$F_{A_2} \ostar E_{A_1} = E_{A_1,A_2}$ as well, and the result follows. 
\end{proof}

\begin{lem}
\label{EF-rel}
Suppose $A\in\A$ satisfies $\operatorname{End}_\A(A)=\k$.
Then
\[[E_A ,F_A] =  (q-1)\cdot (K_{\cl{A}}^\dag - K_{\cl{A}}).\]
\end{lem}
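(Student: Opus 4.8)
The plan is to specialize the multiplication formulas of Lemma~\ref{EF} to the case $A_1 = A_2 = A$ with $\End_\A(A) = \k$, and carefully enumerate the objects that can occur in each sum. First I would apply Lemma~\ref{EF}~(i) with $B = A$: the sum runs over triples $A_1, B_1, B_2 \in \Iso(\A)$ with $g^A_{B_1,B_2}\, g^A_{B_2,A_1} \neq 0$. The key observation is that $\End_\A(A) = \k$ forces $A$ to be indecomposable with no proper self-extensions that split off, so a subobject $B_2 \subset A$ with $A/B_2 \cong A$ must be either $0$ or $A$. Hence $g^A_{B_2,A_1}\, g^A_{B_1,B_2} \neq 0$ only for $(B_2, A_1, B_1) = (A, 0, 0)$ or $(B_2, A_1, B_1) = (0, A, A)$. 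In the first case the exponent $\<\cl A - \cl{B_1}, \cl A - \cl A\>$ vanishes, $g^A_{0,A} = g^A_{A,0} = 1$ and $a_{B_2} = a_A$; in the second case $B_1 = A$ so $\cl A - \cl{B_1} = 0$, again the exponent vanishes, $g^A_{A,0} = g^A_{0,A} = 1$ and $a_{B_2} = a_0 = 1$. This gives
\[
E_A \ostar F_A = a_A \cdot K^\dag_{\cl A} \ostar E_{0,0} + K^\dag_0 \ostar E_{A,A} = a_A \, K^\dag_{\cl A} + E_{A,A},
\]
using $E_{0,0} = 1$ and $K^\dag_0 = 1$.

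Next I would run the same analysis on Lemma~\ref{EF}~(ii) with $B = A$: the sum is over $A_1, A_2, B_1$ with $g^A_{A_1,A_2}\, g^A_{A_2,B_1} \neq 0$, and the same rigidity of $A$ forces $(A_2, A_1, B_1) = (A, 0, 0)$ or $(0, A, A)$. In the first case $g^A_{0,A} g^A_{A,0} = 1$, $a_{A_2} = a_A$, $\cl A - \cl{A_1} = \cl A$, and the exponent $\<\cl A - \cl{A_1}, \cl A - \cl A\> = 0$, contributing $a_A\, K_{\cl A} \ostar E_{0,0} = a_A\, K_{\cl A}$. In the second case $A_1 = A$ so $\cl A - \cl{A_1} = 0$, the exponent vanishes, $a_{A_2} = a_0 = 1$, and we get $K_0 \ostar E_{A,A} = E_{A,A}$. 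Hence
\[
F_A \ostar E_A = a_A \, K_{\cl A} + E_{A,A}.
\]
Subtracting the two displays, the $E_{A,A}$ terms cancel and we obtain $[E_A, F_A] = a_A\,(K^\dag_{\cl A} - K_{\cl A})$. Finally, $\End_\A(A) = \k$ together with $A \neq 0$ means $\Aut_\A(A) = \k^\times$, so $a_A = |\k^\times| = q - 1$, yielding $[E_A, F_A] = (q-1)(K^\dag_{\cl A} - K_{\cl A})$ as claimed.

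The main obstacle I anticipate is justifying the claim that $g^A_{B_2,A_1}$ or $g^A_{A_1,A_2}$ can only be nonzero for the two extreme choices of subobject. This is where $\End_\A(A) = \k$ does the real work: a nonzero proper subobject $B_2 \subsetneq A$ with quotient $\cong A$ would, via the composite $A \to A/B_2 \cong A$ (or $A \cong B_2 \hookrightarrow A$), produce a non-scalar endomorphism of $A$ unless that composite is zero or an isomorphism — and dimension count on $\dim_\k\Hom$ rules out the intermediate possibilities since $A$ is indecomposable (being a ring with no idempotents other than $0,1$, as $\End_\A(A) = \k$ is local). I would also need to double-check that $A$ indecomposable plus $\Iso(\A)$ Krull–Schmidt (which holds since $\A$ is abelian Hom-finite) guarantees $g^A_{B_2,A_1}$ counts honestly and that the only subobjects realizing the quotient conditions are $0$ and $A$; this should follow cleanly from the long exact Hom sequence and the locality of $\End_\A(A)$. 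The rest is bookkeeping of the exponents and the identifications $E_{0,0} = 1$, $K_0 = K^\dag_0 = 1$ from the definitions in Section~\ref{ss:local}.
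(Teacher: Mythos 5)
Your proof is correct and follows essentially the same route as the paper: both arguments reduce to the observation that $\End_\A(A)=\k$ leaves only the two extreme terms (the zero map and the $q-1$ automorphisms) in the product expansion, the paper working directly from \eqref{ree}--\eqref{ree1} and obtaining $F_A\ostar E_A$ by applying the involution $\dag$ and the $\dag$-invariance of $E_{A,A}$, while you recompute it from Lemma \ref{EF}(ii). The dichotomy you worry about in your last paragraph is handled exactly as you suggest: $B_2$ being simultaneously a quotient and a subobject of $A$ yields an endomorphism $A\twoheadrightarrow B_2\hookrightarrow A$ that is either zero (forcing $B_2=0$) or invertible (forcing $B_2\cong A$), so no further dimension count is needed.
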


\begin{proof}
Using formulas \eqref{ree} and \eqref{ree1} again, we have 
$E_A\ostar F_A = E_{A,A} + (q-1)\cdot K_A^\dag$. 
Taking $\dag$ on both sides gives the equation 
$F_A\ostar E_A = E_{A,A} + (q-1)\cdot K_A$, 
since $E_{A,A}$ is $\dag$-invariant. The result follows by subtracting these equations. 
\end{proof}

\section{Realization of quantum groups} \label{sec-q-group}

\subsection{Quivers}

Let $\mathcal Q$ be a locally
finite quiver with vertex set $I$ and (oriented) edge set $\Omega$. For
$\sigma \in \Omega$
we denote by $h(\sigma)$ and $t(\sigma)$ the head and tail,  respectively,
and sometimes use the notation $t(\sigma) \stackrel{\sigma}{\to}
h(\sigma)$.  We will denote by
$c_i$ the number of loops at $i \in I$ (i.e., the number of edges $\sigma$
with  $h(\sigma)=t(\sigma)=\,i$).
A (finite) 
{\em path} in $\mathcal Q$ is a sequence 
$ \sigma_m \cdots \sigma_1\,$ 
of edges which satisfies $h(\sigma_{i}) = t(\sigma_{i+1})$ for $1\leq i < m$. 
For each $i\in I$, we let $e_i$ denote the trivial path. 
We again let $h(x)$ and $t(x)$ denote the head and tail vertices of a path $x$.

Consider the sub-quiver $\bar{\mathcal Q }\subset \mathcal Q$ with vertex set $\bar{I}=I$ and edge set 
$\bar{\Omega} = \Omega \backslash \{ \sigma | h(\sigma)=t(\sigma) \}$. 
We make the following assumptions throughout: 
\begin{itemize}
\item[(A)] There are no infinite paths of the form $i_0 \rightarrow i_1 \rightarrow i_2 \rightarrow \cdots$ in $\bar{\mathcal Q}$. In particular, $\bar{\mathcal Q}$ is acyclic and $\mathcal Q$ has no oriented cycles other than loops.

\item[(B)] Each vertex of the quiver  $\mathcal Q$ has either zero loops or more than one loop, i.e.  
$c_i\neq 1$ for all $i\in I$. 
\end{itemize}
It follows from (A) that  $I$ is partially ordered, with $i\preceq j$ if there exists  a path $x$ 
such that $t(x)=j$ and $h(x)=i$, 
and the set $(I, \preceq)$ satisfies the descending chain condition. 

From now on,  we assume that the quiver $\mathcal Q$  satisfies the conditions (A) and (B).

\begin{example}
Write $\L_n$ to denote the quiver consisting of a single vertex $I=\{1\}$ and $n$ loops. If $n\ge 2$, then the quiver $\mathcal Q= \L_n$ trivially satisfies the assumptions (A) and (B).
Below is a diagram for $\L_4$.
\[\L_4 : \qquad
\begin{tikzcd}
\raisebox{.5pt}{\textcircled{\raisebox{-.9pt} {\small 1}}} 
\arrow[out=0,in=50,loop,swap,"\sigma_1"]
  \arrow[out=90,in=140,loop,swap,"\sigma_2"]
  \arrow[out=180,in=230,loop,swap,"\sigma_3"]
  \arrow[out=270,in=320,loop,swap,"\sigma_4"]
\end{tikzcd}\]
\end{example}

Let $R=\k \mathcal Q$ 
denote the path algebra, with basis given by the set of paths 
and multiplication defined via concatenation.  
The elements $e_i$ are pairwise orthogonal idempotents. 
It follows from (A) that the subring $e_i R e_i$ is isomorphic to a free associative 
$\k$-algebra on $c_i$ generators. (If $c_i=0$ then $e_i R e_i \cong \k$.) 
Hence each left ideal $P_{i}:=  Re_i$  is an indecomposable projective $R$-module. 
It can then be checked that 
$R$ splits as a direct sum 
\begin{equation}\label{princ}
R  = \bigoplus_{i\in I} P_i
\end{equation}
of pairwise non-isomorphic projective left $R$-submodules. 

A representation of $\mathcal Q$ over $\k$ is a collection $(V_i, x_{\sigma})_{i \in I, \sigma \in \Omega}$, where $V_i$
is a (possibly infinite dimensional) $\k$-vector space and
$x_{\sigma} \in \mathrm{Hom}_\k (V_{t(\sigma)}, V_{h(\sigma)})$.  
We let $\mathrm{Rep}_{\k}(\mathcal Q)$ denote 
the abelian category consisting of the representations of $\mathcal Q$ over $\k$ which are 
of {\em finite support}, i.e.~ such that $V_i = 0$ for almost all $i$. 
A representation $(V_i, x_{\sigma}) \in \mathrm{Rep}_{\k}(\mathcal Q)$ is called {\em finite-dimensional} if 
each $V_i$ is finite-dimensional. 
For such a representation, set $\mathbf{dim}(V_i,x_{\sigma})=
\mathbf{dim}(V_i)=(\dim_{\k} V_i) \in \N^{\oplus I}$. 
We denote by $\mathrm{rep}_{\k}(\mathcal Q)$ the full subcategory of $\R$ consisting of the finite dimensional representations of $\mathcal Q$.
 
Any representation of $\mathcal Q$ is naturally an $R$-module 
for the path algebra $R$. Whenever it is convenient, particularly in the next subsection, we will consider representations of $\mathcal Q$ as $R$-modules. 
It follows from the decomposition \eqref{princ} that any left $R$-module $M$ has a decomposition into $\k$-subspaces,  
$M= \bigoplus_{i\in I} M(i) $, with $M(i) = e_iM$. 
Then we have $\mathbf{dim}(M)= (\dim_{\k} M(i)) \in \N^{\oplus I}$ for a finite dimensional $R$-module $M$, which is equal to the dimension vector as a representation.  We say that an $R$-module is of finite support if the associated representation is. 

Recall that any $M \in \mathrm{Rep}_{\k}(\mathcal Q)$ has the standard presentation 
of the form 
\begin{equation} \label{std}
0 \lra \bigoplus_{\sigma \in \Omega} P_{h(\sigma)} \otimes_\k e_{t(\sigma)} M 
\xrightarrow{ \ f\  } 
\bigoplus_{i\in I} P_i \otimes_\k e_i  M 
\xrightarrow{ \ g\ } M \lra 0. 
\end{equation}
Let $\mathrm{Proj}_\k(\Q)$ denote the full subcategory of $\mathrm{Rep}_{\k}(\mathcal Q)$ whose objects are 
finitely-generated, projective $R$-modules. 
Let $\R$ be the full subcategory of $\mathrm{Rep}_{\k}(\mathcal Q)$ whose objects are finitely presented representations of $\Q$, i.e. the full subcategory 
consisting of all 
objects $M$ for which there exists a presentation 
\[ P \to Q \to M \to 0 \]
for some $P,Q \in \text{Proj}_\k(\Q)$.

As in Section \ref{ass}, define $\P\subset \R$ to be the 
full subcategory of projectives in $\R$ and 
$\A \subset \R$ the full subcategory  
of objects $A\in \R$ such that $\Hom_\R(M,A)$ is a finite set for any $M\in \R$. It is easy to see that $\P = \mathrm{Proj}_\k(\Q)$. For the category $\A$, we have the following characterization.

\begin{lem} \label{lem-finite-sub}
The category $\A$ is equal to the full subcategory of $\R$ consisting of all 
finite-dimensional representations, i.e. $\A = \mathrm{rep}_\k(\mathcal Q)$. 
\end{lem}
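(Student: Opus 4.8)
The goal is to prove the equality $\A = \mathrm{rep}_\k(\mathcal Q)$ of full subcategories of $\R$, where $\A$ consists of those $A\in\R$ with $\Hom_\R(M,A)$ finite for all $M\in\R$. The plan is to prove the two inclusions separately, and the crucial technical input is the standard presentation \eqref{std} together with the observation that $\Hom_R(P_i, A) \cong e_i A$ for the indecomposable projectives $P_i = Re_i$.

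\textbf{The inclusion $\mathrm{rep}_\k(\mathcal Q) \subseteq \A$.} First I would take a finite-dimensional representation $A$ and an arbitrary $M\in\R$, and show $\Hom_\R(M,A)$ is finite. Since $M$ is finitely presented, choose a presentation $P\xrightarrow{u} Q \to M \to 0$ with $P,Q\in\P = \mathrm{Proj}_\k(\Q)$; applying $\Hom_\R(-,A)$ gives an exact sequence $0\to\Hom_\R(M,A)\to\Hom_\R(Q,A)\to\Hom_\R(P,A)$, so it suffices to see $\Hom_\R(Q,A)$ is finite for any finitely generated projective $Q$. Such a $Q$ is a finite direct sum of the $P_i = Re_i$, and $\Hom_R(Re_i, A)\cong e_iA$, which is a $\k$-subspace of the finite-dimensional space $A$; hence $\Hom_\R(Q,A)$ is finite, and so is $\Hom_\R(M,A)$. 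This shows $A\in\A$.

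\textbf{The inclusion $\A \subseteq \mathrm{rep}_\k(\mathcal Q)$.} For the converse, let $A\in\A$; I must show $A$ is finite-dimensional, i.e.\ each $e_iA$ is a finite-dimensional $\k$-vector space (finiteness of support is automatic since $A\in\R\subseteq\mathrm{Rep}_\k(\Q)$). The key point is that for each $i\in I$ one has $P_i = Re_i\in\P\subseteq\R$, so $\Hom_\R(P_i, A)$ is a \emph{finite set} by the defining property of $\A$; but $\Hom_R(Re_i, A)\cong e_iA$ as $\k$-vector spaces, and a finite $\k$-vector space over $\k=\F$ (with $q\ge 2$ elements) is necessarily finite-dimensional. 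Doing this for every $i$ in the (finite) support of $A$ shows $\mathbf{dim}\,A\in\N^{\oplus I}$, i.e.\ $A\in\mathrm{rep}_\k(\mathcal Q)$.

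\textbf{Main obstacle.} The argument is essentially bookkeeping once one has the identification $\Hom_R(Re_i,A)\cong e_iA$ and the finiteness of $\P$-objects being finitely generated (a finite sum of $P_i$'s). The only subtlety to be careful about is in the first inclusion: one needs that every object of $\P$, the category of projectives \emph{in} $\R$, is finitely generated as an $R$-module — this is exactly the identification $\P = \mathrm{Proj}_\k(\Q)$ asserted just before the lemma, which in turn rests on assumption (A) guaranteeing that projectives in the finitely-presented category are direct summands of finite sums of the $P_i$. I would cite that identification rather than reprove it. No step requires a genuinely hard idea; the heart of the matter is simply that "finite as a set" and "finite-dimensional over a finite field" coincide, applied to the spaces $e_iA = \Hom_R(P_i,A)$.
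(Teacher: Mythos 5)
Your proof is correct and follows essentially the same route as the paper: both directions hinge on the identification $\Hom_R(Re_i,A)\cong e_iA$ together with the equivalence of ``finite as a set'' and ``finite-dimensional'' over $\k=\F$. The only cosmetic differences are that the paper handles the first inclusion by observing directly that a homomorphism from a finitely generated module to a finite set is determined by finitely many values (rather than via left-exactness of $\Hom$ applied to a presentation), and it records explicitly---via the standard presentation \eqref{std}---that a finite-dimensional representation lies in $\R$, a point you take for granted.
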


\begin{proof}
Assume that $A \in \mathrm{rep}_\k(\mathcal Q)$. From the standard resolution \eqref{std}, we see that $A \in \R$. Clearly, $A$ is finite as a set. Since any $M \in \R$ is finitely generated, the set $\Hom_\R(M,A)$ is also finite. Thus $A$ is an object of $\A$. For the converse, assume that $M \in \mathrm{Rep}_\k(\mathcal Q)$ is infinite dimensional. Then there is a vertex $i \in I$ such that $e_i M$ is infinite dimensional. For each $a \in e_i M$, we have a homomorphism $P_i \rightarrow M$ given by $e_i \mapsto a$. Thus  $\Hom_\R(P_i,M)$ is an infinite set and $M$ does not belong to $\A$. 
\end{proof}

\subsection{Krull--Schmidt property for $\mathrm{Proj}_\k(\Q)$} 

Since the endomorphism ring  $\End_R( P_i)$ is not local in general, 
the usual Krull--Schmidt theorem 
does not hold in the category   
$\mathrm{Proj}_\k(\Q)$. 
In this subsection, 
we describe a suitable analogue. 

First note the following. 

\begin{lem}\label{prec}
Suppose $i,j\in I$ are distinct vertices such that  
$(R e_i R) \cap (R e_j R)  \neq 0$. 
Then either $i\prec j$ or $j\prec i$. 
\end{lem}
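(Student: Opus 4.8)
The statement asserts that if the two-sided ideals $Re_iR$ and $Re_jR$ have nonzero intersection, then the vertices $i$ and $j$ are comparable in the partial order $\preceq$. The natural approach is to unwind the meaning of $Re_iR$ in terms of paths. Since $R=\k\mathcal Q$ has a $\k$-basis of paths, the ideal $Re_iR$ is spanned by those paths $x$ that \emph{factor through} the vertex $i$, i.e. paths $x = x'' e_i x'$ where $x'$ ends at $i$ and $x''$ starts at $i$ — equivalently, paths $x$ for which $i$ lies on $x$ (in the sense that $x$ passes through $i$), so in particular $h(x)\preceq i\preceq t(x)$. First I would make this basis description precise, observing that multiplication of paths is concatenation and $e_i$ is the trivial path at $i$, so $y\, e_i\, z$ is nonzero only when $z$ has head $i$ and $y$ has tail $i$, and then it equals the concatenated path through $i$.

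Next, suppose $0\neq w\in (Re_iR)\cap(Re_jR)$. Expanding $w$ in the path basis, at least one basis path $x$ appearing with nonzero coefficient lies in both ideals (here I would use that the path basis is compatible with each of the two ideals, so membership in the intersection forces each basis component to lie in the intersection — or more simply, just pick one path $x$ in the support of $w$ and argue it must factor through $i$ and through $j$, since $Re_iR$ and $Re_jR$ are each spanned by subsets of the path basis). Then $x$ factors through $i$: $h(x)\preceq i\preceq t(x)$; and $x$ factors through $j$: $h(x)\preceq j\preceq t(x)$. Thus both $i$ and $j$ lie on the single path $x$, hence one of them precedes the other along $x$, giving $i\preceq j$ or $j\preceq i$. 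Since $i\neq j$ by hypothesis, in fact $i\prec j$ or $j\prec i$.

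The one point requiring care — and the main obstacle, though it is mild — is justifying that membership in the intersection of the two ideals can be checked on path-basis components, and more importantly that two vertices lying on a common path are automatically comparable. The latter is exactly where assumption (A) on $\mathcal Q$ enters: $\bar{\mathcal Q}$ is acyclic with the descending chain condition, so a genuine (non-loop) path has distinct, linearly ordered vertices, and two of its vertices are comparable via the segment of the path between them. If the common path $x$ is a loop (or a product of loops at a single vertex), then $i=j=$ that vertex, contradicting $i\neq j$; so this degenerate case cannot occur. I would phrase the argument so that one only needs the existence of \emph{some} path from $t(x)$ down to $h(x)$ passing through both $i$ and $j$, and then read off comparability directly from the definition $i\preceq j \iff \exists$ path from $j$ to $i$.
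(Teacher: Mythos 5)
Your proposal is correct and follows essentially the same route as the paper: both reduce to the observation that $Re_iR$ (resp.\ $Re_jR$) is spanned by the basis paths factoring through $i$ (resp.\ $j$), extract from a nonzero element of the intersection a single path passing through both vertices, and read off comparability from the subpath between the two factorization points. The paper states the reduction to a common path more tersely ("it follows from the stated condition that there are paths $x,x',y,y'$ with $xe_iy=x'e_jy'$ nonzero and equal"), while you spell out the basis-compatibility justification and the degenerate loop case explicitly; the substance is identical.
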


\begin{proof}
It follows from the stated condition
that there are paths $x,x',y,y'$ such that 
$x e_i y$ and $x' e_j y'$ are both nonzero and 
\[
xy = x e_i y  =  x' e_j y' =  x'y'.
\]
We then have 
$xy =  x'y' = 
\sigma_1 \cdots \sigma_n$, for some $\sigma_1, \dots, \sigma_n \in \Omega$. 
Let $1\leq l, l' \leq n$ be such that 
$x = \sigma_1 \cdots \sigma_l$, $y= \sigma_{l+1} \cdots \sigma_n$, $x' = \sigma_1 \cdots \sigma_{l'}$, 
and $y'= \sigma_{l'+1} \cdots \sigma_n$. 
Suppose without loss of generality that $l<l'$.  Then $z=\sigma_{l+1}\cdots \sigma_{l'}$ is a path such that 
$h(z)= i$ and $t(z) = j$.  
Thus $i \prec j$.  
\end{proof}

Let $R$-Mod denote the abelian category of all left $R$-modules of finite support. We identify $R$-Mod with $\mathrm{Rep}_\k(\Q)$, and consider $\P=\mathrm{Proj}_\k(\Q)$ as a full subcategory of $R$-Mod.
We write
$\mathrm{supp}(M):= \{i \mid M(i)\neq 0\}$ for $M \in R$-Mod. 
Recall that for any idempotent $e\in R$ there is an exact functor 
from the category $R$-Mod to $(eRe)$-Mod given by $M\mapsto eM$.   
Now suppose $M \in \P$. 
Then  $M(i)$ is a finitely-generated, projective 
$(e_i R e_i)$-module. 
Since $e_i R e_i$ is a free associative $\k$-algebra, 
$M(i)$ is a free $(e_i R e_i)$-module of finite rank, 
say $r_i(M)$. (See, for example, \cite{Cohn}.)
Write $\mathbf{rk}_\P(M) = (r_i(M))_{i\in I}\in \mathbf{N}^{\oplus I}$ to denote the vector formed by these ranks. 
Notice that $r_i(P_i) = 1 $ and $r_j(P_i) = 0$ unless $j\preceq i$. 
It follows that the vectors, $\mathbf{rk}_\P(P_i)$, form a basis 
for $\ZZ^{\oplus I}$.

Given a subset $J\subseteq I$, consider the set $J_{\preceq} = \{ i \in I  \mid i\preceq j \text{ for some } j\in J\}$.  
If $J$ is finite then so is $J_{\preceq}$ by (A), and there are corresponding idempotents 
\[
e_J = \sum_{j\in J} e_j \qquad \text{ and } \qquad
e_{\preceq J} =  \sum_{ i \in J_{\preceq} } e_i.
\]
%

\begin{lem}\label{equiv}
Suppose $J\subseteq I$ is a finite subset 
and write $e=e_{\preceq J}$.
Then there is an equivalence between $(eRe)\text{\rm -Mod}$ and the full subcategory of $R\text{\rm -Mod}$ 
consisting of modules $M$ such that 
$\mathrm{supp}(M)\subseteq J$.  
\end{lem}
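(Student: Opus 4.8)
The plan is to produce an explicit adjoint pair realizing the equivalence. Since $e = e_{\preceq J}$ is an idempotent of $R$, there is an exact restriction functor $F\colon R\text{-Mod} \to (eRe)\text{-Mod}$, $M \mapsto eM$, as already recalled above. In the other direction, I would use the functor $G\colon (eRe)\text{-Mod} \to R\text{-Mod}$, $N \mapsto Re \otimes_{eRe} N$ (note $Re$ is an $(R,eRe)$-bimodule). The first step is to observe that $G$ lands in the stated subcategory: because $Re = \bigoplus_{i\in J_{\preceq}} P_i$ as a left $R$-module by \eqref{princ}, we have $\mathrm{supp}(Re\otimes_{eRe} N) \subseteq J_{\preceq}$; but one must check the support is actually contained in $J$, which follows since $e_i(Re)e = e_i R e$, and for $i \in J_\preceq \setminus J$ the condition (A) together with the definition of $J_\preceq$ forces any such contribution to vanish after tensoring — more precisely, the image of $G$ lies in modules supported on $J$ because $e(Re\otimes_{eRe}N) \cong N$ recovers exactly the ``$J_\preceq$-part,'' and I would argue that a module in the image generated in this way has no component outside $J$ using that every $i \in J_\preceq$ satisfies $i \preceq j$ for some $j \in J$ and tracing the path-algebra action.

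The second step is the standard unit/counit computation. The counit $FG(N) = e(Re\otimes_{eRe}N) = eRe\otimes_{eRe} N \to N$ is the canonical isomorphism, so $FG \cong \id$. For the unit, given $M$ with $\mathrm{supp}(M)\subseteq J$, the natural map $GF(M) = Re\otimes_{eRe} eM \to M$ is an isomorphism: surjectivity holds because $M = RM = R e_{\preceq J} M$ when $\mathrm{supp}(M) \subseteq J \subseteq J_\preceq$ (here one uses that any path reaching a vertex in $J$ factors through $J_\preceq$, a consequence of Lemma \ref{prec} and assumption (A)), and injectivity follows by a rank/dimension count on each graded piece $e_i(-)$, reducing to the fact that $e_i Re \otimes_{eRe} eM \to e_i M$ is bijective. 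Since $R\text{-Mod}$ here means modules of finite support and $J_\preceq$ is finite by (A), all the modules involved are genuinely objects of the relevant categories, so no finiteness subtlety arises.

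I expect the main obstacle to be pinning down precisely why the image of $G$ is supported on $J$ rather than merely on $J_\preceq$ — equivalently, why the two natural functors are mutually inverse on the nose rather than only up to the larger subcategory supported on $J_\preceq$. The resolution is that the correct ambient idempotent for the image is $e = e_{\preceq J}$ but the equivalence is with modules supported in $J$: the point is that for a module $M$ supported on $J$, the ``extra'' vertices in $J_\preceq \setminus J$ contribute only through the action of paths \emph{into} $J$, so $eM$ already carries all the information, and conversely $Re\otimes_{eRe}eM$ cannot acquire new support because $eRe$-module structure on $eM$ already encodes the full $R$-action restricted appropriately. Once this bookkeeping is made precise — essentially a careful application of Lemma \ref{prec} to show paths cannot ``escape'' $J_\preceq$ and that support is forced back down to $J$ — the equivalence follows formally.
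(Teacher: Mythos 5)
Your choice of functors is essentially the paper's: restriction $M\mapsto eM$ in one direction and, in the other, $G(N)=Re\otimes_{eRe}N$, which agrees with the paper's ``extend the action by letting $\bigoplus_{i\notin J_\preceq}P_i$ act by zero'' because $Re=eRe$ (for any path $x$ with $xe\neq 0$ one has $t(x)\in J_\preceq$, hence $h(x)\preceq t(x)$ lies in the downward-closed set $J_\preceq$, so $exe=xe$). Granting that identity, your unit/counit computation is immediate and correct.

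The genuine gap is exactly the step you flag as ``the main obstacle'': the claim that the image of $G$ is supported on $J$ rather than merely on $J_\preceq$. That claim is false, and no bookkeeping will rescue it. Concretely, if $i_0\in J_\preceq\setminus J$, then $N=eRe_{i_0}=Re_{i_0}=P_{i_0}$ is a nonzero $eRe$-module with $e_{i_0}G(N)=e_{i_0}P_{i_0}\neq 0$, so $G(N)$ is not supported on $J$; equivalently, restriction from the modules supported on $J$ is not essentially surjective onto $(eRe)$-Mod (for the quiver $1\to 2$ with $J=\{1\}$ one gets $J_\preceq=\{1,2\}$, $e=1$ and $eRe=R$, while the modules supported on $\{1\}$ form a proper full subcategory). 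The correct target is the subcategory of modules $M$ with $\mathrm{supp}(M)\subseteq J_\preceq$; that is what the paper's one-line proof establishes and what is actually used later (in Lemma \ref{lem:KS}(i) the ideal $\mathfrak a$ is only known to be supported on $J_\preceq$, and one applies the statement with $J$ replaced by $J_\preceq$, which is legitimate since $(J_\preceq)_\preceq=J_\preceq$). A secondary point: your appeals to Lemma \ref{prec} are misplaced; the fact that paths cannot leave $J_\preceq$ is just the definition of the order ($h(x)\preceq t(x)$ for every path $x$) together with downward closure of $J_\preceq$, whereas Lemma \ref{prec} concerns intersections of two-sided ideals and plays no role here. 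With the support condition corrected to $J_\preceq$, your argument closes and coincides with the paper's.
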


\begin{proof}
Consider the functor 
$R\text{-Mod} \to (eRe)\text{-Mod}: M\mapsto eM$.   
Then $eRe = Re$, and an inverse functor is given by extending the action of $Re$ on 
$M \in (eRe)$-Mod to all of $R$ by letting  
$\bigoplus_{i\notin J_{\preceq}} P_i$  act by zero. 
\end{proof}

\begin{lem}\label{lem:KS}
Suppose $\mathfrak a $ is a finitely generated left ideal of $R$. For $i,j\in I$, the following hold.   
\begin{enumerate}[(i)]
\item The ideal $\mathfrak a $ is a projective left $R$-module.  
\\[-.3cm]
\item Any nonzero $R$-module homomorphism, $\phi: P_i \to \mathfrak a$, is injective. 
\\[-.3cm]
\item Suppose 
$\phi_i: P_i \to \mathfrak a$ and 
$\phi_j: P_j \to \mathfrak a$ 
are 
homomorphisms such that 
$\im(\phi_i)\cap \im(\phi_j) \neq 0$.
Then either 
$\im(\phi_i)\subseteq \im(\phi_j)$
or
$\im(\phi_j)\subseteq \im(\phi_i)$.
\\[-.3cm]
\item As a left $R$-module, $\mathfrak a$ is isomorphic to a finite direct sum 
of copies of the modules $\{P_i\}_{i\in I}$. 

\end{enumerate}
\end{lem}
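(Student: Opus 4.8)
The plan is to prove the four statements in order, since each relies on the previous ones, and to work throughout inside the category $R\text{-Mod}$ of modules of finite support, reducing to a finitely generated idempotent-cut subalgebra via Lemma \ref{equiv} whenever a finiteness hypothesis is needed. Let me first reduce: since $\mathfrak a$ is finitely generated, it is generated by finitely many elements, each supported on finitely many vertices, so $\mathrm{supp}(\mathfrak a)$ is contained in a finite set $J_\preceq$ for a finite $J\subseteq I$; setting $e = e_{\preceq J}$ we have $\mathfrak a = e\mathfrak a$ and, by Lemma \ref{equiv}, we may pass to the algebra $eRe = Re$, which is a \emph{finite-dimensional-index} path-algebra-type ring built from the finitely many free algebras $e_iRe_i$. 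In fact the key structural input is that $eRe$ is a \emph{fir} (free ideal ring) — this follows from assumption (A) together with the fact that each $e_iRe_i$ is a free associative algebra, by the standard results on path algebras of quivers with no oriented cycles except loops (cf.\ \cite{Cohn}).

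For (i): a finitely generated left ideal of a fir is free, hence projective; alternatively, one can argue directly that $R$ is left hereditary (every left ideal is projective) using that $R$ has global dimension $\le 1$ — indeed this is exactly condition (b) of Section \ref{ass} applied to $\R$, but here we want it for all of $R\text{-Mod}$, so the fir property (or the explicit resolution \eqref{std}) is the cleaner route. For (ii): if $\phi\colon P_i \to \mathfrak a$ is nonzero, then $\im(\phi)$ is a nonzero cyclic submodule generated by $\phi(e_i) = a$ with $e_i a = a \ne 0$; the point is that $P_i = Re_i$ is \emph{torsion-free} in the appropriate sense over the fir $eRe$, so the surjection $P_i \epito \im(\phi)$ has projective (hence, by (i)-type reasoning, free) kernel which must be a direct summand; but $P_i$ is indecomposable (its endomorphism ring $e_iRe_i$, while not local, has no nontrivial idempotents since a free associative algebra is a domain), so the kernel is $0$. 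For (iii): $\im(\phi_i) = Ra_i$ and $\im(\phi_j) = Ra_j$ are cyclic submodules of $\mathfrak a$ generated by elements $a_i \in e_i\mathfrak a$, $a_j\in e_j\mathfrak a$; in a fir, two principal left ideals with nonzero intersection are comparable when their generators are "idempotent-homogeneous" at distinct vertices — concretely, a nonzero element of $Ra_i \cap Ra_j$ gives relations $r a_i = r' a_j \ne 0$, and tracing paths as in the proof of Lemma \ref{prec} forces $i\preceq j$ or $j\preceq i$, and then the generator at the larger vertex lies in the ideal generated by the one at the smaller. Finally (iv): by (i) $\mathfrak a$ is a finitely generated projective left $eRe$-module; using the rank vector $\mathbf{rk}_\P$ and the fact (noted before the lemma) that the $\mathbf{rk}_\P(P_i)$ form a $\ZZ^{\oplus I}$-basis, together with the comparability from (iii) to organize a generating set of cyclic submodules into a chain-refined direct sum, one peels off summands isomorphic to various $P_i$ by descending induction on the poset $(I,\preceq)$ restricted to $J_\preceq$.

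The main obstacle I expect is part (iv) — more precisely, upgrading "finitely generated projective over the fir $eRe$" to "a direct sum of the \emph{specific} indecomposables $P_i$." A general fir can have finitely generated projectives that are not free, and even a free $eRe$-module must be re-expressed in terms of the $P_i = Re_i$ rather than abstract free generators; the comparability statement (iii) is the tool that makes this possible, by letting one choose a generating family of principal submodules that is totally ordered under inclusion at each vertex and hence splits off one $P_i$ at a time. I would therefore spend the bulk of the argument setting up (iii) carefully and then running the descending-chain induction (available by (A)) to extract the $P_i$ summands; statements (i) and (ii) are comparatively routine once the fir structure of $eRe$ is in hand.
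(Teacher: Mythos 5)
Your overall plan coincides with the paper's proof: reduce to $eRe$ with $e=e_{\preceq J}$ via Lemma \ref{equiv}, get (i) from the fact that a left ideal of a hereditary ring is projective, get (ii) by splitting $0\to\ker\phi\to P_i\to\im(\phi)\to 0$ and using indecomposability of $P_i$, get (iii) by tracing paths as in Lemma \ref{prec}, and get (iv) by descending induction on $(I,\preceq)$, peeling off copies of $P_{j}$ at maximal vertices of the support using the ranks $r_j(\mathfrak a)$ and the comparability from (iii). Two of your supporting claims are wrong, however, and one of them creates a real gap in (ii).

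First, your announced ``key structural input'' --- that $eRe$ is a fir --- is false whenever $J_\preceq$ contains more than one vertex: a fir is an integral domain, whereas $e_ie_j=0$ for $i\neq j$; moreover every finitely generated projective over a fir is free, while part (iv) itself exhibits left ideals of $eRe$ as direct sums of the pairwise non-isomorphic modules $P_i$, which are not free. (Your later aside that a fir can have non-free finitely generated projectives is the opposite error.) What you actually need, and what the paper uses, is only that $eRe$ is hereditary, so your ``alternative'' route to (i) is the correct one and nothing downstream genuinely requires the fir property. Second, and more seriously, in (ii) the inference ``the kernel is projective \dots\ which must be a direct summand'' is invalid: a projective submodule need not be a summand. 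The correct argument (the paper's) is that the \emph{image} $\im(\phi)$ is a cyclic left ideal of $R$ and hence projective by (i), so the surjection $P_i\epito\im(\phi)$ splits; only then does indecomposability of $P_i$ force $\ker(\phi)=0$. (A small further slip: in (iii), with the paper's convention the element at the \emph{smaller} vertex is a path multiple of the one at the larger vertex, not the other way around; the symmetric conclusion is unaffected.) With these repairs your argument is essentially the paper's.
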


\begin{proof}
(i) Suppose $\mathfrak a\subseteq R$ is a left ideal with $S\subseteq \mathfrak a$ a finite set of generators. 
Then $S \subseteq Re_J$ for some finite set $J$ and it 
follows 
that $\mathrm{supp}(\mathfrak a)\subseteq J_{\preceq}$. 
If we set $e=e_{\preceq J}$, 
then $\mathfrak a\subseteq e R e$, which shows that $\mathfrak a$ is a left ideal of a hereditary ring  
and thus projective as an $eRe$-module. 
It follows that $\mathfrak a$ is a projective $R$-module 
by Lemma \ref{equiv}.

(ii)  The image $\im(\phi) \subseteq \mathfrak a$ is a projective left $R$-module by (i). 
So the exact sequence 
\[0\to \ker(\phi) \to P_i \to \im(\phi) \to 0 \]
splits. 
Since 
$P_i$ is indecomposable, it follows that $\phi$ is injective.

(iii) Letting $\phi_i(e_i)=v$ and $\phi_j(e_j)=w$,  
we have $\im(\phi_i) = Rv$ and  $\im(\phi_j) = Rw$. 
%
%
We thus have $(Re_iv)\cap(Re_jw) \neq 0$.  It follows by Lemma \ref{prec} 
that $i\preceq j$ or $j\preceq i$.  Assume without loss of generality that $j\preceq i$.  
It then follows from the proof of Lemma \ref{prec}, that there exists a path $x$ 
such that $w = e_jw = e_j x e_i v = xv$.  It follows that 
$\im(\phi_j) = R w = R (xv) \subseteq R v =\im(\phi_i)$.

(iv)
First set $J^1 := \mathrm{supp}(\mathfrak a)$, 
and choose a maximal vertex $j_1\in J^1$. 
Then $\mathfrak a_{j_1}$ is an $e_{j_1}R{e_{j_1}}$-module with a  
finite set, say $\mathcal{S}^1$, of free generators of size 
$n_1 := r_{j_1}(\mathfrak a) \neq 0$. 
It follows from (ii) that for each generator $v\in \mathcal{S}^1$ the mapping,     
$P_{j_1} \to \mathfrak a: e_{j_1}\mapsto v$,
defines an injective $R$-module homomorphism.  
By (iii), we thus have a corresponding isomorphism 
\[ (P_{j_1})^{\oplus n_1}\, \lRa{\sim} R\mathcal{S}^1\  \subseteq\, \mathfrak a.\]

Next choose a maximal vertex $j_2$ belonging to the subset 
\[ J^2 := \{ j\in J^1\mid r_{j}(\mathfrak a) - n_1\cdot r_{j}(P_{j_1})>0\}.\]
It follows that $\mathfrak a_{j_2} = e_{j_2} \mathfrak a$ is a free $e_{j_2} R e_{j_2}$-module of 
rank 
$n_1\cdot r_{j}(P_{j_1}) + n_2$, 
for some $n_2 >0$. 
Let $\mathcal{S}^2 = \mathcal{S}^2_1 \sqcup \mathcal{S}^2_2$ be a set of free $e_{j_2} R e_{j_2}$-generators 
such that $\mathcal{S}^2_1$ generates $e_{j_2}(R\mathcal{S}^1)$. 
Then $\mathcal{S}^2_2$ has size $n_2$, and it follows as in the previous paragraph that we have an embedding 
\[ (P_{j_2})^{\oplus n_2}\, \lRa{\sim} R\mathcal{S}^2_2\  \subseteq\, \mathfrak a.\]
By the maximality of $j_1$ we have $j_1 \npreceq j_2$.  
It is also clear that $R\mathcal{S}^2_2 \nsubseteq R\mathcal{S}^1$. 
It follows by (iii) that $(R\mathcal{S}^1)\cap (R\mathcal{S}^2_2) =0$. 
We thus obtain an embedding 
\[  (P_{j_1})^{\oplus n_1} \oplus (P_{j_2})^{\oplus n_2}\, \lRa{\sim} R\mathcal{S}^1 + R\mathcal{S}^2_2\  \subseteq\, \mathfrak a.\]
Continuing in this way the process eventually terminates, since $\mathrm{supp}(\mathfrak a)$ is finite, 
yielding the desired decomposition. 
\end{proof}

The following is an analogue of the Krull--Schmidt theorem for the category of finitely-generated projective $R$-modules.

\begin{prop}\label{prop:KS}
Given any finitely-generated, projective left $R$-module 
$M \in\mathrm{Proj}_\k(\Q)$, 
there is an $R$-module isomorphism 
\[M\cong \bigoplus_{i\in I} P_i^{\oplus n_i}\] 
for some nonnegative
integers $n_i$, only finitely many of which are nonzero.  
Moreover, given another such decomposition, $M\cong \bigoplus_{i\in I} P_i^{\oplus m_i}$, 
we must have $m_i = n_i$ for all $i$. 
\end{prop}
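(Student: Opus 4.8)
The plan is to prove uniqueness first, as a short consequence of the rank vectors, and then existence by an induction that peels off one maximal vertex of the support at a time. For uniqueness, note that $\mathbf{rk}_\P$ is additive on finite direct sums: if $M = M'\oplus M''$ in $\mathrm{Proj}_\k(\Q)$ then $M(i) = M'(i)\oplus M''(i)$ as $e_iRe_i$-modules, and since $e_iRe_i$ is a free associative $\k$-algebra — hence has invariant basis number — the rank of a finite direct sum of free modules is the sum of the ranks, so $\mathbf{rk}_\P(M) = \mathbf{rk}_\P(M')+\mathbf{rk}_\P(M'')$. Thus $\bigoplus_i P_i^{\oplus n_i}\cong M\cong\bigoplus_i P_i^{\oplus m_i}$ yields $\sum_i n_i\,\mathbf{rk}_\P(P_i)=\mathbf{rk}_\P(M)=\sum_i m_i\,\mathbf{rk}_\P(P_i)$ in $\ZZ^{\oplus I}$, and since the vectors $\mathbf{rk}_\P(P_i)$ are unitriangular with respect to the partial order $\preceq$ (which has the descending chain condition by (A)) they are linearly independent, forcing $n_i=m_i$ for all $i$.

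For existence, since $M$ is finitely generated I would pick finitely many nonzero generators and, replacing each by its homogeneous components, assume each lies in a single $M(i_\ell)$, giving a surjection $F:=\bigoplus_{\ell=1}^{k}P_{i_\ell}\twoheadrightarrow M$ which splits because $M$ is projective. Now induct on $\#\mathrm{supp}(M)$, the empty case being trivial. Choose $j$ maximal in $\mathrm{supp}(M)$; since each $i_\ell$ lies in $\mathrm{supp}(M)$, the vertex $j$ is also maximal in $\mathrm{supp}(F)$, so there are no nontrivial paths into $j$ from the support of $F$, that is $e_jRe_\ell=0$ for every $\ell\in\mathrm{supp}(F)$ with $\ell\neq j$. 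Consequently $e_jF$ is a finite free $e_jRe_j$-module, the restriction map $\rho\colon\Hom_R(F,P_j)\to\Hom_{e_jRe_j}(e_jF,e_jRe_j)$ is surjective (on the $j$-isotypic summand it is the canonical identification $\Hom_R(P_j,P_j)=e_jRe_j=\Hom_{e_jRe_j}(e_jRe_j,e_jRe_j)$, and the remaining summands of $\Hom_R(F,P_j)$ map to zero), and hence — $M$ being a direct summand of $F$ — the restriction map $\Hom_R(M,P_j)\to\Hom_{e_jRe_j}(e_jM,e_jRe_j)$ is surjective too. Since $e_jM=M(j)$ is a \emph{free} $e_jRe_j$-module of some finite rank $n_j$, lift a free basis to $\phi\colon P_j^{\oplus n_j}\to M$ and lift the corresponding dual basis to $\psi\colon M\to P_j^{\oplus n_j}$ with $\psi\circ\phi=\mathrm{id}$; then $\phi$ is a split monomorphism, $M\cong P_j^{\oplus n_j}\oplus M'$ with $M':=\mathrm{coker}\,\phi$, and $M'$ is again finitely generated projective with $e_jM'=0$, hence with $\mathrm{supp}(M')\subseteq\mathrm{supp}(M)\setminus\{j\}$; apply the inductive hypothesis to $M'$.

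The main obstacle is this split-monomorphism step: one must recognize that at a maximal vertex $j$ of the support, no paths enter $j$ from the rest of the support, so that both $e_jM$ and $\Hom_R(M,P_j)$ ``see only'' the free $e_jRe_j$-module structure, which is exactly what lets the dual basis be lifted to a retraction $\psi$. Everything else — the additivity and linear independence of the rank vectors, the reduction to a direct summand of a finite sum of the $P_i$, and the support bookkeeping in the induction — is routine, given Lemma \ref{lem:KS} and the structure of the corner algebras $e_iRe_i$.
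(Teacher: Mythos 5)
Your proof is correct, and the existence half takes a genuinely different route from the paper's. The paper reduces to the corner ring $eRe$ with $e=e_{\preceq J}$, $J=\mathrm{supp}(M)$, invokes Cartan--Eilenberg/Kaplansky to realize $M$ as a finitely generated left ideal of that hereditary ring, and then applies Lemma \ref{lem:KS}(iv), whose proof is itself a ``peel off a maximal vertex'' induction, but carried out inside the ideal using the intersection properties from Lemma \ref{lem:KS}(ii)--(iii) and Lemma \ref{prec}. You instead work directly with the module: split the surjection $F=\bigoplus_\ell P_{i_\ell}\twoheadrightarrow M$, note that at a maximal $j\in\mathrm{supp}(M)$ one has $e_jP_{i_\ell}=e_jRe_{i_\ell}=0$ for $i_\ell\neq j$, and lift a free basis and dual basis of $e_jM$ to a split embedding $P_j^{\oplus n_j}\hookrightarrow M$. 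This avoids the external citations and the left-ideal combinatorics; the price is the surjectivity of the restriction map $\Hom_R(M,P_j)\to\Hom_{e_jRe_j}(e_jM,e_jRe_j)$, which you justify correctly by passing through the summand inclusion $M\subseteq F$. The supporting steps check out: $e_jF$ is finite free over $e_jRe_j$, so $e_jM$ is finitely generated projective and hence free of finite rank (Cohn); $e_j(\operatorname{im}\phi)=e_jM$ gives $e_jM'=0$; and the induction on $\#\mathrm{supp}$ closes. Your uniqueness argument is the paper's, with the linear independence of the $\mathbf{rk}_\P(P_i)$ spelled out via unitriangularity.

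One caution on that uniqueness step (it applies equally to the paper's version, so it is not a defect of your write-up relative to the source): the additivity-plus-unitriangularity argument presupposes $r_j(P_i)<\infty$ for all $j\preceq i$. At non-maximal $j$ this can fail: for the quiver $2\xrightarrow{\ \alpha\ }1$ with two loops at $2$ and none at $1$, the space $e_1Re_2$ is spanned by the infinitely many paths $\alpha w$ while $e_1Re_1=\k$, so $r_1(P_2)=\infty$. Finiteness of the rank is automatic only at vertices maximal in the relevant support --- which is precisely where your existence argument uses it. If you want a uniqueness proof immune to this, run your top-down induction on both decompositions at once: for $j$ maximal in the union of the supports, $e_jM\cong(e_jRe_j)^{\oplus n_j}\cong(e_jRe_j)^{\oplus m_j}$ forces $n_j=m_j$ by invariant basis number, and the submodule $Re_jM$ is exactly the $P_j^{\oplus n_j}$-summand, so passing to $M/Re_jM$ cancels it intrinsically and the induction descends.
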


\begin{proof}
Since $M$ is finitely generated, $J=\mathrm{supp}(M)$ is finite.  
Letting $e=e_{\preceq J}$, we see that $M$ is a projective module of 
the hereditary ring $eRe$.  It follows from \cite[Theorem 5.3]{CE} or \cite{Kap} that 
$M$ is isomorphic to a finitely generated left ideal of $eRe$.  
Hence  Lemma \ref{lem:KS} yields a decomposition 
\[M\cong \bigoplus_{i\in I} P_i^{\oplus n_i}.\] 
It follows that 
\[\mathbf{rk}_\P(M) = \sum_{i \in I} n_i \cdot \mathbf{rk}_\P(P_i).\]
Since the vectors 
$\{\mathbf{rk}_\P(P_i)\}_{i\in I}$ form a basis 
for $\ZZ^{\oplus I}$, the decomposition must be unique. 
\end{proof}

\subsection{Assumptions (a)-(e)} \label{sec-assumptions-ae}

Let  $S_i$ be a simple module supported only at $i \in I$. Then we obtain from \eqref{std} the standard resolution 
\begin{equation*}
0 \lra P_i' \lra P_i \lra S_i \lra 0 ,
\end{equation*}
where $P'_i = 
\bigoplus_{j\in I} P_j^{\oplus n_j}$ 
for some integers $n_j$. 
Then clearly $n_j=0$ unless $j\preceq i$.   
In particular, if $i$ is a minimal vertex then 
$P'_i = P_i^{\oplus c_i}$ and hence 
\begin{equation}\label{res0}
(1-c_i) \, \cl{P}_i =  \cl{S}_i
\end{equation}
which is non-zero by assumption (B). 
Now if $i\in I$ is not minimal, then by assumption (A) the set $\{j \mid j\preceq i\}$ is finite.  
We may thus use \eqref{std} and \eqref{res0} inductively to write $\cl{P}_i$ as a linear combination 
\begin{equation}\label{class}
\cl{P}_i = \frac 1 {1-c_i } \, \cl{S}_i + \sum_{j\prec\, i} r_{ij} \, \cl{S}_j ,\qquad  r_{ij} \in \QQ .
\end{equation}

The following proposition makes it possible to apply the results in the general setting of the previous sections 
to the category $\R$ of finitely-presented quiver representations.

\begin{prop} \label{prop-ae}
The triple $(\R, \P, \A)$ satisfies the assumptions (a)-(e) in Section \ref{ass}. 

\end{prop}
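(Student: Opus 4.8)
The plan is to check the five conditions of Section~\ref{ass} in turn, leaving (d) and (e) for last as they carry the real content. Conditions (a) and (c) are quick: for (a), $\R$ is $\k$-linear as a full subcategory of $R$-Mod and is essentially small because each of its objects is a quotient of one of the set-many finitely generated projectives --- and it is abelian because $R$ is left coherent, which is precisely Lemma~\ref{lem:KS}(i) (finitely generated left ideals are finitely generated projective, hence finitely presented); for (c), Proposition~\ref{prop:KS} writes $P\cong\bigoplus_i P_i^{\oplus n_i}$, $Q\cong\bigoplus_i P_i^{\oplus m_i}$, $M\cong\bigoplus_i P_i^{\oplus l_i}$, so that $M\oplus P\cong M\oplus Q$ forces $l_i+n_i=l_i+m_i$ for all $i$, hence $n_i=m_i$ and $P\cong Q$.

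For (b), $\R$ has enough projectives straight from its definition. To get heredity I would fix $M\in\R$ with a presentation $P\xrightarrow{f}Q\xrightarrow{g}M\to 0$ ($P,Q\in\P$), note that $\ker g=\im f$ is finitely generated (a quotient of $P$) and sits inside the finitely generated projective $Q$, and then prove that any finitely generated submodule $K$ of a finitely generated projective is projective: embed $Q$ as a summand of $R^N$ and induct on $N$, the base case $N=1$ being Lemma~\ref{lem:KS}(i), and for $N>1$ projecting $K$ onto the first coordinate gives a finitely generated left ideal (projective by Lemma~\ref{lem:KS}(i), so the sequence splits) with kernel a finitely generated submodule of $R^{N-1}$. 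Then $0\to\ker g\to Q\to M\to 0$ is a length-$\le 1$ projective resolution in $\P$, so $\Ext^2_\R=0$.

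The heart of the matter is (d) and (e), for which I would first describe $K(\R)$. As $\R$ is hereditary with enough projectives, every object has a two-term projective resolution, and Schanuel's lemma (cf.\ Lemma~\ref{Cohn}) shows that $\cl M\mapsto\mathbf{rk}_\P(Q)-\mathbf{rk}_\P(P)$ (for a presentation $0\to P\to Q\to M\to 0$) is a well-defined homomorphism $K(\R)\to\ZZ^{\oplus I}$; since it carries the spanning family $\{\cl{P}_i\}$ onto the $\ZZ$-basis $\{\mathbf{rk}_\P(P_i)\}$ of $\ZZ^{\oplus I}$, it is an isomorphism and $\{\cl{P}_i\}_{i\in I}$ is a $\ZZ$-basis of $K(\R)$. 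Next, applying the standard presentation~\eqref{std} to $A\in\A=\mathrm{rep}_\k(\Q)$ (Lemma~\ref{lem-finite-sub}), whose two terms are now \emph{finite} direct sums of the $P_i$ (as $A$ is finite dimensional and $\Q$ locally finite), and reading off classes yields in $K(\R)$
\[
\cl A=\sum_{i\in I}(\mathbf{dim}\,A)_i\,\cl{P}_i-\sum_{\sigma\in\Omega}(\mathbf{dim}\,A)_{t(\sigma)}\,\cl{P}_{h(\sigma)}=\sum_{i\in I}(\mathbf{dim}\,A)_i\,\cl{S}_i,
\]
the second equality coming from the standard presentation of each $S_i$ (the computation before~\eqref{res0}). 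Condition (d) follows at once: $K(\R)$ is spanned by the $\cl{P}_i$, and~\eqref{class} --- available because hypothesis (B) gives $1-c_i\ne 0$ --- writes each $\cl{P}_i$, hence every class in $K(\R)$, as a $\QQ$-combination of classes $\cl{S}_j$ with $S_j\in\A$. For (e), if $\cl A=\cl B$ with $A,B\in\A$, then $\sum_i(\mathbf{dim}\,A)_i\cl{S}_i=\sum_i(\mathbf{dim}\,B)_i\cl{S}_i$; since~\eqref{class} expresses the $\ZZ$-basis $\{\cl{P}_i\}$ through the $\cl{S}_j$ by a transition triangular for $\preceq$ (each set $\{j:j\preceq i\}$ being finite by (A)) with nonzero diagonal $\tfrac{1}{1-c_i}$, the $\{\cl{S}_i\}$ form a $\QQ$-basis of $K(\R)\otimes\QQ$, so $\mathbf{dim}\,A=\mathbf{dim}\,B$; and then, writing $P\cong\bigoplus_i P_i^{\oplus n_i}$ and using $\Hom_R(P_i,A)\cong e_iA$, we get $|\Hom_R(P,A)|=q^{\sum_i n_i(\mathbf{dim}\,A)_i}=|\Hom_R(P,B)|$.

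The step I expect to fight with is the identification $K(\R)\cong\ZZ^{\oplus I}$, in particular the \emph{linear independence} of $\{\cl{P}_i\}$ (equivalently of the $\cl{S}_i$ over $\QQ$), which is what makes (e) go through. Spanning is cheap (two-term resolutions plus Proposition~\ref{prop:KS}); independence is the standard fact that $K$ of a hereditary category with enough projectives equals the split Grothendieck group of its projectives, but one has to be careful that the syzygies in presentations of objects of $\R$ stay finitely generated so the Schanuel argument takes place inside $\R$. After that, the remaining work is bookkeeping with~\eqref{std},~\eqref{res0} and~\eqref{class}, with hypothesis (B) intervening only through the non-vanishing of $1-c_i$.
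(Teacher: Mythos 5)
Your proof is correct and follows essentially the same route as the paper's (which is much terser): Proposition \ref{prop:KS} for (c), heredity of the path algebra for (b), and the expressions \eqref{std}, \eqref{res0}, \eqref{class} for (d) and (e). The one substantive addition is that you make explicit the linear independence of $\{\cl{P}_i\}$ in $K(\R)$ via the rank homomorphism and Schanuel's lemma, a step the paper leaves implicit in its one-line argument for (e) but which is genuinely needed there.
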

\begin{proof}
(a) It is clear.
(b) Since the path algebra $R$  is hereditary, the category $\R$  is hereditary as well. Furthermore, $\R$ has enough projectives  by definition.  
(c) It is clear that $\P= \mathrm{Proj}_\k(\Q)$, so this condition follows easily from 
Proposition \ref{prop:KS}.
(d) It follows from the expression \eqref{class}.
(e) If $\bc_\R(A)=\bc_\R(B)$ for  $A, B \in \A$, the standard resolution \eqref{std} tells us that $e_iA$ and $e_iB$ have the same number of elements for each $i \in I$. Then $| \Hom(P_i,A)|=|\Hom(P_i,B)|$ for each $i \in I$, and thus $| \Hom(P,A)|=|\Hom(P,B)|$ for $P \in \P$ by Proposition \ref{prop:KS}. 
\end{proof}

It is also clear that the subcategory $\A\subset \R$ satisfies the finite subobjects condition \eqref{sub}, since each object 
$A\in \A$ is a finite dimensional vector space by Lemma \ref{lem-finite-sub}. Thus $\A$ also satisfies conditions \eqref{sigma1} and \eqref{sigma2}.

\begin{remark}

The quiver $\L_1$ is called the {\em Jordan quiver}, and its path algebra is isomorphic to the polynomial algebra $\k[x]$. For each $\lambda \in \k$, there is the one-dimensional simple modules $S_{\lambda}$ over $\k[x]$ where $x$ acts as $\lambda$. Considering the standard resolution \eqref{std}, we see $\widehat{S}_{\lambda}=0$, and the assumptions (d) and (e) are not satisfied. Nonetheless, the Jordan quiver is related to classical examples of Hall algebras. One can find details, for example, in  \cite{Sch}.

\end{remark}

\subsection{Quantum generalized Kac-Moody algebras}

In this subsection we recall the basic definitions concerning quantum generalized Kac--Moody algebras. We keep the assumptions on the choice of $v$ as in Section \ref{sec-notations}.

Let $I$ be a countable index set, and fix a symmetric {\em Borcherds--Cartan} matrix $A=( a_{ij} )_{i,j \in I}$ whose entries $a_{ij}$, by definition, satisfy (i) $a_{ii} \in \{2,0,-2,-4,\ldots\}$ and (ii) $a_{ij}=a_{ji} \in \ZZ_{\leq 0}$ for all $i,j$. Put $I^{re}=\{i \in I\;|a_{ii}=2\}$ and
$I^{im}=I\backslash I^{re}$, and  assume that we are given a
collection of positive integers $\m=(m_i)_{i\in I}$, called the {\em charge} of $A$,  with $m_i=1$ whenever $i \in I^{re}$. We put
$$[n]=\frac{v^n-v^{-n}}{v-v^{-1}}, \qquad [n]!=[1][2] \cdots [n], \qquad
\left[\begin{matrix} n\\k\end{matrix}\right]=\frac{[n]!}{[n-k]![k]!}.$$

The {\it quantum generalized Kac--Moody algebra}
associated with $(A,\m)$ is defined to be the
(unital) $\CC$-algebra $\U_v$
 generated by the elements $K_i,K_i^{-1},
E_{ik}, F_{ik}$ for $i \in I$, $k=1, \ldots, m_i$, subject to the
following  set of relations: for $i,j \in I$, $k=1, \dots, m_i$ and $l=1, \dots, m_j$,

\begin{equation}\label{E:01}
K_iK_i^{-1}=K_i^{-1}K_i=1, \quad K_iK_j=K_jK_i, 
\end{equation}
\begin{equation}\label{E:02}
K_iE_{jl}K_i^{-1}=v^{a_{ij}}E_{jl}, \qquad K_iF_{jl}K_i^{-1}=
v^{-a_{ij}}F_{jl},
\end{equation}
\begin{equation}\label{E:03}
E_{ik}F_{jl} - F_{jl}E_{ik} = \delta_{lk}\delta_{ij}
\frac{K_i-K_i^{-1}}{v-v^{-1}},
\end{equation}
\begin{equation}\label{E:05}
E_{ik}E_{jl} - E_{jl}E_{ik}=F_{ik}F_{jl} - F_{jl}F_{ik} =0 \qquad 
\text{if} \
a_{ij}=0, \quad \text{ and}\end{equation}
\begin{align}\label{E:04}
&\sum_{n=0}^{1-a_{ij}}(-1)^n
\left[\begin{matrix} 1-a_{ij}\\n\end{matrix}\right] E_{ik}^{1-a_{ij}-n}
E_{jl}E^n_{ik}\\
&=\sum_{n=0}^{1-a_{ij}}(-1)^n
\left[\begin{matrix} 1-a_{ij}\\n\end{matrix}\right] F_{ik}^{1-a_{ij}-n}
F_{jl}F^n_{ik}=0 \qquad \text{if }  i \in I^{re} \text{ and } i \neq j. \nonumber
\end{align}
The algebra $\U_v$ is equipped with a Hopf algebra
structure as follows (see \cite{BKM, Ka95}):
$$\Delta(K_i)=K_i \otimes K_i,$$
$$\Delta(E_{ik})=E_{ik} \otimes K_i^{-1} + 1 \otimes E_{ik},
\qquad \Delta(F_{ik})
=F_{ik} \otimes 1 + K_i \otimes F_{ik},$$
$$\epsilon(K_i)=1, \quad \epsilon(E_{ik})=\epsilon(F_{ik})=0,$$
$$S(K_i)=K_i^{-1}, \quad S(E_{ik})=-E_{ik}K_i, \quad S(F_{ik})=
-K_i^{-1}F_{ik}.$$

We have an involution $\tau: \U_v \longrightarrow \U_v$  defined by 
 $E_{ik} \mapsto F_{ik}$, $F_{ik} \mapsto E_{ik}$  
 and $K_i \mapsto K_i^{-1}$  for $i\in I$ and $k=1, \ldots, m_i$.  
We denote by  $\U^0_v$ the subalgebra generated by $K_i^{\pm 1}$, and by $\U^+_v$ (resp. $\U^-_v$) the subalgebra generated by $E_{ik}$ (resp.  $F_{ik}$). Similarly, we define 
$\U^{\geq0}_v$ (resp.  $\U^{\leq0}_v$) to be the subalgebra
generated by $K_i^{\pm 1}$ and $E_{ik}$ (resp.~$K_i^{\pm 1}$ and $F_{ik}$) 
for $i \in I$ and $k=1, \ldots, m_i$. 
Then \[ \U^0_v\isom \CC[K_i^{\pm1}]_{i\in I},\]
and the  involution $\tau$  identifies 
$\U^+_v$ with $\U^-_v$.

The following result provides a triangular decomposition for $\U_v$.

\begin{prop}[\cite{BKM}] \label{tri} 
The multiplication maps 
\[  
\U^0_v \tensor_\CC 
\U^+_v \lra 
\U^{\geq0}_v, 
\qquad
\U^0_v \tensor_\CC 
\U^-_v \lra 
\U^{\leq0}_v, 
\]
and 
\[\U^+_v
\tensor_\CC  \U^0_v
\tensor_\CC \U^-_v
\lra \U_v\]
are isomorphisms of vector spaces.
\end{prop}

\subsection{Embedding of $\U_v^{\ge 0}$ into a Hall algebra}

Let $A=( a_{ij} )_{i,j \in I}$ be a symmetric {Borcherds--Cartan} matrix such that each row has only finitely many nonzero entries and $a_{ii} \neq 0$ for any $i \in I$. 
Fix a  locally
finite quiver $\mathcal Q$ associated to $A$ satisfying conditions (A) and (B): each vertex $i$ has $1-a_{ii}/2$ loops, and two distinct vertices $i$ and $j$  are connected with $-a_{ij}$ arrows for $i \neq j$. Then, since $a_{ii} \neq 0$ for any $i \in I$, the condition (B) is satisfied, and we can always choose an orientation for $\mathcal Q$ so that (A) is satisfied.

If $i \in I^{re}$, then
there exists a unique simple object $S_i \in \mathrm{Rep}_{\k}(\mathcal Q)$ supported at $i$. 
On the other hand, if $i \in I^{im}$
then the set of simple objects supported at $i$ 
is in bijection with $\k^{c_i}$: 
if $\sigma_1, \ldots, \sigma_{c_i}$ denote
the simple loops at $i$ then to $\underline{\lambda}=(\lambda_1, \ldots
,\lambda_{c_i}) \in \k^{c_i}$ corresponds the simple module
$S_{i,\underline{\lambda}}=(V_j,x_{\sigma} )_{j \in I, \sigma \in \Omega}$ with
$\mathrm{dim}_{\k} V_j=
\delta_{ij}$ and $x_{\sigma_k}=\lambda_k \cdot \mathrm{id}$ for $k=1, \ldots, c_i$.

Let us now assume that the charge $\m=(m_i)_{i\in I}$ satisfies
\[ m_i\le |\k^{c_i}|=q^{c_i}   \quad \text{ for each } i \in I.\]
We choose $\underline{\lambda}^{(l)} \in \k^{c_i}$ for $l=1, \ldots,
m_i$ in such a way that 
$\underline{\lambda}^{(l)} \neq \underline{\lambda}^{(l')}$ for $l\neq l'$. 
Then we set $S_{il} = S_{i,\underline{\lambda}^{(l)}}$ for 
$i\in I^{im}$ and $l=1, \dots, m_i$, and simply set $S_{i1}=S_i$ for $i \in I^{re}$. 
Since $S_{il}$ have  the same projectives in the standard resolution \eqref{std} for all $l=1, \dots, m_i$, they define a unique class in $K(\R)$. We will denote this unique class by $\cl{S}_{i}$ for any $i \in I$. 

The following lemma will be used in the proof of Theorem \ref{thm-KS}.

\begin{lem} \label{lem-iso-ka-z}
Let $\bar K (\A)$ denote the image of the map $K(\A) \rightarrow K(\R)$ defined in \eqref{eqn-kr-ka}.  Then the map $\mathbf{dim}:\bar{K}(\A) \rightarrow \ZZ^{\oplus I} $ is well-defined and is an isomorphism. That is,
\[ \ZZ^{\oplus I}  \simeq \bar{K}(\A) \hookrightarrow K(\R) .\]
\end{lem}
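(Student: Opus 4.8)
The plan is to identify $\bar K(\A)$ explicitly with the subgroup of $K(\R)$ generated by the classes $\cl S_i$, and to check that $\mathbf{dim}$ sends this generating set to the standard basis of $\ZZ^{\oplus I}$. First I would recall that for $A\in\A=\mathrm{rep}_\k(\Q)$ one has $\Hom_\R(P_i,A)\cong e_iA$, so that $\mathbf{dim}\,A=(\dim_\k e_iA)_{i\in I}$ is the ordinary dimension vector. Since $\dim_\k e_i(-)$ is additive on short exact sequences in $\A$, this yields a group homomorphism $\mathbf{dim}\colon K(\A)\to\ZZ^{\oplus I}$, $\bc_\A(A)\mapsto\mathbf{dim}\,A$.

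Next I would show that $\mathbf{dim}$ descends along the surjection $K(\A)\twoheadrightarrow\bar K(\A)$, i.e.\ that $\ker\big(K(\A)\to K(\R)\big)\subseteq\ker(\mathbf{dim})$. Every class in $K(\A)$ is of the form $\bc_\A(A)-\bc_\A(B)$ for genuine objects $A,B\in\A$ (group the positive and negative parts of a $\ZZ$-combination of simples and replace sums of classes by the class of a direct sum), so an element of the kernel produces a pair with $\cl A=\cl B$ in $K(\R)$; the argument used for condition (e) in the proof of Proposition \ref{prop-ae} — reading $\dim_\k e_iA$ off the standard resolution \eqref{std}, or equivalently invoking assumption (e) together with $|\Hom_\R(P_i,A)|=|e_iA|$ — then shows $\mathbf{dim}\,A=\mathbf{dim}\,B$. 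Hence $\mathbf{dim}$ is well defined on $\bar K(\A)$.

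Finally I would pin down generators and values. Because every object of $\mathrm{rep}_\k(\Q)$ has finite length, $K(\A)$ is generated by the classes of simple modules; and all simple modules supported at a fixed vertex $i$ share the same standard resolution \eqref{std} — its terms depend only on the quiver, not on the loop eigenvalues $\underline{\lambda}$ — hence the same image $\cl S_i\in K(\R)$. Therefore $\bar K(\A)=\sum_{i\in I}\ZZ\,\cl S_i$. Writing $\mathbf{e}_i\in\ZZ^{\oplus I}$ for the $i$-th standard basis vector, one computes $\mathbf{dim}(\cl S_i)=\mathbf{dim}\,S_{i,\underline{\lambda}}=\mathbf{e}_i$. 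Thus $\mathbf{dim}$ carries the generating set $\{\cl S_i\}_{i\in I}$ of $\bar K(\A)$ bijectively onto the basis $\{\mathbf{e}_i\}_{i\in I}$ of $\ZZ^{\oplus I}$: surjectivity is immediate, and if $\sum_i n_i\,\cl S_i$ lies in the kernel then $\sum_i n_i\,\mathbf{e}_i=0$ forces every $n_i=0$, giving injectivity. Composing the resulting isomorphism with the inclusion $\bar K(\A)\hookrightarrow K(\R)$ yields the chain $\ZZ^{\oplus I}\simeq\bar K(\A)\hookrightarrow K(\R)$.

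There is no deep obstacle here — the lemma is essentially bookkeeping — and the only genuine use of the hypotheses is in the well-definedness step, which rests on condition (e) (equivalently, on reading dimension vectors off the standard presentation \eqref{std}). The one point worth flagging is that linear independence of $\{\cl S_i\}_{i\in I}$ in $K(\R)$ is \emph{not} assumed but is recovered a posteriori, as a consequence of the injectivity of $\mathbf{dim}$ on $\bar K(\A)$.
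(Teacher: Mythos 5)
Your proof is correct and follows essentially the same route as the paper's (well-definedness from condition (e), surjectivity and injectivity from the fact that the classes $\cl{S}_i$ generate $\bar K(\A)$ and map to the standard basis of $\ZZ^{\oplus I}$); you simply spell out the details the paper leaves implicit. One small imprecision: at an imaginary vertex with $c_i\ge 2$ loops the free algebra $e_iRe_i$ also has finite-dimensional simple modules of dimension $d>1$, whose standard resolutions are $d$-fold multiples of that of a one-dimensional simple, so their classes are $d\,\cl{S}_i$ rather than $\cl{S}_i$ itself --- but this does not affect the conclusion that $\bar K(\A)=\sum_{i}\ZZ\,\cl{S}_i$.
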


\begin{proof}
The map $\mathbf{dim}:\bar{K}(\A) \rightarrow \ZZ^{\oplus I} $ is well-defined  by condition (e) which is verified in Proposition \ref{prop-ae}. Clearly, the map is surjective. As noted already, there is a unique class $\cl{S}_{i}$ which represents all simple modules $S_{il}$ in $\A$ for each $i \in I$. Thus  the map is injective. 
\end{proof}

The following theorem, due to Kang and Schiffmann, is an extension of well-known results of Ringel \cite{Ri} and Green \cite{Gr} from the case of a finite quiver without loops to a locally finite quiver with loops:

\begin{thm}[\cite{KS}] \label{thm-KS}
Suppose $\k=\F$ is such that $|\k^{c_i}| \geq m_i$ for all $i\in I$. 
Then there are injective homomorphisms of algebras 
\[
\U_v^+ 
\lhook \joinrel \longrightarrow 
\H_v (\A) \qquad
\text{ and }
\qquad
\U_v^{\geq 0} 
\lhook \joinrel \longrightarrow 
\widetilde{\H_v}(\R)
\]
defined on generators by $K_{i}^{\pm 1} \mapsto K_{\cl{S}_i}^{\pm 1}$  for $i \in I$, 
$$E_{il} \mapsto
\gen{S_{il}} \cdot (q-1)^{-1} \quad \text{ for }i \in I, $$
where $S_{il}$ and $\cl{S}_i$ are defined right before Lemma \ref{lem-iso-ka-z}.
\end{thm}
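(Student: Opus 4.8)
The plan is to deduce both embeddings from the theorem of Kang and Schiffmann \cite{KS}, transported into the present framework by the identifications made in this section. First I would use Lemma \ref{lem-finite-sub} to identify $\A = \mathrm{rep}_\k(\mathcal Q)$, so that $\H_v(\A)$ is exactly the twisted Ringel--Hall algebra of finite-dimensional representations of $\mathcal Q$ that occurs in \cite{KS}; one only needs to match conventions, i.e.\ to confirm that our twist by the Euler form \eqref{eqn-ttw} and our normalization $v^2=q$ from Section \ref{sec-notations} agree with theirs. Under the hypothesis $|\k^{c_i}|\ge m_i$ the charge $\m$ is realized by the pairwise distinct simples $S_{il}$ chosen before Lemma \ref{lem-iso-ka-z}, and \cite{KS} then supplies the injective algebra homomorphism $\U_v^+ \hookrightarrow \H_v(\A)$, $E_{il}\mapsto \gen{S_{il}}(q-1)^{-1}$. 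In particular the Serre relations \eqref{E:04} for $i\in I^{re}$ and the relations \eqref{E:05} for $a_{ij}=0$ are then automatic, since they involve only the $E$'s.

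For the second embedding I would extend the first by the torus part. Define $\Xi^0\colon \U_v^0 = \CC[K_i^{\pm1}]_{i\in I}\to \CC[K(\R)]$ by $K_i^{\pm1}\mapsto K_{\cl{S}_i}^{\pm1}$; together with $E_{il}\mapsto \gen{S_{il}}(q-1)^{-1}$ this respects \eqref{E:01} (the $K$'s commute in $\widetilde{\H_v}(\R)$) and the first half of \eqref{E:02}, that is $K_i \ast \gen{S_{jl}} \ast K_i^{-1} = v^{a_{ij}}\gen{S_{jl}}$, provided $(\cl{S}_i,\cl{S}_j)=a_{ij}$ in $K(\R)$, by the defining relation \eqref{extend2}. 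I would check this identity by a direct computation on the simples of $\A=\mathrm{rep}_\k(\mathcal Q)$: each $S_{i,\underline\lambda}$ is one-dimensional, so $\End_\A(S_{i,\underline\lambda})=\k$, while $\dim_\k\Ext^1_\R(S_i,S_i)=c_i=1-a_{ii}/2$ is the number of loops at $i$ and, for $i\ne j$, $\dim_\k\Ext^1_\R(S_i,S_j)+\dim_\k\Ext^1_\R(S_j,S_i)=-a_{ij}$ counts the arrows between $i$ and $j$; hence $\langle S_i,S_i\rangle = a_{ii}/2$ and $\langle S_i,S_j\rangle+\langle S_j,S_i\rangle = a_{ij}$, so $(\cl{S}_i,\cl{S}_j)=a_{ij}$. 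By Corollary \ref{cor-fac-th} and the definition \eqref{Euler2} of the generalized Euler form, these values are the same when computed for the classes $\cl{S}_i\in K(\R)$ using any representatives in $\A$. The half of \eqref{E:02} involving the $F$'s is not needed, since $\U_v^{\ge 0}$ is generated by the $K_i^{\pm1}$ and the $E_{il}$.

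For injectivity of $\U_v^{\ge 0}\hookrightarrow \widetilde{\H_v}(\R)$ I would combine the triangular decomposition $\U_v^{\ge 0}\cong \U_v^0\otimes_\CC \U_v^+$ from Proposition \ref{tri} with the vector-space identification $\widetilde{\H_v}(\R)\cong \CC[K(\R)]\otimes_\CC \H_v(\A)$ from Section \ref{ss:Extend-1}. After rearranging with the cross relations just verified, the map becomes the tensor product of $\Xi^0$ and the embedding $\U_v^+\hookrightarrow \H_v(\A)$. The latter is injective by \cite{KS}, and $\Xi^0$ is injective because, by Lemma \ref{lem-iso-ka-z}, the classes $\cl{S}_i$ are carried by $\mathbf{dim}$ to the standard basis vectors of $\ZZ^{\oplus I}\cong \bar K(\A)\subset K(\R)$ and hence are $\ZZ$-linearly independent, so $K_i^{\pm1}\mapsto K_{\cl{S}_i}^{\pm1}$ extends to an injection of group algebras. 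Being a tensor product of injections compatible with both triangular decompositions, the whole map is injective.

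I expect the only genuine difficulty to be bookkeeping: reconciling the precise normalizations in \cite{KS} (the factor $(q-1)^{-1}$, the sign and placement of the Euler-form twist, $v$ versus $v^{-1}$) with the conventions fixed in Sections \ref{ss:Extend}--\ref{ss:Extend-1}, and confirming that the hypotheses $a_{ii}\ne 0$ and $|\k^{c_i}|\ge m_i$ are exactly what is needed so that condition (B), hence Proposition \ref{prop-ae} and thus the generalized Euler form of Section \ref{ss:Euler}, is available and the charge can be realized by distinct simples; everything else is imported from \cite{KS}.
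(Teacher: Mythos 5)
Your proposal is correct and follows essentially the same route as the paper: both import the hard content (the embedding of $\U_v^+$ and the verification of the Serre-type relations) from \cite{KS}, and both reduce the passage to $\widetilde{\H_v}(\R)$ to the fact that the torus is now indexed by $K(\R)$ rather than $\ZZ^{\oplus I}$, which is exactly what Lemma \ref{lem-iso-ka-z} supplies. The paper states this more tersely as an embedding $\widetilde{\H_{v,\k}}(\Q)\hookrightarrow\widetilde{\H_v}(\R)$, whereas you re-verify the cross relations $(\cl{S}_i,\cl{S}_j)=a_{ij}$ by hand; that computation is correct and is a harmless expansion of the same argument.
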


\begin{proof}
Let $\widetilde{\H_{v,\k}}(\Q)$ be the extended, twisted Hall algebra defined in \cite{KS}. It is shown in \cite{KS} that  
\[
\U_v^+ 
\lhook \joinrel \longrightarrow 
\H_v (\A) \qquad
\text{ and }
\qquad
\U_v^{\geq 0} 
\lhook \joinrel \longrightarrow 
\widetilde{\H_{v,\k}}(\Q) .
\]
Thus we have only to check that there exists an injective algebra 
homomorphism $\widetilde{\H_{v,\k}}(\Q) \hookrightarrow \widetilde{\H_v}(\R)$. The only difference between the two algebras is that, while $\widetilde{\H_v}(\R)$ is extended by $K(\R)$, the algebra $\widetilde{\H_{v,\k}}(\Q)$ is extended by $\ZZ^{\oplus I}$. Thus the embedding of $\widetilde{\H_{v,\k}}(\Q)$ into $\widetilde{\H_v}(\R)$ follows from Lemma \ref{lem-iso-ka-z}.
\end{proof}

\subsection{Embedding of $\U_v$ into $\DH_\rd(\R)$}

We keep the notations in the previous subsection. In particular, the matrix $A$ is a 
Borcherds--Cartan Matrix and $\mathcal Q$ is a fixed quiver corresponding to $A$. Suppose that $\U_v$ is the quantum group of the  generalized Kac-Moody algebra associated with $A$.    

Now we state and prove the main result of this paper.

\begin{thm} \label{thm-main-q-group}
There is an injective homomorphism of algebra
\[\Xi \colon \U_v\longinto\DH_{\rd}(\R),\]
defined on generators by
\begin{align*}
\Xi (E_{il}) &=(q-1)^{-1}\cdot E_{S_{il}},  &\Xi(F_{il}) &= (-t)\cdot (q-1)^{-1}\cdot {F_{S_{il}}},
\\
\Xi(K_i) &= K_{\cl{S}_{i}}, & \Xi(K_i^{-1}) &= K_{\cl{S}_{i}}^\dag,
\end{align*}
where $l=1, 2, \dots , m_i$ and  $\cl{S}_{i}$ is the unique class representing $S_{il}$ in $K(\R)$ for each $i \in I$. 
\end{thm}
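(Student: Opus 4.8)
The plan is to first produce $\Xi$ from the universal property of $\U_v$ by verifying that the proposed images of the generators satisfy the defining relations \eqref{E:01}--\eqref{E:04}, and then to deduce injectivity by matching the triangular decomposition of $\U_v$ (Proposition \ref{tri}) with that of $\DH_\rd(\R)$ (Proposition \ref{tri2}). The useful observation is that almost all the relations are ``one-signed''. Composing the Kang--Schiffmann embedding $\U_v^{\ge0}\hookrightarrow\widetilde{\H_v}(\R)$ of Theorem \ref{thm-KS} with the algebra isomorphism $I_+\colon\widetilde{\H_v}(\R)\xrightarrow{\sim}\DH^+(\R)$ of Proposition \ref{embed} and the quotient map $\DH(\R)\to\DH_\rd(\R)$ gives an algebra homomorphism $\U_v^{\ge0}\to\DH_\rd(\R)$ agreeing with $\Xi$ on the generators $E_{il}$, $K_i^{\pm1}$; hence \eqref{E:01} for the $K_i$, and \eqref{E:02}, \eqref{E:05}, \eqref{E:04} for the $E_{il}$ hold automatically, while $K_i\ostar K_i^{-1}=1$ is simply the reduction relation $K_{\cl S_i}\ostar K_{\cl S_i}^\dag=1$. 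Applying the involution $\tau$ of $\U_v$ and the shift involution $\dag$ of $\DH_\rd(\R)$ (an algebra automorphism interchanging $\DH^+(\R)$ and $\DH^-(\R)$, with $E_A\mapsto F_A$ and $K_\alpha\mapsto K_\alpha^\dag$), and then rescaling the $F$-generators by the common nonzero scalar $-t$ — harmless, since the surviving relations are homogeneous in the $F$'s — produces an algebra homomorphism $\U_v^{\le0}\to\DH_\rd(\R)$ agreeing with $\Xi$ on the $F_{il}$ and $K_i$; this disposes of \eqref{E:02}, \eqref{E:05} and \eqref{E:04} for the $F_{il}$.

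The only relation mixing the two signs is the commutator relation \eqref{E:03}, which I would check by hand using the results of Section \ref{subsec-comm-rels}. If $(i,k)\ne(j,l)$ the simple modules $S_{ik}$ and $S_{jl}$ are non-isomorphic, so comparing supports (and applying Schur's lemma when $i=j$) gives $\Hom_\R(S_{ik},S_{jl})=0=\Hom_\R(S_{jl},S_{ik})$, whence $[E_{S_{ik}},F_{S_{jl}}]=0$ by Lemma \ref{commute2}; this matches the right-hand side of \eqref{E:03}, which vanishes since $\delta_{ij}\delta_{kl}=0$. If $(i,k)=(j,l)$, put $S:=S_{ik}$; since $S$ is one-dimensional, $\End_\A(S)=\k$, and Lemma \ref{EF-rel} gives $[E_S,F_S]=(q-1)\bigl(K_{\cl S}^\dag-K_{\cl S}\bigr)$. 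A short computation then yields
\[
[\Xi(E_{ik}),\Xi(F_{ik})]=\frac{-t}{(q-1)^2}\,[E_S,F_S]=\frac{t}{q-1}\bigl(K_{\cl S}-K_{\cl S}^\dag\bigr),
\]
and this equals $\Xi\bigl((K_i-K_i^{-1})/(v-v^{-1})\bigr)=\bigl(K_{\cl S}-K_{\cl S}^\dag\bigr)/(v-v^{-1})$ precisely when $t=(q-1)/(v-v^{-1})=v$. Thus the normalization $-t=-v$ is forced, and $\Xi$ extends to an algebra homomorphism. (Here one also needs that the generalized Euler form on $K(\R)$ restricts to the usual Euler form on $\A=\mathrm{rep}_\k(\Q)$ and that $(\cl S_i,\cl S_j)=a_{ij}$, both of which are immediate from the way $\Q$ is built from $A$.)

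For injectivity I would use that $\Xi$ is triangular: it carries $\U_v^+$ into the subalgebra spanned by the $E_A$, $\U_v^0$ into $\CC[K(\R)]$, and $\U_v^-$ into the subalgebra spanned by the $F_B$ — exactly the three tensor factors of $\H(\A)\otimes_\CC\CC[K(\R)]\otimes_\CC\H(\A)\xrightarrow{\sim}\DH_\rd(\R)$ (Proposition \ref{tri2}). On $\U_v^+$ the map agrees, up to nonzero scalars on generators, with the embedding $\U_v^+\hookrightarrow\H_v(\A)$ of Theorem \ref{thm-KS}, hence is injective; on $\U_v^-$ the same holds after applying $\tau$ and $\dag$; and on $\U_v^0\cong\CC[K_i^{\pm1}]_{i\in I}$ the assignment $K_i\mapsto K_{\cl S_i}$ is injective because the classes $\cl S_i$ span a free abelian subgroup of $K(\R)$ of rank $|I|$ by Lemma \ref{lem-iso-ka-z}. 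Since a tensor product over $\CC$ of injective linear maps is injective, and via Propositions \ref{tri} and \ref{tri2} the map $\Xi$ is identified with $\Xi|_{\U_v^+}\otimes\Xi|_{\U_v^0}\otimes\Xi|_{\U_v^-}$, it follows that $\Xi$ is injective. The step I expect to be the most delicate is the verification of \eqref{E:03}: it is there that the scalars must be tracked carefully — pinning $-t$ to $-v$ — and the symmetric Euler form identified with the Borcherds--Cartan matrix; the remaining relations, as well as injectivity, follow formally from Theorem \ref{thm-KS}, Proposition \ref{embed}, the shift involution $\dag$, and the triangular decompositions.
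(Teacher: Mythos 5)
Your proposal is correct and follows essentially the same route as the paper: the relations inside $\U_v^{\ge 0}$ and $\U_v^{\le 0}$ are imported from Theorem \ref{thm-KS} (via $I_\pm$ and the involution $\dag$), the cross relation \eqref{E:03} is checked with Lemmas \ref{commute2} and \ref{EF-rel}, and injectivity follows from matching the triangular decompositions of Propositions \ref{tri} and \ref{tri2}. Your explicit computation pinning $t=(q-1)/(v-v^{-1})=v$ is a worthwhile addition, since the scalar $t$ in the statement is otherwise left undefined in the paper.
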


\begin{proof}
We have 
a commutative diagram of linear maps
\[\begin{tikzcd}
\U^+_v\tensor\U^0_v\tensor \U^-_v \ar[rr, "\ \Theta" , hook]\ar[d]& &
{\H_v}(\A)
\tensor\CC[K(\R)]\tensor {\H_v}(\A) \ar[d]\\
\U_v 
 \ar[rr , "\Xi \quad " , hook] && \DH_\rd(\R) 
\end{tikzcd} \]
where the vertical arrows are the isomorphisms described in Propositions \ref{tri2} and \ref{tri}, respectively, and the homomorphism $\Theta$ is constructed out of the homomorphisms of 
Theorem \ref{thm-KS}. The map  $\Xi$ is a well-defined algebra homomorphism by Theorem \ref{thm-KS} and by Lemmas \ref{commute2}, \ref{EF-rel} and Corollary  \ref{cor-associative},  which show that the generators satisfy the defining relations of the quantum group. It is clear from this diagram that $\Xi$ is injective.
\end{proof}

\end{document}